\def\l@subsection{\@tocline{2}{0pt}{2.5pc}{5pc}{}}
\def\l@subsubsection{\@tocline{2}{0pt}{5pc}{5pc}{}}
\def\l@chapter{\@tocline{0}{8pt plus1pt}{0pt}{}{\bfseries}}
\def\bmat{\begin{pmatrix}}
\def\emat{\end{pmatrix}}
\def\hpen{{\hat\Lambda}}
\newcommand{\transpose}{{\operatorname{T}}}
\newcommand{\wt}[1]{\widetilde{#1}}
\def\id{\operatorname{id}}
\def\dxd{{d \times d}}
\def\MAT{M}
\newcommand{\dom}{\operatorname{dom}}
\newcommand{\sa}{{\operatorname{sa}}}
\def\tr{\operatorname{tr}}
\newcommand{\df}[1]{{\bf{#1}}{\index{#1}}}
\newcommand{\domA}[1]{{\dom_\cA({#1}) }   }
\def\pen{{\Lambda_\MAT}}
\def\Lam{{\Lambda}}
\def\ot{\otimes}
\newcommand{\ev}{\operatorname{ev}}
\newcommand{\Sys}{\operatorname{Sys}}
\def\IA{{1_\cA}}
\def\kac{ \bmat I_k \ot \IA\\ 0 \emat}    
\def\kar{\bmat I_k \ot \IA & 0 \emat }    
\def\ra{{\rho_\cA}}
\newcommand{\hM}{\hat{M}}
\newcommand{\hXi}{\hat{\Xi}}
\newcommand{\cA}{{\mathcal A}}
\def\tA{\widetilde{A}}
\def\hA{{\hat{A}}}
\def\tA{{\widetilde A}}
\def\cAg{{\cA^g}}
\def\cB{ {\mathcal B} }
\def\hB{{\hat B}}
\def\cC{ {\mathcal C} }
\def\bbC{{\mathbb C}}
\def\tC{{\widetilde C}}
\newcommand{\CC}{{\mathbb C}}
\def\hC{{\hat C}}
\def\cE{ {\mathcal E} }
\def\bH{{\mathbb H} }
\def\tJ{\widetilde{J}}
\def\hJ{{\hat J}}
\def\bbK{\mathbb{K}}
\def\tM{\widetilde{M}}
\def\bbM{{\mathbb M}}
\def\NN{{\mathbb N}}
\newcommand{\cQ}{{\mathcal Q}}
\def\bbR{{\mathbb R}}
\def\RR{{\mathbb R}}
\newcommand{\fr}{\mathfrak{r}}
\newcommand{\ur}{\underline{r}}
\def\cS{{\mathcal S} }
\def\cW{{\mathcal W}}
\newcommand\WOR{{\mathcal W}}   
\newcommand{\bX}{\mathbb{X}}
\def\moverlay{\mathpalette\mov@rlay}
\def\mov@rlay#1#2{\leavevmode\vtop{%
\baselineskip\z@skip \lineskiplimit-\maxdimen
\ialign{\hfil$#1##$\hfil\cr#2\crcr}}}
\def\plangle{\moverlay{(\cr<}}
\def\prangle{\moverlay{)\cr>}}
\def\Klx{{\CC\plangle x_1,\dots,x_g\prangle}}
\newcommand{\rf}{\mathfrak}
\newcommand{\fgg}{\chi}
\newcommand{\dr}{\mathbbm}
\newcommand{\br}{\mathbbm{r}}
\DeclareRobustCommand\widecheck[1]{{\mathpalette\@widecheck{#1}}}
\def\@widecheck#1#2{%
    \setbox\z@\hbox{\m@th$#1#2$}%
    \setbox\tw@\hbox{\m@th$#1%
       \widehat{%
          \vrule\@width\z@\@height\ht\z@
          \vrule\@height\z@\@width\wd\z@}$}%
    \dp\tw@-\ht\z@
    \@tempdima\ht\z@ \advance\@tempdima2\ht\tw@ \divide\@tempdima\thr@@
    \setbox\tw@\hbox{%
       \raise\@tempdima\hbox{\scalebox{1}[-1]{\lower\@tempdima\box
\tw@}}}%
    {\ooalign{\box\tw@ \cr \box\z@}}}
\renewcommand{\check}{\widecheck}
\renewcommand{\hat}{\widehat}
\newtheorem{thm}{Theorem}[section]
\newtheorem{lem}[thm]{Lemma}
\newtheorem{prop}[thm]{Proposition}
\newtheorem{prob}[thm]{Problem}
\theoremstyle{definition}
\newtheorem{definition}[thm]{Definition}
\theoremstyle{remark}
\newtheorem{rem}[thm]{Remark}
\newtheorem{algorithm}[thm]{Algorithm}
\numberwithin{equation}{section}
\newtheorem{example}[thm]{Example}
\newcounter{Inc}
\title[Applications of Realizations to Free Probability]{Applications of Realizations (aka Linearizations) to Free Probability}
\author{J. William Helton}
\address{Mathematics Department \\ University of California at San Diego 92093}
\email{helton@math.ucsd.edu}
\author{Tobias Mai}
\address{Saarland University, Faculty of Mathematics, D-66123 Saarbruecken, Germany}
\email{mai@math.uni-sb.de}
\author{Roland Speicher}
\address{Saarland University, Faculty of Mathematics, D-66123 Saarbruecken, Germany}
\email{speicher@math.uni-sb.de}
\keywords{free probability, non-commutative rational functions, descriptor realizations, linearizations}
\thanks{J.W.Helton's work was partly supported by
the U.S. National Science Foundation
grants DMS 1201498 and DMS-1500835.
\\
The work of T. Mai and R. Speicher was supported by funds from the DFG (SP-419-8/1) and the ERC Advanced Grant {\it Non-commutative Distributions in Free Probability}.
\\
The authors are grateful for insights of 
Greg Anderson and Zonglin Jiang 
into the theory of P.M. Cohn, 
of Adrian Ionna into von Neumann algebras.
Also discussions with Victor Vinnikov on 
noncommutative rational functions were valuable.
Thanks also to Fran\c{c}ois Lofficial for making us aware of some typos.
Finally, we gratefully acknowledge the work of two anonymous referees; their valuable comments have led to a significant improvement of the exposition.}
\begin{document}

\maketitle

\centerline{\today}



\noindent{\bf Abstract.}

We show how the combination of new ``linearization'' ideas in free probability theory with the powerful ``realization'' machinery -- developed over the last 50 years in fields including systems engineering and automata theory -- allows solving the problem of determining the eigenvalue distribution (or even the Brown measure, in the non-selfadjoint case) of noncommutative rational functions of random matrices when their size tends to infinity. Along the way we extend evaluations of noncommutative rational expressions from matrices to stably finite algebras, e.g. type II$_1$ von Neumann algebras, with a precise control of the domains of the rational expressions.

The paper provides sufficient background information, with the intention that it should be accessible both to functional analysts and to algebraists.

\newpage

\section{Introduction}

In free probability theory \cite{VDN,HP,NS, MS} the calculation of the distribution of polynomials $p(X_1,\dots,X_g)$ of $g$ free random variables $X_1,\dots,X_g$ has made essential progress recently \cite{BMS2013}. This relies crucially on the so-called linearization trick: a polynomial like 
$p(X_1,X_2)=X_1X_2+X_2X_1$
can also be written in the form
\begin{equation}\label{eq:p}
X_1X_2+X_2X_1 = -uQ^{-1}v =
 - \begin{pmatrix} 0 & 0 & 0 & 1 \end{pmatrix} \begin{pmatrix} 0 & X_1 & X_2 & -1\\ X_1 & 0 & -1 & 0\\ X_2 & -1 & 0 & 0\\ -1 & 0 & 0 & 0\end{pmatrix}^{-1} \begin{pmatrix} 0 \\ 0 \\ 0 \\ 1 \end{pmatrix},
\end{equation}
resulting in the fact that the \emph{linearization of $p$}\index{linearization}, i.e., the matrix of polynomials
\begin{equation}\label{eq:phat}
\hat p=
\begin{pmatrix}
0&u\\v&Q
\end{pmatrix}
=
\begin{pmatrix}
0 & 0 & 0 & 0 & 1\\
0 & 0 & X_1 & X_2 & -1\\
0 & X_1 & 0 & -1 & 0\\
0 & X_2 & -1 & 0 & 0\\
1 & -1 & 0 & 0 & 0
\end{pmatrix},
\end{equation}
contains all relevant information about our polynomial $p$. This $\hat p$ is now a matrix-valued polynomial in the variables, but has the decisive advantage that all entries have degree at most 1. This means that we can write $\hat p$ as the sum of a matrix in $X_1$ and a matrix in $X_2$. For dealing with sums of (operator-valued) free variables, however, one has a quite well-developed analytic machinery in free probability theory. The application of this machinery gives then results as in Fig. \ref{fig:anticommutator-ex2} in Sect. \ref{subsec:examples}.
One should note that those results about the distribution of polynomials in free random variables yield also results about the asymptotic eigenvalue distribution of such polynomials in random matrices, by the fundamental observation of Voiculescu that in many cases random matrices become asymptotically free when the size of the matrices goes to infinity.

The crucial ingredient in the above, which allowed transforming an unmanageable polynomial into something linear, and thus into something in the realm of powerful free probability techniques, was the mentioned linearization trick. This trick is purely algebraic and does not rely on the freeness of the variables involved. In the free probability community this linearization was introduced and used with powerful applications in the work of Haagerup and Thorbj\o rnsen \cite{HT,HST}, and then it was refined by Anderson \cite{And} to a version as needed for the problem presented above. However, as it turned out this trick in its algebraic form is not new at all, but has been highly developed for about 50 years in a variety of areas ranging from system engineering to automata to ring theory; indeed it is often called a ``noncommutative system realization''. 

The aim of the present article is to bring these different communities together;
in particular, to introduce on one side the control community to a powerful application of
realizations and to provide on the other side the free probability and operator algebra community with some background on the extensive family of results in this context.

Apart from trying to give a survey on the problems and results from the various communities and show how they are related 
there are at least two new contributions arising from this endeavor:
\begin{itemize}
\item
In the realization context one is actually not solely interested in polynomials;
the realization \eqref{eq:p} given in the example above, though having quite non-trivial implications in free probability, is a trivial one from the point of view of descriptor realizations. The real gist are (noncommutative) rational functions. This makes it clear that the linearization philosophy is not restricted to polynomials but works equally well for rational functions. This results in the possibility to calculate also the distribution of rational functions in free random variables. Such results are presented here for the first time.
\item
From a noncommutative analysis point of view rational functions are noncommutative functions whose domain of definition are usually matrices of all sizes. However, the natural setting for our noncommutative random variables are operators on an infinite dimensional Hilbert space, but still equipped with a trace. This means we want to plug in as arguments in our rational functions not matrices but elements from stably finite operator algebras (stably finite $C^*$-algebras or finite von Neumann algebras). This gives a quite new perspective on noncommutative functions and raises a couple of canonical new questions -- some of which we will answer here.
\end{itemize}

Whereas it is clear what a noncommutative polynomial $p(x_1,\dots,x_g)$ in $g$ non-commuting variables $x_1,\dots,x_g$ is and how one can apply such a polynomial as a function $p(X_1,\dots,X_g)$ to any tuple $(X_1,\dots,X_g)$ of elements from any algebra, the corresponding questions for noncommutative rational functions are surprisingly much more subtle. 
(In the following it shall be understood that we talk about ``noncommutative'' rational functions, and we will usually drop the adjective.) Intuitively, it is clear that a rational function $r(x_1,\dots,x_g)$ should be anything we can get from our variables $x_1,\dots,x_g$ by algebraic manipulations if we also allow taking inverses. Of course, one should not take the inverse of $0$. But it might not be obvious if a given expression is zero or not;
rational expressions can be very complicated, in particular, they can involve nested inversions.
Whereas it is obvious 
that we cannot invert $1-x_1 x_1^{-1}$ within rational functions, it is
probably not clear to the reader whether 
$$r(x_1,x_2,x_3)=x_2^{-1}+x_2^{-1}(x_3^{-1}x_1^{-1}-x_2^{-1})^{-1} x_2^{-1}-(x_2-x_3x_1)^{-1}$$
is identically zero or not, and thus whether $r(x_1,x_2,x_3)^{-1}$ is a valid rational function or not. One of the main problems in defining and dealing with rational functions is to find a way of deciding whether a rational expression represents zero or, equivalently, whether two rational expressions represent the same function. The intuitive idea is of course that they are the same if we can transform one into the other by algebraic manipulations. However, this is not very handy for concrete questions and also a bit clumsy for developing the general theory. There exist actually quite a variety of different approaches to deal with this, resulting in different definitions of rational functions. Of course, in the end all approaches are equivalent, but it is sometimes quite tedious to arrive at this conclusion. Nevertheless, in the end a rational function $\mathfrak{r}$ can be identified with an equivalence class of rational expressions $r$, where the equivalence $r_1\sim r_2$ means that the rational expression $r_1$ can
be transformed into the rational expression $r_2$ by algebraic manipulations.
(This is, in a non-obvious way, the same as requiring that $r_1$ and $r_2$ give the same result when we are plugging in matrices of arbitrary size, such that both expressions are well-defined.)

An important (and quite large) subset of rational functions is given by those which are ``regular'' at one fixed point in $\CC$ (which we will usually choose to be the point 0). These regular rational functions can be identified with a subclass of power series and their theory becomes quite straightforward to handle. In particular, the question whether two expressions are the same becomes quite easy, as it comes down to comparing the coefficients in the power series expansion.
This results in the existence of a very powerful machinery for some aspects of the theory which is not available in the general case.
In this regular setting the main ideas and results can in principle be traced back to the work of Sch\"utzenberger \cite{S61}. In the setting without this restriction the whole theory gets a more abstract algebraic flavor and relies on the basic work of Cohn on the free field \cite{Co71}.

Whichever approach one takes, at one point one arrives at the fundamental insight that it is advantageous to represent rational functions in terms of matrices of linear polynomials of our variables. In the approach we take in this paper the possibility of such a representation is a basic theorem -- this is the content of the linearization trick in the free probability context and goes under the name of ``realization'' in the control community.  
In the more general algebraic approach according to Cohn this matrix realization is more or less the definition of the skew field of rational functions.
In any case, this matrix realization of a rational function is of the form
$\br=C\Lambda^{-1}B$,
where, for some $n\in\NN$, $\Lambda$ is an $n\times n$ matrix, $C$ is an $1\times n$ matrix and $B$ is an $n\times 1$ matrix, all with polynomials as entries. Actually, in $\Lambda$ those polynomials can always
be taken of degree less or equal 1, whereas the entries of $C$ and $B$ can be chosen as constants; such a realization is called ``linear''. We will only consider linear realizations. 
Of course, also in this realization it will be crucial to decide whether we have a meaningful expression or not; i.e., we must be able to decide whether the inverse of our matrix $\Lambda$ of polynomials makes sense.  
By a non-obvious result of Cohn, $\Lambda$ is invertible over the rational functions if and only if it is full, which means that
it cannot be decomposed as a product of smaller strictly rectangular matrices. It is important that in this factorization only polynomial (and not rational) entries are required; hence for deciding whether something is a valid expression within rational functions it suffices to answer a question within the ring of matrices over polynomial functions.

The collection of all noncommutative rational functions gives a skew field, which is also called ``free field'' and usually denoted by $\Klx$. In our context it will become important to treat elements from $\Klx$ not just as abstract algebraic objects, but we will consider them as functions, which we want to evaluate on tuples of operators $X_1,\dots,X_g$ from some fixed algebra $\cA$. We  emphasize that for our purposes  we have to consider infinite-dimensional $C^*$-algebras or von Neumann algebras; hence, many of the established results around noncommutative rational functions -- which are mainly about applying those functions to matrices (of arbitrary sizes) -- do not apply directly to our setting and one of our contributions here is to extend the theory to this more general setting.

Applying a rational function to elements in some algebra raises a number of issues.
First of all, we have to check whether being zero in $\Klx$ implies also being zero when applied to our operators. That this is not an trivial issue is shown by the following example. In $\CC\plangle x_1,x_2\prangle$ we clearly have
$x_1(x_2x_1)^{-1}x_2-1=0$.
However, if we take operators $X_1$ and $X_2$ in some $\cA$ with
$X_2X_1=1$, but $X_1X_2\not=1$,
then the expression $X_1(X_2X_1)^{-1} X_2-1$  makes sense in $\cA$, but we
have there
$$X_1(X_2X_1)^{-1} X_2-1=X_1X_2-1\not= 0.$$
This shows that a rational expression which represents 0 in the free field does not necessarily evaluate to zero when we apply it to operators like above; to put it another way, there exist operators $X_1,\dots,X_g$ and rational expressions $r_1$ and $r_2$, representing the same rational function $\fr$ in the free field, such that $r_1(X_1,\dots,X_g)$ and $r_2(X_1,\dots,X_g)$ both make sense, but do not agree; hence in this case there is no meaningful definition for 
$\fr(X_1,\dots,X_g)$. 
Luckily, it turns out that the above counter example is essentially the only obstruction for this.
One of the basic insights of Cohn (see \cite{Co06}) is that if we consider an algebra which is ``stably finite'' (sometimes also called ``weakly finite'') -- this is by definition an algebra where in the matrices over the algebra any left-inverse is also a right-inverse -- then relations in the free field will also be relations in the algebra
(provided they make sense). We will elaborate on this fact, in our setting, in Sect.~\ref{sect:2-4}.
Stably finite is of course a property which resonates well with operator algebraists. Stably finite $C^*$-algebras are a prominent class of operator algebras and on the level of von Neumann algebras this corresponds to finite ones, i.e., those where we have a trace. In our free probability context we usually are working in a finite setting, thus this is tailor-made to our purposes.
Of course, from an operator algebraic point of view, type III von Neumann algebras or purely infinite $C^*$-algebras are also of much interest, but the above shows that taking rational functions of operators in such a setting might not be a good idea.

So let us now fix a stably finite algebra $\cA$. Let $\mathfrak{r}$ be a rational function, and consider two rational expressions $r_1$ and $r_2$ representing $\mathfrak{r}$. The result of Cohn which we mentioned above says then that for any tuple $(X_1,\dots,X_g)$ in $\cA$, we have $r_1(X_1,\dots,X_g)=r_2(X_1,\dots,X_g)$, provided both sides make sense; we can then safely declare this
common value as the value $\mathfrak{r}(X_1,\dots,X_g)$ of the rational function $\mathfrak{r}$ applied to this tuple $(X_1,\dots,X_g)$. However, we have now to face the question when those evaluations in $r_1$ or $r_2$ make sense -- so we have to address the issue of the domain of rational expressions and rational functions.
Clearly the domain of a rational expression consists of all tuples of operators from $\cA$
which we can insert in our rational expression such that all operators which have to be inverted are actually invertible. For example, the domain of $r(x_1)=x_1^{-1}$ are all invertible operators in $\cA$. However, for a rational function the issue of the domain is more subtle, because different $r_1$ and $r_2$ representing the same rational function $\mathfrak{r}$ might have different domains.
For example, $r_1(x_1)=x_1+x_1x_1^{-1}$ has again only invertible operators in its domain, but the better representation of $\mathfrak{r}$ as $r_2(x_1)=x_1+1$ shows that this restriction was somehow artificial, and is owed more to the bad choice of representation than to a property of our function. A canonical choice of domain for a rational function $\mathfrak{r}$ would be
the union of the domains of all rational expressions representing $\mathfrak{r}$. It is not off-hand clear, though,
whether there exists a rational expression $r$ which has this maximal domain.

If we switch to matrix realizations $\br = C \Lambda^{-1} B$ of our rational function $\fr$ then the situation becomes somehow nicer. We still have the problem that there are different choices for matrix realizations of the same rational function, each of them having possibly different domains. But at least the description of the domain becomes now smoother, as there is only one inversion involved in a matrix realization. It is important for our applications to be able to control the relation between the various domains. It is one of our main results that for each rational expression $r$ we can construct a matrix realization $\br$ such that the domain of $\br$ contains the domain of $r$. In the regular case  we can strengthen this to a statement about rational functions $\fr$. There we can rely on the powerful machinery developed in the control community to cut down matrix realizations to a minimal one, without decreasing the domain. The uniqueness of the minimal realization shows then that for a regular rational function $\fr$ there exists a matrix realization (namely, the minimal one) $\br$ with the property that its domain includes the domain of any rational expression which represents $\fr$. Hence the domain of this minimal matrix realization $\br$ is the canonical choice for the domain of the rational function $\fr$.
We want to emphasize that such results for domains in matrices have been known before, but here we are talking about the much more general situation of domains in any stably finite algebra. For our applications to free probability theory such controls of domains in stably finite $C^*$-algebras or von Neumann algebras is crucial.

Let us clarify those last remarks by an example.
Consider the rational function $\mathfrak{r}$ which is given by the following matrix realization
$${\br}(x_1,x_2,x_3,x_4)=\begin{pmatrix}
1&0
\end{pmatrix}
\begin{pmatrix}
x_1&x_2\\
x_3&x_4
\end{pmatrix}^{-1}
\begin{pmatrix}
1\\0
\end{pmatrix}.
$$
Then the canonical maximal domain of this rational function $\mathfrak{r}$ in a stably finite algebra $\cA$  are tuples $(X_1,X_2,X_3,X_4)$ in $\cA$, for which the
matrix
$\begin{pmatrix}
X_1&X_2\\
X_3&X_4
\end{pmatrix}$
is invertible in $M_2(\cA)$; in which case the value of our rational function for those operators is given by
$$\mathfrak{r}(X_1,X_2,X_3,X_4)=\begin{pmatrix}
1&0
\end{pmatrix}
\begin{pmatrix}
X_1&X_2\\
X_3&X_4
\end{pmatrix}^{-1}
\begin{pmatrix}
1\\0
\end{pmatrix}.
$$
However, there is no global (scalar as opposed to matrix of) rational expression $r$ for capturing this whole domain. The Schur complement formulas give us such expressions, but, according to the chosen pivot, we have different expressions, and those have different domains. So we have for example
$$r_1(x_1,x_2,x_3,x_4)=x_1^{-1}+x_1^{-1}x_2(x_4-x_3x_1^{-1}x_2)^{-1}x_3x_1^{-1}$$
or 
$$r_2(x_1,x_2,x_3,x_4)=(x_1-x_2x_4^{-1}x_3)^{-1}.$$
Both $r_1$ and $r_2$ represent the same rational function $\mathfrak{r}$, but they have different domains, and none of them has the maximal domain.

The paper is organized as follows.

In Sect. \ref{sec:NCrat}, we first give a brief introduction to the general theory of noncommutative rational expressions and functions, emphasizing the important case where they are regular at $0$, i.e., when their domain contains the point $0$.

Sect. \ref{sec:Realization} describes then system realizations for NC multi-variable rational functions, extending the classical work of Sch\"{u}tzenberger \cite{S61}. M. Fliess \cite{F74a} subsequently used Hankel operators effectively in such realizations. See the book \cite{BR84} for a good exposition. A basic reference in the operator theory community is J. Ball, T. Malakorn, and G. Groenewald \cite{BMG05}, and more recently D. S. Kaliuzhnyi-Verbovetskyi and V. Vinnikov \cite{KVV12b}.

These parts are mostly of expository nature, but we lay here also the groundwork for our subsequent considerations: while noncommutative rational functions were treated in the literature before only as formal objects rather than as actual functions, we are going to address the seemingly unexplored question of evaluating rational expressions, rational functions, and their descriptor realizations on elements coming from general algebras.

In Sect. \ref{sec:evaluations}, we provide a new framework for treating such questions. As we will see in Sect. \ref{sect:2-4}, especially with Theorem \ref{thm:sameOnX} and Theorem \ref{thm:sameOnX_generalized}, the crucial condition that guarantees that rational functions and their descriptor realizations behave well under evaluation is that the underlying algebra is stably finite. The notion of stably finite algebras is introduced in Sect. \ref{subsec:stably_finite}.
In Sect. \ref{subsec:minimality_implies_maximal_domain}, we contribute with Lemma \ref{lem:domMin} the very important observation that the minimal selfadjoint descriptor realization of a selfadjoint matrix of rational expressions contains -- again under the stably finite hypothesis -- the domain of all other selfadjoint descriptor realizations and thus has the largest domain among all of them.
This extends the corresponding result of Kaliuzhnyi-Verbovetskyi and Vinnikov \cite{KVV09} and of Volcic \cite{Vol16} on matrix algebras.
In Sect. \ref{subsec:minimal_realizations} we deepen this observation. In fact, we will prove here Theorem \ref{thm:rep_min}, which states that over any stably finite algebra $\cA$, the $\cA$-domain of any matrix of rational expressions is contained in the $\cA$-domain of each of its minimal realizations; an analogous result for selfadjoint matrices of rational expressions will also be given in Theorem \ref{thm:rep_min}.

These considerations rely crucially on results that will be presented before in Sect. \ref{sec:Algorithm}. Inspired both by the theory of descriptor realizations and by closely related constructions of Anderson \cite{And}, Cohn \cite{Co71,Co06}, and Malcolmson \cite{Mal78}, we introduce here the notion of formal linear representations. Like the approach presented in \cite{Vol15}, formal linear representations apply to general rational expressions (i.e., without the regularity constraint). Furthermore, they enjoy the important feature of having comparably large domains on any unital algebra; in Theorem \ref{thm:realizations_matrices_of_rational_expressions}, we will see how formal linear representations allow us to construct special descriptor realizations having the same property.

Sect. \ref{sec:FP} is then devoted to several applications of descriptor realizations in the context of free probability. After a brief introduction to scalar- and operator-valued free probability, where we recall in particular the powerful subordination results about the operator-valued free additive convolution that were obtained in \cite{BMS2013}, we finally present our main results, Theorem \ref{thm:mainrep1} and Theorem \ref{thm:mainrep2}.
Roughly speaking, Theorem \ref{thm:mainrep1} explains, in the setting of a $C^\ast$-probability space $(\cA,\phi)$, how the (matrix-valued) Cauchy transform $G_{r(X)}$ of a selfadjoint rational expression $r$ (or a matrix thereof) and any selfadjoint point $X=(X_1,\dots,X_g)$ in the $\cA$-domain of $r$ can be computed with the help of generalized descriptor realizations realizing $r$ at the point $X$ in the sense of Definition \ref{def:realized_by}. At the end, this means that $G_{r(X)}$ can be obtained from the matrix-valued Cauchy transform $G_{\hpen(X)}$ for some linear pencil
$$\hpen(x) = \hpen_0 + \hpen_1 x_1 + \dots + \hpen_g x_g$$
that consists of selfadjoint matrices $\hpen_0,\hpen_1,\dots,\hpen_g$ over $\CC$.
Theorem \ref{thm:mainrep2} then tells us how such generalized descriptor realizations realizing $r$ at a given point $X$ can be found explicitly.

Finally, in Sect. \ref{subsec:main_problems}, we explain how these Theorems \ref{thm:mainrep1} and \ref{thm:mainrep2} provide a complete solution of the two Problems \ref{prob:dist} and \ref{prob:Brown}, asking for an algorithm that allows to compute (at least numerically) distributions and even Brown measures of rational expressions, evaluated in freely independent selfadjoint elements with given distributions; this generalizes results from \cite{BMS2013} and \cite{BSS2015} from the case of noncommutative polynomials to noncommutative rational expressions. We conclude in Sect. \ref{subsec:examples} with several concrete examples.


\section{An Introduction to NC Rational Functions}
\label{sec:NCrat}
  
At first glance this notation section may look formidable to many readers. We offer the reassurance that much of it lays out formal properties of noncommutative rational functions which merely capture manipulations with functions on matrices which are very familiar to matrix theorists and operator theorists. People in these areas might well want to skip these fairly intuitive basics, on first reading, to move quickly to Section \ref{sec:Realization} and beyond.

Beware, rather unintuitive is Section \ref{sect:2-4}, which tells us that rational expressions have good properties when evaluated on a type II$_1$ factor.

\subsection{The Schur Complement Formula}
\label{subsec:Schur}

The Schur complement formula is a well-known tool to compute inverses of (block) matrices having entries in a unital but not necessarily commutative algebra $\cA$ over the field $\bbK$ of real or complex numbers. Since this statement are crucial for our purposes, we include it here for convenience of the reader.

Throughout the article, we denote by $M_{m \times n}(\cA)$ the space of all $m\times n$ matrices with entries in $\cA$; in particular, we will abbreviate $M_{n\times n}(\cA)$ by $M_n(\cA)$.

Let matrices $A\in M_k(\bbK) \otimes_\bbK \cA$, $B\in M_{k\times l}(\bbK) \otimes_\bbK \cA$, $C\in M_{l\times k}(\bbK) \otimes_\bbK \cA$ and $D\in M_l(\bbK) \otimes_\bbK \cA$ be given and assume that $D$ is invertible in $M_l(\bbK) \otimes_\bbK \cA$. Then, the matrix
$$\begin{pmatrix} A & B\\ C & D\end{pmatrix}$$
is invertible in $M_{k+l}(\bbK) \otimes_\bbK \cA$ if and only if the \textbf{Schur complement}\index{Schur complement} $A-BD^{-1}C$ is invertible in $M_k(\bbK) \otimes_\bbK \cA$, and in this case have the relation
\begin{equation}\label{eq:Schur}
\begin{aligned}
\begin{pmatrix} A & B\\ C & D \end{pmatrix}^{-1}
&= \begin{pmatrix} 1 & 0\\ -D^{-1}C & 1\end{pmatrix} \begin{pmatrix} (A-BD^{-1}C)^{-1} & 0\\ 0 & D^{-1}\end{pmatrix} \begin{pmatrix} 1 & -BD^{-1}\\ 0 & 1\end{pmatrix}\\
&= \begin{pmatrix} 0 & 0\\ 0 & D^{-1}\end{pmatrix} + \begin{pmatrix} 1\\ -D^{-1}C\end{pmatrix} (A-BD^{-1}C)^{-1} \begin{pmatrix} 1 & -BD^{-1}\end{pmatrix},
\end{aligned}
\end{equation}
which is often called the \textbf{Schur complement formula}\index{Schur complement!formula}.

\subsection{NC Polynomials and NC Rational Expressions}
\label{sec:NCRatIntro}
  
NC rational functions suited to our purposes here are described in detail in \cite{HMV06}, Section 2 and Appendix A. Our discussion here draws heavily from that.

That process has a certain unavoidable heft to it, and we hope to make this paper accessible to people in operator theory where NC rational functions are manipulated successfully without too much formalism. Thus we give here a brief version of our formalism.

\subsubsection{A Few Words about Words}

Throughout this paper $x = (x_1, \ldots , x_g)$ denotes a $g$-tuple of noncommuting variables $x_1,\dots,x_g$.

Let ${\WOR}_g$ denote the free monoid on the $g$ symbols $\{\fgg_1,\ldots,\fgg_g\}$. For a word $w = \fgg_{i_1} \ldots \fgg_{i_k} \in {\WOR}_g$ we define $x^w = x_{i_1} \ldots x_{i_k}$ and we put $x^\emptyset = 1$ for the empty word $\emptyset \in {\WOR}_g$.

Occasionally we consider variables which are formal transposes $x_j^{\transpose}$ of a
variable $x_j$, and words in all of these variables
$ x_1, \ldots , x_g, x_1^{\transpose}, \ldots , x_g^{\transpose} $,
often called the words in $x, x^{\transpose}$.
If $w$ is in $ \WOR_{g}$, then
$w^{\transpose}$ denotes the transpose of a word $w$.
For example, given the word (in the $x_j$'s)
$x^w=x_{j_1}x_{j_2}\ldots x_{j_n}$,
the involution applied to $x^w$ is
$(x^w)^{\transpose}=x_{j_n}^{\transpose} \ldots x_{j_2}^{\transpose} x_{j_1}^{\transpose}$,
which, if the variables $x_j$ are symmetric (i.e., if $x_j^{\transpose} = x_j$),
is
$x^{(w^{\transpose})}=x_{j_n}\ldots x_{j_2} x_{j_1}$. In particular, $\emptyset^{\transpose} = \emptyset$.

Note that, depending on whether we work in a real or complex setting, we will sometimes write $x_j^{\ast}$ instead of $x_j^{\transpose}$. In these cases, it is also appropriate to call the variables $x_1, \ldots, x_g$ selfadjoint if they satisfy the condition $x_j^{\ast} = x_j$.

\subsubsection{The Algebra of NC Polynomials}

Throughout the following, $\bbK$ stands for either the field $\bbR$ of real numbers or the field $\bbC$ of complex numbers.

We define $\bbK \langle x_1,\ldots,x_g \rangle$ to be the algebra of
noncommutative polynomials over $\bbK$ in the non-commuting
variables $x_1,\ldots,x_g$. Each element $p\in \bbK \langle x_1,\ldots,x_g \rangle$ has a unique representation of the form $p = \sum_{w\in \WOR_g} p_w x^w$ with some family $(p_w)_{w\in\WOR_g}$ of coefficients $p_w \in \bbK$, for which $\{w\in \WOR_g|\ p_w\neq 0\}$ is finite. We often abbreviate
$\bbK \langle x_1,\ldots,x_g \rangle$ by $\bbK\langle x \rangle$.

When the variables $x_j$ are symmetric (respectively selfadjoint) the algebra
$\bbK \langle x \rangle$
maps to itself under the involution $^{\transpose}$ (respectively $^{\ast}$).

In the real case, for non-symmetric variables the algebra of polynomials in them is denoted
$$
\bbR \langle x_1,\ldots,x_g, x_1^{\transpose}, \ldots, x_g^{\transpose} \rangle \qquad \text{or} \qquad \bbR \langle x, x^{\transpose} \rangle.
$$
Accordingly, in the complex case, we denoted by
$$
\bbC \langle x_1,\ldots,x_g, x_1^{\ast}, \ldots, x_g^{\ast} \rangle \qquad \text{or} \qquad \bbC \langle x, x^{\ast} \rangle
$$
the $\ast$-algebra of polynomials in non-selfadjoint variables $x_1,\ldots,x_g$.

\subsubsection{NC Rational Expressions}
  \label{item:intorat3}

We define a \textbf{NC rational expression in the variables $x=(x_1,\ldots,x_g)$}\index{rational expression} to be a syntactically valid combination of elements from the ring $\bbK\langle x_1,\dots,x_g\rangle$ of NC polynomials (over the field $\bbK$ of real or complex numbers), the arithmetic operations $+$, $\cdot$, and $\cdot^{-1}$, and parentheses $($, $)$.

This definition requires some explanation.
First of all, the reader might worry for instance about
\begin{equation}\label{eq:ex_degenerate}
0^{-1} \qquad \text{and} \qquad ((-1) + x_1 \cdot x_1^{-1})^{-1}, 
\end{equation}
both of which are valid NC rational expressions according to our definition. This, however, only highlights the important fact that NC rational expressions are purely formal objects, meaning in particular that we ignore all familiar arithmetic rules. Informally speaking, NC rational expressions should thus be seen as a chain of (nested) arithmetic operations rather than as algebraic objects of their own right; this will be important in Section \ref{subsec:rat_evaluation}, where evaluations of NC rational expressions on unital algebras will be considered.
Even without having the rigorous definition of domains and evaluations yet, the reader might expect that the examples shown above should not lead to a meaningful evaluation anywhere. This is indeed the case, so that we are able to rule out such ``non-degenerate'' NC rational expressions at least in hindsight; but still, one would prefer to have a more direct tool to exclude them in advance.
Unfortunately, there is no hope to find such a criterion that works in full generality, since domains strongly depend on the underlying algebras. There is however the important class of regular NC rational expressions, which can be easily characterized and excludes the examples given above.

We use a recursion to define the notion of a
  \textbf{NC rational expression $r$ regular (at zero)} \index{rational expression!regular (at zero)}, say in the variables $x=(x_1,\ldots,x_g)$, and its value $r(0)$ at $0$.
(For details of this definition see the excellent survey \cite{KVV12}.)
  This class includes noncommutative polynomials and $p(0)$ is the
  value of $p$ at $0$, which means that if $p$ is written as $p = \sum_{w\in \WOR_g} p_w x^w$, then $p(0) := p_\emptyset$.
  If $p(0)$ is invertible, then $p$ is invertible, this
  inverse is a NC rational expression regular at $0$,
  and $p^{-1}(0)=p(0)^{-1}$.
  Formal sum and products of NC rational expressions
  regular at $0$ and their value at $0$ are defined accordingly.
  Finally, a NC rational expression $r$ regular at $0$
  can be inverted  provided $r(0)\neq 0$; this inverse
  is an  NC rational expression, and $r^{-1}(0)=r(0)^{-1}$.
Note that in general $r(0)$ can itself be zero for the rational expressions we consider. Only the parts of it which must be inverted are required to be different from $0$ at $0$.

\begin{example}   
Consider the rational expression
 \begin{equation}
 \label{eq:expr}
  r = (1-x_1)^{-1} + (1-x_1)^{-1}x_2 \left [
(1-x_1)-x_2(1-x_1)^{-1}x_2 \right ]^{-1} x_2(1-x_1)^{-1}.
\end{equation}
Note it is made from inverses of
$(1 -x_1)$ and $(1-x_1)-x_2(1-x_1)^{-1}x_2$
both of which meet our technical invertible at $0$ convention.
\end{example}

\subsection{Evaluations of NC Polynomials and of NC Rational Expressions}
\label{subsec:rat_evaluation}

Suppose that $\cA$ is any unital algebra over the field $\bbK$. The unit of $\cA$ will be denoted by $1_\cA$.

\subsubsection{Polynomial Evaluations}

If $p$ is a noncommutative polynomial in the variables $x_1,\dots,x_g$, the evaluation $p(X)$ of $p$ at any point $X=(X_1,X_2,\dots,X_g)$ in $\cA^g$ is defined by simply replacing $x_j$ by $X_j$. More formally, we declare $p(X) := \ev_X(p)$, where $\ev_X$ denotes the unital algebra homomorphism
\begin{equation}\label{eq:ev_polys}
\ev_X:\ \bbK\langle x_1,\ldots,x_g\rangle \to \cA,\quad \sum_{w\in\WOR_g} p_w x^w \mapsto \sum_{w\in\WOR_g} p_w X^w,
\end{equation}
where $X^w := X_{i_1} \ldots X_{i_k}$ for any word $w=\chi_{i_1} \ldots \chi_{i_k}\in\WOR_g$ and $X^\emptyset := 1_\cA$. In particular, we have that
$$p(0_\cA,\dots,0_\cA) = p(0_\bbK,\dots,0_\bbK) 1_\cA,$$
where we typically suppress the subscripts on $0$.

In the complex case, if $\cA$ is even a $\ast$-algebra, the evaluation homomorphism extends canonically to a $\ast$-homomorphism
$$\ev_X:\ \bbC \langle x_1,\ldots,x_g, x_1^\ast,\dots, x_g^\ast\rangle \to \cA \quad \text{resp.} \quad \ev_X:\ \bbC \langle x_1,\ldots,x_g \rangle \to \cA$$
for non-selfadjoint respectively selfadjoint elements $X_1,\dots,X_g$ in $\cA$. The same holds true, of course, in the real case.

\subsubsection{Rational Expression Evaluations}
\label{sec:evalrat}

Defining evaluations for NC rational expressions at points in $\cAg$ for a unital $\bbK$-algebra is slightly more delicate than for NC polynomials, because it requires worrying about the domain of an expression.
These considerations will lead us to a (very useful) equivalence relation on rational expressions; this will be discussed in detail in Section \ref{sec:evalequiv}.

\begin{definition}\label{def:ev_dom}
For any NC rational expression $r$ in $x=(x_1,\dots,x_g)$, we define its \textbf{$\cA$-domain $\dom_\cA(r)$}\index{rational expression!domain} together with its \textbf{evaluation $r(X) := \ev_X(r)$}\index{rational expression!evaluation} at any point $X=(X_1,\dots,X_g)\in\dom_\cA(r)$ by the following rules:
\begin{enumerate}
 \item If $p$ is any NC polynomial, then $\dom_\cA(p) = \cA^g$ with $\ev_X(p)$ defined as in \eqref{eq:ev_polys}.
 \item\label{it:ev_dom-sum} If $r_1,r_2$ are NC rational expressions in $x$, we have $$\dom_\cA(r_1 \cdot r_2) = \dom_\cA(r_1) \cap \dom_\cA(r_2) \quad\text{and}\quad \ev_X(r_1 \cdot r_2) = \ev_X(r_1) \cdot \ev_X(r_2).$$
 \item\label{it:ev_dom-prod} If $r_1,r_2$ are NC rational expressions in $x$, we have $$\dom_\cA(r_1 + r_2) = \dom_\cA(r_1) \cap \dom_\cA(r_2) \quad\text{and}\quad \ev_X(r_1 + r_2) = \ev_X(r_1) + \ev_X(r_2).$$
 \item\label{it:ev_dom-inv} If $r$ is a NC rational expression in $x$, we have $$\dom_\cA(r^{-1}) = \{X\in\dom_\cA(r): \text{$\ev_X(r)$ is invertible in $\cA$}\} \quad\text{and}\quad \ev_X(r^{-1}) = \ev_X(r)^{-1}.$$
\end{enumerate}
\end{definition}

\subsection{Evaluation equivalence of NC Rational Expressions and NC Rational Functions}

We shall ultimately define the notion of a NC rational function in terms of NC rational expressions.

A difficulty is that two different expressions, such as
\begin{equation}
\label{eq:exr1r2}
       r_1= x_1 (1 - x_2 x_1)^{-1} \ \ \ \
\text{and} \ \ \ \
       r_2 = (1 - x_1 x_2)^{-1}x_1
\end{equation}
can be converted to each other with algebraic operations. Our definition of NC rational expressions, however, fully ignores all arithmetic rules, as we have explained in Section \ref{item:intorat3}. Thus one needs to specify an equivalence relation on rational expressions that reflects this algebraic structure.

The classical notion prevailing in automata and systems theory works in the regular case and uses formal power series;
this is summarized later in Section \ref{sec:series}. Its use to a free analyst is that the classical theorems we need 
are proved and stated using this type of equivalence.

However, what a free analyst often does is substitute matrices
or operators in for the variables and so one needs a notion of equivalence based on $r_1$ and $r_2$
having the same values when evaluated on the same operators.
This type of equivalence is developed in \cite{HMV06}
for evaluation on matrices
and proved to be the same as classical power series equivalence for rational expressions regular at zero.
This alleviates technical headaches.
We now define the terms just discussed.

\subsubsection{Evaluation Equivalence of Rational Expressions and NC Rational Functions}
\label{sec:evalequiv}

   We can use evaluations
    to define an equivalence on noncommutative
   rational expressions which we call evaluation equivalence.
   Two NC rational expressions $r$ and $\widetilde r$
   are \df{$\cA$-evaluation equivalent} provided
   $$r(X)=\widetilde{r}(X) \quad  \mbox{for each } \quad 
   X \in \domA{r} \cap \domA{\widetilde{r}}.$$
  Of course the domains for some algebras $\cA$
  might be small in which case this equivalence is weak.

The usual domains considered in the free analysis context consists of matrices of all sizes. We denote by $\bbM(\RR)$ and $\bbM(\CC)$ the graded algebras of all square, real or complex, matrices, i.e.,
\begin{equation}
 \bbM(\RR):= \coprod_{n=1}^ \infty M_n(\RR)  \qquad \text{or} \qquad \bbM(\CC):= \coprod_{n=1}^\infty M_n(\CC).
\end{equation}
Accordingly, we put for any NC rational expression $r$ over $\bbK$
$$\dom_{\bbM(\bbK)}(r) := \coprod_{n=1}^\infty \dom_{M_n(\bbK)}(r) \subseteq \coprod_{n=1}^\infty M_n(\bbK)^g.$$
We may also introduce an equivalence relation with respect to those graded algebras: two NC rational expressions $r_1$ and $r_2$ are said to be \textbf{$\bbM(\bbR)$-equivalent}\index{rational expression!$\bbM(\bbK)$-evaluation equivalent} (respectively \textbf{$\bbM(\CC)$-equivalent}) or simply \df{matrix-equivalent}, if they are $M_n(\RR)$-evaluation equivalent (respectively $M_n(\CC)$-evaluation equivalent) for all $n>0$, or equivalently, if
$$r_1(X) = r_2(X) \quad\text{for each}\quad X \in \dom_{\bbM(\bbK)}(r_1) \cap \dom_{\bbM(\bbK)}(r_2).$$
We take this as our definition of a rational function. However, in order to exclude exceptional expressions such as those given in \eqref{eq:ex_degenerate}, we will restrict to \textbf{non-degenerate}\index{rational expression!non-degenerate} NC rational expressions, meaning NC rational expressions $r$ satisfying $\dom_{\bbM(\bbK)}(r) \neq \emptyset$.
A \df{NC rational function} or simply \df{rational function} is defined then to be the class of $\bbM(\bbK)$-equivalent 
non-degenerate rational expressions; see also Appendix A.6 of \cite{HMV06}. 
This corresponds to classical engineering type situations and we typically use German (Fraktur) font to
denote NC rational functions. Our definition is justified by \cite{KVV12}, where it was shown that this construction indeed yields the free field $\CC\plangle x_1,\dots,x_g\prangle$, which is the universal skew field of fractions associated to the ring $\CC\langle x_1,\dots,x_g\rangle$ of NC polynomials; see \cite{Co71}[Chapter 7].
Furthermore, it can be shown that if two rational expressions are matrix-equivalent, one expression can be changed into the other by algebraic operations; for this see \cite{CR}.
\textbf{Regular NC rational functions}\index{NC rational function!regular}\index{rational function!regular}, namely $\bbM(\bbK)$-equivalence classes of regular NC rational expressions, constitute an important subclass of all NC rational functions, for which an alternative description can be given; we will say more about this in Section \ref{sec:series}. Moreover, in Lemma 16.5 of \cite{HMV06} it is shown that $\dom_{M_n(\bbR)}(r)$, for $r$ which is regular at zero, is a non-empty Zariski open subset of $M_n(\bbR)^g$ containing $0$.

\subsection{Matrix Valued NC Rational Expressions and Functions}
\label{sec:matVrat}

  The notion of rational expression
  is broadened by using matrix constructions.
  Indeed, this more general notion is
  often used in this paper.

Throughout the following, we we will work with noncommuting variables $x=(x_1,\ldots,x_g)$ and we suppose that they commute with scalar matrices of any size. Given a matrix $\MAT$ with entries $\MAT_{ij}$ and a variable $x_l$, let
\begin{equation}\label{eq:scalar_matrices_commute_with_indeterminates}
\MAT x_l = x_l \MAT
\end{equation}
denote the matrix with entries given by
$$(\MAT x_l)_{ij}= \MAT_{ij}x_l.$$
The identification made in \eqref{eq:scalar_matrices_commute_with_indeterminates} precisely means that scalar matrices $\MAT$ are supposed to commute with the indeterminates $x_1, \ldots , x_g$.

\subsubsection{NC Linear Pencils}
\label{subsec:NC_Linear_Pencils}

Linear pencils are the most basic matrix-valued expressions and they will play a fundamental role in what follows. We introduce them together with natural operations in the following definition.
 
\begin{definition}
\label{def:linear_pencils}
Let $x=(x_1,\dots,x_g)$ be a $g$-tuple of variables.
\begin{itemize}
 \item[(i)] A \textbf{linear pencil (of size $n \times m$) in $x$}\index{NC linear pencil} is an expression of the form
$$\Lambda(x) := \Lambda^{(0)} + \Lambda^{(1)} x_1 + \dots + \Lambda^{(g)} x_g$$
with matrices $\Lambda^{(0)},\Lambda^{(1)},\dots,\Lambda^{(g)}\in M_{n\times m}(\CC)$.

Note the common term linear pencil is a misnomer
in  that {\it linear pencils are actually affine linear}, that is,
the pencil $\Lambda(x)$ is linear if and only if $\Lambda^{(0)}=0$.

 \item[(ii)] If linear pencils $\Lambda_{k,l}(x)$ of size $n_k \times m_l$ in $x$ with $\Lambda_{k,l}=(\Lambda^{(0)}_{k,l}, \Lambda^{(1)}_{k,l}, \ldots, \Lambda^{(g)}_{k,l})$ are given for $1\leq k \leq K$ and $1 \leq l \leq L$, we write
$$\Lambda(x) := \begin{pmatrix} \Lambda_{1,1}(x) & \hdots & \Lambda_{1,L}(x)\\ \vdots & \ddots & \vdots\\ \Lambda_{K,1}(x) & \hdots & \Lambda_{K,L}(x) \end{pmatrix}$$
for the linear pencil $\Lambda = \Lambda^{(0)} + \Lambda^{(1)} x_1 + \dots + \Lambda^{(g)} x_g$ of size $(n_1 + \dots + n_K) \times (m_1 + \dots + m_L)$ in $x$ with
$$\Lambda^{(j)} := \begin{pmatrix} \Lambda_{1,1}^{(j)} & \hdots & \Lambda_{1,L}^{(j)}\\ \vdots & \ddots & \vdots\\ \Lambda_{K,1}^{(j)} & \hdots & \Lambda_{K,L}^{(j)} \end{pmatrix},\qquad\text{for $j=0,1,\dots,g$}.$$

 \item[(iii)] If a linear pencil $\Lambda(x) = \Lambda^{(0)} + \Lambda^{(1)} x_1 + \dots + \Lambda^{(g)} x_g$ of size $n \times m$ in $x$ and matrices $S\in M_n(\bbC)$ and $T \in M_m(\bbC)$ are given, we define
$$(S \Lambda T)(x) := S \Lambda(x) P = (S\Lambda^{(0)}T) + (S\Lambda^{(1)}T) x_1 + \dots + (S\Lambda^{(g)}T) x_g$$
 
 \item[(iv)] If a linear pencil $\Lambda = \Lambda^{(0)} + \Lambda^{(1)} x_1 + \dots + \Lambda^{(g)} x_g$ of size $n \times m$ is given, then
$$\Lambda^\ast(x) := \Lambda(x)^\ast = (\Lambda^{(0)})^\ast + (\Lambda^{(1)})^\ast x_1 + \dots + (\Lambda^{(g)})^\ast x_g$$
defines a linear pencil of size $m \times n$ in $x$. In the real case, $\Lambda^\transpose$ is defined analogously. A square linear pencil $\Lambda$ with real (respectively complex) matrices satisfying $\Lambda^\transpose = \Lambda$ (respectively $\Lambda^\ast = \Lambda$) is called \textbf{symmetric}\index{NC linear pencil!symmetric} (respectively \textbf{selfadjoint}\index{NC linear pencil`!selfadjoint}).
\end{itemize}
\end{definition}

Conversely, if matrices $\MAT=(\MAT^{(0)}, \MAT^{(1)}, \ldots, \MAT^{(g)})$ of size $n\times m$ are given, we denote by $\Lambda_\MAT$ the linear pencil of size $n\times m$ that is defined by
\begin{equation}\label{eq:Lambda-pencil}
\Lambda_\MAT(x) := M^{(0)} - L_\MAT(x) \qquad\text{with}\qquad L_\MAT(x) := \MAT^{(1)} x_1 + \dots + \MAT^{(g)} x_g.
\end{equation}
The minus sign appearing in front of $L_\MAT$ might seem strange at first sight, but below, we will see that this is indeed a reasonable choice.
With this notation, we clearly have $S \Lambda_\MAT T = \Lambda_{S \MAT T}$ and $\Lambda_\MAT^\ast = \Lambda_{\MAT^\ast}$ (respectively $S L_\MAT T = L_{S \MAT T}$ and $L_\MAT^\ast = L_{\MAT^\ast}$), where we put
\begin{align*}
S \MAT T &:= (S \MAT^{(0)} T, S \MAT^{(1)} T, \dots, S\MAT^{(g)} T) \qquad\text{and}\\
\MAT^\ast &:= ((\MAT^{(0)})^\ast, (\MAT^{(1)})^\ast, \dots, (\MAT^{(g)})^\ast).
\end{align*}
In the real case, transposition is treated similarly.

As an example, for
$$
\MAT_0:=
\left(
\begin{array}{cc}
     1 & 0 \\
     0 & -1 \\
\end{array}
\right)
\quad
\MAT_1:= 
- \left(
\begin{array}{cc}
     3 & 2 \\
     2 & 1 \\
\end{array}
\right)
\quad
\MAT_2:= - \left(
\begin{array}{cc}
     5 & 4 \\
     4 & 2 \\
\end{array}
\right),
$$
the pencil is
$$
\pen(x)=\left(
\begin{array}{cc}
    1+ 3 x_1 +5x_2 & 2 x_1 +4 x_2 \\
     2 x_1 +4 x_2 & -1+ x_1 + 2x_2 \\
\end{array}
\right).
$$

\subsubsection{Evaluation of pencils}
\label{subsubsec:eval_pencils}

The variables $x_i$ will often be evaluated on $X_i$
which are square matrices 
or elements of a particular algebra $\cA$ over $\bbK$.
For $n\times m$ NC linear pencils the evaluation rule is 
\begin{equation}\label{eq:pencil_eval-1}
\Lambda(X) = \Lambda^{(0)} \otimes 1_\cA + \Lambda^{(1)} \otimes X_1 + \dots + \Lambda^{(g)} \otimes X_g,
\end{equation}
where $X=(X_1,X_2,\dots,X_g)$ is in $\cA^g$, so that $\Lambda(X) \in M_{n\times m}(\bbK) \otimes_\bbK \cA$.
Later evaluation will be discussed in more generality.

\subsubsection{Matrix-valued NC Rational Expressions}
\label{subsubsec:matVrat}

  Matrix-valued NC rational expressions are defined
  by analogy to (scalar-valued) rational expressions as presented in Section \ref{item:intorat3}:
  a \df{matrix-valued NC polynomial} is a NC polynomial
  with matrix coefficients.
  The $\bbK$-vector space of all $n\times m$-matrix-valued NC polynomials in the variables $x=(x_1,\ldots,x_g)$ will be denoted by $M_{n\times m}(\bbK) \langle x_1,\ldots,x_g\rangle$; note that the latter forms a $\bbK$-algebra in the case of square matrix-valued NC polynomials, i.e, if $n=m$.
  The reader should be aware of the fact that $M_{n\times m}(\bbK) \langle x_1,\ldots,x_g\rangle$ is also subjected to the convention \eqref{eq:scalar_matrices_commute_with_indeterminates}, so that canonically $M_{n\times m}(\bbK) \langle x_1,\ldots,x_g\rangle \cong M_{n\times m}(\bbK) \otimes_\bbK \bbK\langle x_1,\dots,x_g\rangle$. Accordingly, each element $p$ in $M_{n\times m}(\bbK) \langle x_1,\ldots,x_g\rangle$ has like in the scalar-valued setting a unique representation as $p = \sum_{w\in\WOR_g} p_w x^w$ with coefficients $p_w \in M_{n\times m}(\bbK)$, where only finitely many of them are different from zero. 

  All matrix-valued NC polynomials are matrix-valued rational
  expressions and a general \textbf{matrix-valued NC rational expression}\index{rational expression!matrix-valued} is built in a syntactically valid way -- by using the operations $+$, $\cdot$, and ${\cdot}^{-1}$, and by placing parentheses -- out of matrix-valued NC polynomials $M_{n\times m}(\bbK) \langle x_1,\dots,x_g\rangle$.
  Of course, the constraint ``syntactically valid'' includes here also the requirements that matrices are added and multiplied only when their sizes allow and that the operation ${\cdot}^{-1}$ is applied only to matrix-valued rational expressions of square type (i.e., $n=m$).
  Note that we also agree on the convention \eqref{eq:scalar_matrices_commute_with_indeterminates}. 
  
  The class of \textbf{regular matrix-valued NC rational expressions $r$}\index{rational expression!matrix-valued!regular} together with their value $r(0)$ are defined analogously: if $p$ is a square matrix-valued NC polynomial
and $p(0)$ is invertible, then $p$ has an inverse $p^{-1}$ whose value at $0$ is given by $p^{-1}(0) = p(0)^{-1}$.
Regular matrix-valued NC rational expressions $r_1$ and $r_2$ can be
added and multiplied whenever their dimensions allow,
with the value at $0$ of the sum and product defined accordingly.
Finally,
a regular square matrix-valued NC rational expression $r$ has a regular inverse
as long as $r(0)$ is invertible.
(See Appendix A \cite{HMV06} for details.)

\subsubsection{Evaluation of Matrix-Valued NC Rational Expressions}

Given any unital $\bbK$-algebra $\cA$, we may define evaluation of matrix-valued NC polynomials via the canonical evaluation map
\begin{equation}\label{eq:ev_polys-matval}
\ev_X:\ M_{n\times m}(\bbK) \langle x_1,\ldots,x_g\rangle \to M_{n\times m}(\bbK) \otimes_\bbK \cA,\quad \sum_{w\in \WOR_g} p_w x^w \mapsto \sum_{w\in \WOR_g} p_w \otimes X^w
\end{equation}
in analogy to \eqref{eq:ev_polys}. Like for scalar-valued NC rational expressions in Definition \ref{def:ev_dom}, we may define now the notion of $\cA$-domain and evaluations for matrix-valued NC rational expressions.

\begin{definition}\label{def:ev_dom-matval}
For any matrix-valued NC rational expression $r$ in $x=(x_1,\dots,x_g)$, we define its \textbf{$\cA$-domain $\dom_\cA(r)$}\index{rational expression!domain} together with its \textbf{evaluation $r(X) := \ev_X(r)$}\index{rational expression!evaluation} at any point $X=(X_1,\dots,X_g)\in\dom_\cA(r)$ by the following rules:
\begin{enumerate}
 \item If $p$ is any matrix-valued NC polynomial, then $\dom_\cA(p) = \cA^g$ with $\ev_X(p)$ defined like in \eqref{eq:ev_polys-matval}.
 \item If $r_1,r_2$ are matrix-valued NC rational expressions in $x$, we have $$\dom_\cA(r_1 \cdot r_2) = \dom_\cA(r_1) \cap \dom_\cA(r_2) \quad\text{and}\quad \ev_X(r_1 \cdot r_2) = \ev_X(r_1) \cdot \ev_X(r_2).$$
 \item If $r_1,r_2$ are matrix-valued NC rational expressions in $x$, we have $$\dom_\cA(r_1 + r_2) = \dom_\cA(r_1) \cap \dom_\cA(r_2) \quad\text{and}\quad \ev_X(r_1 + r_2) = \ev_X(r_1) + \ev_X(r_2).$$
 \item If $r$ is a square matrix-valued NC rational expression in $x$, we have $$\dom_\cA(r^{-1}) = \{X\in\dom_\cA(r): \text{$\ev_X(r)$ is invertible}\} \quad\text{and}\quad \ev_X(r^{-1}) = \ev_X(r)^{-1}.$$
\end{enumerate}
Note that accordingly $r(X) \in \bigcup_{n,m\geq 1} M_{n\times m}(\bbK) \otimes_\bbK \cA$.
\end{definition}

We point out that linear pencils -- as a special instance of matrix-valued NC Polynomials -- are matrix-valued NC rational expressions. For later use, let us remark that if $\Lambda = \Lambda^{(0)} + \Lambda^{(1)} x_1 + \dots + \Lambda^{(g)} x_g$ is any linear pencil of square type, then also $\Lambda^{-1}$ is a matrix-valued NC rational expressions and we have
\begin{equation}
\dom_\cA(\Lambda^{-1}) = \{X\in\cA^g|\ \text{$\Lambda(X)$ is invertible}\}.
\end{equation}

\subsubsection{Matrices of NC Rational Expressions}
\label{subsubsec:matrices_of_rational_expressions}

Due to our convention \eqref{eq:scalar_matrices_commute_with_indeterminates}, \textbf{matrices of NC rational expressions}\index{rational expression!matrix of} constitute an important subclass of all matrix-valued NC rational expressions. Indeed, if
$$\ur=(r_{ij})_{\substack{i=1,\dots,d_1\\ j=1,\dots,d_2}}$$
is any $d_1\times d_2$-matrix of NC rational expressions $r_{ij}$ in the variables $x=(x_1,\dots,x_g)$, we may write
$$\ur = \sum_{\substack{i=1,\dots,d_1\\ j=1,\dots,d_2}} E_{ij} r_{ij},$$
where the $E_{ij}$'s denote the canonical matrix units in $M_{d_1 \times d_2}(\bbK)$. Accordingly, for such $\ur$, we have for each unital algebra $\cA$ over $\bbK$ that
$$\dom_\cA(\ur) = \bigcap_{\substack{i=1,\dots,d_1\\ j=1,\dots,d_2}} \dom_\cA(r_{ij})$$
and for each point $(X_1,\dots,X_g) \in \dom_\cA(\ur)$ that
$$\ur(X_1,\dots,X_g) = \sum_{\substack{i=1,\dots,d_1\\ j=1,\dots,d_2}} E_{ij} \otimes r_{ij}(X).$$

\subsubsection{Equivalence Classes and Matrix-valued NC Rational Functions}
\label{subsubsec:matrix_rational_functions}

Two matrix-valued NC rational expressions $r_1$ and $r_2$ are called \textbf{$\bbM(\bbK)$-evaluation equivalent}\index{rational expression!matrix-valued!$\bbM(\bbK)$-equivalent} or simply \df{matrix-equivalent} provided they 
take the same values on their common matrix domain.
A matrix-valued NC rational expression $r$ is \textbf{non-degenerate}\index{rational expression!matrix-valued!non-degenerate}, if $\dom_{\bbM(\bbK)}(r) \neq \emptyset$ holds.
Like in the scalar-valued case discussed in Section \ref{sec:evalequiv}, we may define a \textbf{matrix-valued NC rational function}\index{NC rational function!matrix-valued} to be an equivalence class of non-degenerate matrix-valued NC rational expressions with respect to $\bbM(\bbK)$-evaluation equivalence.
Accordingly, we may introduce a \textbf{regular matrix-valued NC rational function}\index{NC rational function!matrix-valued!regular} as an equivalence class of regular matrix-valued NC rational expressions with respect to $\bbM(\bbK)$-evaluation equivalence.

In particular,
the definition of a regular NC rational function is now amended to mean $1\times 1$ matrix-valued
expressions regular at $0$.
We shall use the phrase
\textbf{scalar regular NC rational expression}\index{NC rational expression!scalar}
if we want to emphasize the absence of  matrix constructions.
Often when the context makes the usage clear
we drop adjectives such as scalar, $1\times 1$,
matrix rational, matrix of rational and the like.
Indeed, it is shown in Appendix A.4 of \cite{HMV06}
that a regular $m_1\times m_2$-matrix valued NC
rational function is in fact the same as a
\textbf{$m_1\times m_2$ matrix of regular NC rational functions}\index{NC rational function!regular!matrix of}, and furthermore,
any regular matrix-valued NC rational function can be a
represented by a matrix
of regular scalar-valued NC rational expressions ``near'' any point in its domain.

\begin{example}
\label{ex:matrat} 
Consider two $2 \times 2$ matrices of NC rational expressions
$$
 m(x) :=
 \begin{pmatrix}
 1-x_1 & -x_2 \\ -x_2 & 1-x_1
 \end{pmatrix}
\qquad\text{and}\qquad 
 w(x) :=
 \begin{pmatrix}
 w_{11} & w_{12} \\
 w_{21} & w_{22}
 \end{pmatrix}, 
$$
where the latter has the following entries: 
\begin{align*}
w_{11}&={\left (1-\ {x_1}\right )}^{-1}+{\left (1-{x_1}\right )}^{-1}
{x_2}{ \left ((1-{x_1})-{x_2}{ \left 
(1-{x_1} \right )}^{-1}{x_2} \right )}^{-1}
{x_2}{ \left (1-{x_1} \right )}^{-1}
\\
w_{12}&={ \left (1-{x_1} \right )}^{-1}{x_2} { \left((1-{x_1})-{x_2} { \left (1-{x_1} \right)}^{-1} {x_2} \right )}^{-1}
\\
w_{21} &= 
{\left ((1-{x_1})-{x_2} {\left (1-{x_1}\right )}^{-1}
{x_2}\right )}^{-1} {x_2} {\left (1-{x_1} \right )}^{-1}
\\
w_{22} &= { \left ((1-{x_1})-{x_2} { \left (1-{x_1} \right 
)}^{-1} {x_2} \right )}^{-1}
\end{align*}
If we substitute for $x$ a matrix tuple 
$X \in M_n(\bbK)^g$ that belongs to the $M_n(\bbK)$-domain of all of these,
then it is an easy (and standard) computation to see that
$w(X) = \ m(X)^{-1}$.
This means that the clearly non-degenerate matrix-valued NC rational expressions $w$ and $m^{-1}$ are $\bbM(\bbK)$-evaluation equivalent. Thus, if $\mathfrak{w}$ and $\mathfrak{m}$ denote the matrix-valued NC rational functions associated to $w$ and $m$, respectively, we have that $\mathfrak{w} = \mathfrak{m}^{-1}$.
Evaluation on general algebras is more subtle; we will come back to this issue in Section \ref{sec:evaluations}.
\end{example}

\subsubsection{Symmetric Matrix-valued NC Rational Expressions}
\label{subsubsec:symmetric_mat-val_rational_expressions}

Let $r$ be any square matrix-valued NC rational expression in the variables $x=(x_1,\dots,x_g)$.
If $\cA$ is a unital $\ast$-algebra, we denote by $\dom_\cA^\sa(r)$ the subset of $\dom_\cA(r)$ that consists of all symmetric (respectively selfadjoint) points $X=(X_1,\dots,X_g) \in \dom_\cA(r)$, i.e., $X = X^\transpose$ (respectively $X=X^\ast$), where $X^\transpose:=(X_1^\transpose,\dots,X_g^\transpose)$ (respectively $X^\ast := (X_1^\ast, \dots, X_g^\ast)$).

  A square matrix-valued NC rational expression $r$
  is called \textbf{symmetric}\index{rational expression!matrix-valued!symmetric} (respectively \textbf{selfadjoint}\index{rational expression!matrix-valued!selfadjoint})
  if $r(X^\transpose) = r(X)^\transpose$ (respectively $r(X^\ast) = r(X)^\ast$) holds for each $X$ in $\dom_\cA^\sa(r)$, whenever $\cA$ is unital real (respectively complex) $\ast$-algebra. Note that this definition slightly differs from the usual terminology used for instance in \cite{HMV06} as $\cA$ runs here over all $\ast$-algebras and not only over matrix algebras.
  
Accordingly, a square matrix $\ur=(r_{ij})_{i,j=1,\dots,d}$ of NC rational expressions is called \textbf{symmetric}\index{rational expression!matrix of!symmetric} (respectively \textbf{selfadjoint}\index{rational expression!matrix of!selfadjoint}), if on any unital real (respectively complex) $\ast$-algebra $\cA$ the condition $\ur(X)^\ast = \ur(X)$ (respectively $\ur(X)^\transpose = \ur(X)$) holds at any point $X\in\dom^\sa_\cA(\ur)$, where
$$\dom_\cA^\sa(\ur) = \bigcap_{i,j=1,\dots,d} \dom_\cA^\sa(r_{ij}).$$

\begin{rem}\label{rem:rational_expression_adjoint}
If $r$ is any matrix-valued NC rational expression, there exists another matrix-valued NC rational expression $r^\transpose$, such that $\dom_{\bbM(\bbR)}(r^\transpose) = \dom_{\bbM(\bbR)}(r)$ and $r(X)^\transpose = r^\transpose(X^\transpose)$ for all $X\in\dom_{\bbM(\bbR)}(r)$.
This matrix-valued NC rational expression $r^\transpose$ is uniquely determined; in fact, in can be constructed by applying successively the following rules:
\begin{itemize}
 \item we assume that all variables $x_1,\dots,x_g$ are symmetric, i.e. $x_j^\transpose = x_j$ for $j=1,\dots,g$, and that $\transpose$ acts on scalar matrices like the usual transposition;
 \item we impose on the mapping $r\mapsto r^\transpose$ the condition that $(r_1 + r_2)^\transpose = r_1^\transpose + r_2^\transpose$, $(r_1 \cdot r_2)^\transpose = r_2^\transpose \cdot r_1^\transpose$, and $(r^{-1})^\transpose = (r^\transpose)^{-1}$ for any $r$ of square type are satisfied.
\end{itemize}
The same holds true in the complex case, with the transpose $\transpose$ replaced by the conjugate transpose $\ast$.
\end{rem}

\begin{rem}
In anticipation of the the machinery of formal linear representations that we will present in Section \ref{sec:Algorithm}, we note that if $\rho=(u,Q,v)$ is a formal linear representation of a NC rational expression $r$, then a formal linear representation of $r^\ast$ is given by $\rho^\ast = (v^\ast,Q^\ast,u^\ast)$.
In the case of rational expressions, which are regular at $0$, we can clearly use their realizations, as they will be introduced in the subsequent Section \ref{sec:Realization}, instead of formal linear representations. Indeed, if $\br(x) = D + C(J-L_A(x))^{-1}B$ is any descriptor realization of a NC rational expression $r$ which is regular at $0$, then $\br^\ast(x) = D^\ast + B^\ast (J-L_{A^\ast})^{-1} C^\ast$ yields a descriptor realization of $r^\ast$.
\end{rem}

\subsection{Series Equivalence and Rational Functions}

\label{sec:series}
We conclude the parts of this paper devoted to background
on rational expressions with a brief description of
power series equivalence.
For that purpose, we restrict attention to regular rational expressions.

An example involving matrix-valued expressions that are realizations will be given in Remark \ref{rem:realizations_series}.

We shall consider
 formal power series expansions 
$$
    \sum_{w \in {\WOR}_g} r_w x^w
$$
of NC rational expressions around $0$. As an example, consider
the operation of inverting a polynomial. If $p$ is a NC polynomial
and $p(0)\ne 0$, write  $ p= p(0) - q$ where $q(0)=0$, then the
inverse $p^{-1}$ is the series expansion $r= \frac{1}{p(0)} \sum_k
(\frac{q}{p(0)})^k$.
Clearly, taking successive products, sums and inverses allows us
to obtain a NC formal power series expansion for any regular NC rational
expression.

We say that two regular NC rational expressions
$r_1$ and $r_2$ are
\textbf{power series equivalent}\index{rational expression!power series equivalence} rational expressions if their series
expansion around $0$ are the same.
For example, series expansion for the functions $r_1$
and $r_2$
in \eqref{eq:exr1r2}  are
\begin{equation}
\label{eq:serex}
\sum_{k = 0} x_1 (x_2 x_1)^k \ \ \ \text{and}  \ \ \
\sum_{k = 0}  (x_1 x_2)^k x_1.
\end{equation}
These are the same series, so $r_1$ and $r_2$
are power series equivalent.
It is shown in Lemma 2.2 of \cite{HMV06} that regular NC rational expressions are $\bbM(\bbK)$-evaluation equivalent if and only if they are power series equivalent in the above sense. Thus, we could alternatively define a regular NC rational function $\rf{r}$ to be an equivalence class of regular NC rational expressions with respect to power series equivalence. Accordingly, the \textbf{series expansion for $\rf{r}$}\index{NC rational function!regular!series expansion} is the series expansion of any representative.

Similar considerations hold for matrices
 of NC rational expressions.
Two regular $m_1\times m_2$ matrix-valued NC rational expressions
$r_1$ and $r_2$
each have a power series expansion around $0$
whose coefficients are $m_1\times m_2$ matrices.
These coefficients being equal 
define power series equivalence of $r_1$ and $r_2$,
thereby determining equivalence classes which characterizes regular matrix-valued NC rational functions.
In Proposition A.7 of \cite{HMV06} it is proved that power series equivalence and 
$\bbM(\RR)$-equivalence are the same for matrices of regular rational expressions.


\section{Realizations of NC Rational Expressions and Functions}
\label{sec:Realization}

This section begins with a review of the classical
theory of descriptor realizations for regular NC rational
functions tailored to future needs.
See the book \cite{BR84} for a more complete
exposition and the papers
\cite{B01} \cite{BMG05} for recent developments.
 From the existence of descriptor realizations,
a natural argument shows
that symmetric NC rational functions have
symmetric descriptor realizations. The section
finishes with uniqueness
results for symmetric descriptor realizations.

\textbf{Be aware that the ``NC'' will be suppressed from now in any term like ``NC rational expressions'', since we will work here solely in the noncommutative setting.}

\subsection{Descriptor Realizations}
  \label{subsec:descriptorrealizations}
  
Define a \textbf{descriptor realization of size $d_1 \times d_2$}\index{descriptor realization} to be a regular matrix-valued rational expression of the form 
\begin{equation}
\label{eq:descrN}
   \br = \br(x) = D + C ( J -  L_A(x) )^{-1} B \quad \text{with} \quad  L_A(x) = A_1 x_1 + \dots + A_g x_g
\end{equation}
where $A_j \in \bbR^\dxd$ for $j=1, \ldots , g$, $D \in M_{d_1\times d_2}(\bbR)$, $C \in M_{d_1 \times d}(\bbR)$ and $B \in M_{d \times d_2}(\bbR)$. Here $J$ denotes a $d \times d$ signature matrix,
namely, $J=J^{\transpose}$ and $J^2=I$. Here $d$ is called the dimension of the state space of the realization. We emphasize that at this point the $A_j$ are not required to be symmetric.

The same terminology is used in the complex case.

If $\br$ is a descriptor realization like in \eqref{eq:descrN}, seen as a matrix-valued rational expression, then Definition \ref{def:ev_dom-matval} says that its $\cA$-domain for any unital algebra $\cA$ over $\bbK$ is given by
\begin{equation}
\label{eq:dom_descriptor_realization}
\begin{aligned}
\dom_\cA(\br) &= \dom_\cA((J -  L_A(x))^{-1})\\
              &= \{X\in \cA^g|\ \text{$J \otimes 1_{\cA} - L_A(X)$ is invertible in $M_d(\bbK) \otimes \cA$}\}.
\end{aligned}              
\end{equation}
The tensor product notation (already used in $L_A(X)$) provides
a convenient way of expressing the evaluation
\begin{equation}
\label{eq:eval_descriptor_realization}
\br(X) = D \otimes 1_{\cA} + (C \otimes 1_{\cA})
[ J \otimes 1_{\cA} - L_A(X) ]^{-1} (B \otimes 1_{\cA})
\end{equation}
at $X\in\dom_\cA(\br)$.

In the following, we call a descriptor realization $\br$ like in \eqref{eq:descrN}
\begin{itemize}
 \item a \textbf{descriptor realization of the regular matrix rational function $\fr$}\index{descriptor realization!of a matrix rational function}, if $\fr$ is represented in the sense of Section \ref{subsubsec:matrix_rational_functions} by the regular matrix-valued rational expression $\br$.
 \item a \textbf{descriptor realization of the matrix $\ur$ of regular rational expressions}\index{descriptor realization!of a matrix of NC rational expressions} if $\ur$ and $\br$ are \textbf{$\bbM(\bbR)$-evaluation equivalent}\index{matrix-equivalent} in the sense that we have $$\ur(X) = \br(X) \qquad\text{for each $X \in \dom_{\bbM(\bbR)}(\ur) \cap \dom_{\bbM(\bbR)}(\br)$},$$ where we put $$\dom_{\bbM(\bbR)}(\ur) := \coprod_{n=1}^\infty \dom_{M_n(\bbR)}(\ur) \qquad\text{and}\qquad \dom_{\bbM(\bbR)}(\br) := \coprod_{n=1}^\infty \dom_{M_n(\bbR)}(\br).$$ 
\end{itemize}

A \df{symmetric descriptor realization} is a descriptor realization with
$$
  D=D^{\transpose}, \ \ B=C^{\transpose}, \ \ \text{$J$ and the $A_j$ are symmetric matrices.}
$$

Clearly, the rational function $\rf{r}$ corresponding to a symmetric
descriptor realization is a symmetric rational function.

A descriptor realization is called \textbf{monic}\index{descriptor realization!monic} provided $J=I$.

Associated to \eqref{eq:descrN}, we often consider its \textbf{Sys matrix}\index{descriptor realization!Sys matrix}, which is the (affine) linear pencil given by
\begin{equation}
\label{eq:linMat}
\Sys (J; A, B, C , D)(x) \ :=   
\bmat
 J - A_1 x_1 - \dots - A_g x_g   &    B\\
 C                               &    -D
 \emat.
\end{equation}
We notice that $\Sys(J; A, B, C, D)$ can be transformed into the matrix
$$\bmat D & C\\ B & -(J - A_1 x_1 - \dots - A_g x_g) \emat = - \Pi_1 \Sys(J; A, B, C, D) \Pi_2, \quad \text{where}\quad \Pi_i := \bmat 0 & I_{d}\\ -I_{d_i} & 0\emat,$$
whose Schur complement is then $\br$, for $\br$ being the descriptor realization given in \eqref{eq:descrN}; in this form, the Sys matrix will appear also in Theorem \ref{thm:mainrep2}, more precisely in \eqref{eq:shifted_linearization}.

Of course, one can write \eqref{eq:descrN} also in \textbf{monic form}\index{descriptor realization!monic form},
namely
\begin{equation}
\label{eq:descrNJ}
   \dr{r} = D +  C ( I -  L_{JA}(x) )^{-1} JB,
\end{equation}
where we abbreviate the associated Sys matrix $\Sys(I; JA, JB, C, D)$ to $\Sys(JA, JB, C, D)$.

Let us point out that, while we considered here primarily the real case, the above definitions clearly make sense also in the complex case. Thus, we will use below the terminology of descriptor realizations for both the real and the complex situation.

\subsubsection{Examples}

\begin{example}
  \label{ex:descriptor}
   Here is an example of a $1\times 1$
   rational expression in two variables
  obtained as a descriptor realization.
\begin{equation*}
   \dr{r}= \begin{pmatrix} 1 & 0 \end{pmatrix}
     \left ( I- \begin{pmatrix} 1& 0\\ 0& 1 \end{pmatrix} x_1 -
\begin{pmatrix} 0& 1 \\ 1 & 0\end{pmatrix}x_2\right )^{-1}
    \begin{pmatrix} 1 \\ 0 \end{pmatrix}
    = \begin{pmatrix} 1 & 0 \end{pmatrix} \begin{pmatrix} 1-x_1 &
-x_2 \\ -x_2 & 1-x_1\end{pmatrix}^{-1}
      \begin{pmatrix} 1 \\ 0 \end{pmatrix}.
\end{equation*} From 
Example \ref{ex:matrat} we see that
an symmetric rational expression representing
the same rational function as $\dr{r}$
is
$$
 w_{11}= (1-x_1)^{-1} + (1-x_1)^{-1}x_2 \left (
(1-x_1)-x_2(1-x_1)^{-1}x_2\right)^{-1}x_2(1-x_1)^{-1}.
$$
\end{example}

\begin{rem}\label{rem:realizations_series}
In view of Section \ref{sec:series}, computing the formal power series expansion, and thus
the equivalence class (rational function) to which a given descriptor realization belongs, is straightforward. Indeed, if we suppose that $\br$ is of the form \eqref{eq:descrN}, we first bring it to monic form as in \eqref{eq:descrNJ} and then observe that
\begin{equation*}
\dr{r} = C (I - J L_A(x))^{-1} JB \ \sim \ \sum_{n\ge 0} C (J L_A(x))^n J B = C^JB + \sum_{j=1}^g C J A_j J B x_j + \dots .
\end{equation*}
Note that this uses $A_j x_j JB = A_j JB x_j$ and hence the convention \eqref{eq:scalar_matrices_commute_with_indeterminates} we agreed upon in Section \ref{subsec:NC_Linear_Pencils}.
\end{rem}

\begin{example}
  \label{ex:descriptor2}
We return to the descriptor realization in
Example \ref{ex:descriptor}.

Note it is straightforward
to compute the power series expansion.
Also
the domain $\dom_{M_n(\bbK)}(\br)$ of the rational expression $\br$ consists
by definition
exactly of those
$X=(X_1,X_2) \in M_n(\bbK)^2$, for
which
$$
   \begin{pmatrix} I-X_1 & -X_2 \\ -X_2 & I-X_1 \end{pmatrix}
$$
is invertible; for such $X$
$$
  \br(X) =
\begin{pmatrix} I & 0 \end{pmatrix}
      \begin{pmatrix} I-X_1 & -X_2 \\ -X_2 & I-X_1 \end{pmatrix}^{-1}
    \begin{pmatrix} I \\ 0 \end{pmatrix}.
$$
\end{example}

\subsubsection{Minimality}

A descriptor realization $\br$ of the form \eqref{eq:descrN} (with matrices over the field $\bbK$ of real or complex numbers) is
\df{controllable} if the \df{controllable space} defined by
$$
\cS :=  \mbox{span} \; \{  \mbox{Range} \ (JA)^w JB : \mbox{all words} \ w \in \ \cW_g \}
$$
is all of $\mathbb \bbK^d$.
It is \df{observable} provided the \df{unobservable space} 
$$
\cQ := \{ v \in \bbK^d :   C (JA)^w v = 0 \  \mbox{for all words} \ w   \in \ \cW_g  \}  $$
is $\{0\}$.
An important property is that
both spaces are invariant under $JA_i$ for
each $i$.
Observability can be expressed 
as controllability for the transpose system,
since 
$$
\cQ^\perp = \mbox{span} \; \{  \mbox{Range} \ ((JA)^w)^{\transpose} C^{\transpose} : \mbox{all words} \ w \in \ \cW_g \}.
$$
Likewise, controllability is the same
as observability for the transpose system.
We say that the descriptor realization is \df{minimal} 
if it is both
observable and controllable. 
We emphasize that since the system has finite dimensional
``statespace'' $\bbK^d$,
only finitely many words $w \in \cW_g$
are needed in the formulas 
to produce $\cS$ and $\cQ$.

\subsection{Properties of Descriptor Realizations}
\label{subsec:descriptorsexist}
	
That regular rational functions regular have descriptor realizations can be found in \cite{BR84}. Moreover, as we will see in Lemma \ref{lem:symR} \eqref{lem:symR0} below, any two minimal monic descriptor realizations for the same regular rational function $\fr$ that have the same feed through term $D$, say
\begin{equation*}
\label{eq:rJisI}
  \begin{split}
    \dr{r}=&D+C(I-L_A(x))^{-1}B, \\
    \dr{\widetilde{r}}=& D +\widetilde{C}(I-L_{\widetilde{A}}(x))^{-1}\widetilde{B},
  \end{split}
\end{equation*}
    are \df{similar} in the sense that there exists an invertible matrix $S$ such that
\begin{equation}
\label{eq:sim}
      SA_j=\widetilde{A}_jS, \ \ \  SB=\widetilde{B}, \ \ \ C=\widetilde{C}S.
\end{equation}
  The $S$ is known as a \df{similarity transform}.

   Lemma \ref{lem:symR} also exploits the symmetry implicit
   in a symmetric regular rational function to show,
   by appropriate choice of similarity transform,
   that any symmetric regular rational function $\rf{r}$ has a
   minimal descriptor realization that is symmetric.

\begin{lem} [Lemma 4.1 \cite{HMV06}]
  \label{lem:symR}\
	
\begin{enumerate}
  \item
   \label{lem:symR0}
    \begin{itemize}
     \item[(a)] Any descriptor realization is (more precisely, determines) a matrix-valued rational function which is regular at $0$. Conversely, each $d_1 \times d_2$ matrix-valued rational function $\rf{r}$ regular at $0$, has a minimal descriptor realization (which could be taken to be monic) with $0$ feed through term ($D=0$).
      \item[(b)] Moreover, any two minimal descriptor realizations for $\rf{r}$ with the same feed through term are similar via a unique similarity transform. 
      \item[(c)] A descriptor realization $\br$ for $\fr$ whose state space has the smallest possible dimension is minimal. Conversely, a minimal descriptor realization $\br$ of $\fr$ has smallest possible state space dimension among all descriptor realizations of $\fr$ that have the same feed through term as $\br$.
     \end{itemize}
  \item
   \label{lem:symR1} Any matrix-valued rational function regular at $0$ with a symmetric descriptor realization is a symmetric rational function.
  \item
    \label{lem:symR2} If $\rf{r}$ is a symmetric matrix-valued rational function regular at $0$, then it has a minimal descriptor realization that is symmetric.
\end{enumerate}
\end{lem}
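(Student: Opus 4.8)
The plan is to establish the three parts more or less in the order stated, building the similarity theory first and then reading off symmetry as a consequence. For part~(1)(a), the forward direction is immediate: a descriptor realization $\br = D + C(J - L_A(x))^{-1}B$ is by construction a matrix-valued rational expression which is regular at $0$, since $J - L_A(0) = J$ is invertible (as $J^2 = I$), and its power series expansion around $0$ is computed exactly as in Remark~\ref{rem:realizations_series}. For the converse — existence of a minimal monic realization with $D = 0$ — the standard route is via the Hankel matrix (or the shift-realization / Fliess construction): given $\fr$ regular at $0$ with coefficients $(\fr_w)_{w \in \WOR_g}$, one forms the (generalized) Hankel operator $H$ with entries indexed by pairs of words, notes that rationality forces $H$ to have finite rank $d$, factors $H = \Gamma \Omega$ through $\bbK^d$, and reads off $C$, the $A_j$'s, and $B$ from the shift structure on the column and row spaces; adjusting by the constant term $\fr_\emptyset$ lets one take $D = 0$. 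Minimality (controllability and observability) is exactly the statement that this factorization is through a space of the minimal dimension $d = \operatorname{rank} H$.

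Part~(1)(b) is the uniqueness of the similarity transform. I would argue that if $\br$ and $\widetilde\br$ are two minimal monic realizations of $\fr$ with the same $D$, then matching power series coefficients gives $C(JA)^w JB = \widetilde C(\widetilde J\widetilde A)^w \widetilde J\widetilde B$ for every word $w$ (here both $J$'s are $I$ in monic form, so this reads $CA^wB = \widetilde C\widetilde A^w\widetilde B$). Controllability of $\br$ and observability of $\widetilde\br$ then produce a well-defined linear map $S$ sending $\operatorname{Range}$ data of one system to the other; controllability/observability of the other system gives an inverse, so $S$ is invertible, and tracking the shift action yields $SA_j = \widetilde A_j S$, $SB = \widetilde B$, $C = \widetilde C S$. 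Uniqueness of $S$ follows because on the controllable space (all of $\bbK^d$) $S$ is forced by its values on $A^w B$. Part~(1)(c) is then a routine dimension-counting consequence: a non-minimal realization restricts to its controllable/observable quotient, strictly decreasing $d$, so smallest dimension forces minimality, and conversely all minimal realizations with a fixed $D$ are similar hence of equal dimension.

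Part~(2) is immediate from Lemma~\ref{lem:symR1}-type reasoning already sketched in the text: if $\br$ is symmetric, i.e.\ $D = D^\transpose$, $B = C^\transpose$, and $J$, $A_j$ symmetric, then transposing the evaluation formula~\eqref{eq:eval_descriptor_realization} and using $(J - L_A(X))^\transpose = J - L_A(X^\transpose)$ on symmetric $X$ shows $\br(X^\transpose) = \br(X)^\transpose$, so the represented rational function is symmetric. The real content is part~(3): given a symmetric regular $\fr$, start from \emph{any} minimal monic realization $\br = C(I - L_A(x))^{-1}B$ with $D$ symmetric. Because $\fr$ is symmetric, the transposed realization $\br^\transpose(x) = B^\transpose(I - L_{A^\transpose}(x))^{-1}C^\transpose$ (see the adjoint remark following Remark~\ref{rem:rational_expression_adjoint}) is also a minimal monic realization of the \emph{same} $\fr$ with the same feed-through term $D^\transpose = D$. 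By part~(1)(b) there is a unique invertible $S$ with $SA_j = A_j^\transpose S$, $SB = C^\transpose$, $CS = B^\transpose$ — wait, more precisely $S A_j = A_j^\transpose S$, $SB = C^\transpose$, $C = B^\transpose S$. Applying the uniqueness to $S^\transpose$ (which also intertwines, since transposing the three relations gives a similarity in the other direction) forces $S^\transpose = S$, so $S$ is symmetric and invertible. The hard part is now the congruence/square-root step: one wants to write $S = R^\transpose R$ or, when $S$ is symmetric but not definite, $S = R^\transpose \Sigma R$ with $\Sigma$ a signature matrix, and then conjugate the original realization by $R$. Conjugating by $R$ and absorbing $\Sigma$ into a new $J$ turns $(J', A_j', B', C')$ into a symmetric realization: the intertwining $SA_j = A_j^\transpose S$ becomes, after substituting $S = R^\transpose \Sigma R$, exactly the statement that $R A_j R^{-1}$ is $\Sigma$-symmetric, i.e.\ $J'$-symmetric with $J' = \Sigma$, and similarly $R B = \Sigma (RB)$-compatible forces $B' = (C')^\transpose$ with respect to the new $J'$. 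This is where minimality is essential — without it $S$ need not be symmetric or even uniquely determined, so the congruence trick has nothing to grab onto. I expect this congruence normalization (handling the possibly indefinite symmetric $S$, and checking that absorbing $\Sigma$ genuinely lands one in the symmetric-realization normal form with $J' = J'^\transpose$, $J'^2 = I$) to be the main obstacle; everything else is bookkeeping with power-series coefficients and the Hankel factorization.
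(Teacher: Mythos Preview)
The paper does not actually prove this lemma; it is stated with the attribution ``[Lemma 4.1 \cite{HMV06}]'' and no proof is given in the text. The only hints the paper provides are the remark following Section~\ref{sec:uniqdescr} that the symmetrization construction underlying part~\eqref{lem:symR2} is carried out in Lemma~4.2 of \cite{HMV06}, and the proof of Proposition~\ref{prop:uniquedescriptor}, which exercises the same similarity machinery you invoke in part~(1)(b).

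That said, your plan is correct and is essentially the standard argument from \cite{HMV06}. Parts~(1) and~(2) are straightforward as you describe. For part~(3), your strategy---transpose the minimal monic realization, invoke uniqueness of the similarity $S$ to force $S = S^\transpose$, then factor $S = R^\transpose J R$ with $J$ a signature matrix and conjugate by $R$---is exactly right. The bookkeeping you flag as the obstacle does work out cleanly: with $A'_j := R A_j R^{-1}$, $B' := RB$, $C' := CR^{-1}$, the intertwining relations give $J A'_j = (A'_j)^\transpose J$ and $J B' = (C')^\transpose$, so setting $\widetilde J := J$, $\widetilde A_j := J A'_j$ (now symmetric), and $\widetilde C := C'$ yields $D + \widetilde C(\widetilde J - L_{\widetilde A}(x))^{-1}\widetilde C^\transpose$ as a genuine symmetric descriptor realization, minimal because it is similar to the original. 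One small point to be careful about: your relations ``$SB = C^\transpose$, $C = B^\transpose S$'' must be mutually consistent, and indeed they are once $S = S^\transpose$ is established, but you should verify $S^\transpose = S$ \emph{before} using both, by checking that $S^\transpose$ satisfies the same three similarity relations as $S$ and invoking uniqueness.
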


Lemma \ref{lem:symR} \eqref{lem:symR0} (b) is often called the \df{state space similarity theorem}.

\subsubsection{Cutting down to get a minimal system}
\label{sec:cutdown}

A construction from classical one variable system theory,
which dates back at least to Kalman \cite{K63},
also works well in this much more general context,
cf. \cite{CR}, \cite{BMG05}.
It is that of cutting down the descriptor realization $\br=D+C(J-L_A(x))^{-1}B$ 
of a regular matrix-valued rational expression $r$ to controllability and observability spaces
thereby obtaining 
a minimal realization:

We first write the descriptor realization $\br=D+C(J-L_A(x))^{-1}B$ of $r$ in monic form $\br = D +  C ( I -  L_{JA}(x) )^{-1} JB$ according to \eqref{eq:descrNJ}. By the cutting down to the controllability space
$\cS$
we get 
a new realization $\hA, JB, \hC, D$
whose state space is $\cS$
with
$$ \hA_i =  {(JA_i) }_{|_\cS} \qquad\text{and}\qquad \hC:  = C_{|_\cS}.
$$
Thus the system $ JA, JB,C,D$
has the following block decomposition with respect to the 
subspace decomposition
$ \bbK^d= \cS + \cS^\perp$
$$	 JA =
\begin{pmatrix}
	 \hA & 	 A_{12}  \\	 0  &	 A_{22}
\end{pmatrix}	 
\qquad \text{and} \qquad
C = \bmat
\hC & C_2
\emat.
$$ 
While the system $\hA, JB, \hC, D$ represents the
same rational function as the original system,
it may not be observable.
However we can repeat the dual of this construction
on $\hA, JB, \hC, D$ and decompose
$\cS = \hat\cQ + \hat \cQ^\perp$. 
This results in a
minimal monic descriptor
 system $\check A, \check B, \check C, D$
which represents the same rational function
(not necessarily the same rational expression)
as $JA$ and it also yields a block decomposition of the original system. We summarize these observations in the next lemma.

\begin{lem}
\label{lem:cutdown0}
Let $\br=D+C(J-L_A(x))^{-1}B$ be any descriptor realization of a regular matrix-valued rational expression $r$.
With respect to the subspace decomposition
$ \hat\cQ^\perp +  \; \hat \cQ + \; \cS^\perp$
of $\bbK^d$, the monic system $JA, JB, C,D $ for $r$ has the block decomposition
\begin{equation}
\label{eq:basiccut}
 JA =
\begin{pmatrix}
\hat A_{11}&     \hat A_{12} &      A_{12}^1  \\
0 &     \check A &      A_{12}^2  \\
0 & 0  &     A_{22}
\end{pmatrix},
\qquad
JB=
\bmat
\hat B_1 \\
\check B
\\
0
\emat,
\quad
C = \bmat  0  & \check C  &  C_2
\emat,
\end{equation}
where the monic system $\check A, \check B, \check C, D$ provides a minimal descriptor realization for $r$.
\end{lem}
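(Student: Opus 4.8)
The plan is to carry out explicitly the two successive cut-downs sketched in Section~\ref{sec:cutdown} and to check that the second one does not undo the controllability achieved by the first. Throughout I would first pass to the monic form $\br = D + C(I - L_{JA}(x))^{-1}(JB)$ as in \eqref{eq:descrNJ} and work with the monic system $(JA, JB, C, D)$; by Remark~\ref{rem:realizations_series} its power series expansion around $0$ is $\sum_{w\in\cW_g} C(JA)^w(JB)\,x^w$ (with $D$ absorbed into the constant term), and, since two regular matrix-valued rational expressions agreeing as power series are $\bbM(\bbK)$-equivalent (Section~\ref{sec:series}), every claim of the form ``represents $r$'' can be verified at the level of these coefficients.

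First I would perform the controllability cut-down. The space $\cS$ is invariant under each $JA_i$ and contains $\operatorname{Range}(JB)$, so with respect to $\bbK^d = \cS \oplus \cS^\perp$ (with $\cS$ listed first) we get $JA = \bigl(\begin{smallmatrix}\hA & A_{12}\\ 0 & A_{22}\end{smallmatrix}\bigr)$, $JB = \bigl(\begin{smallmatrix}\hB\\ 0\end{smallmatrix}\bigr)$ and $C = \bigl(\begin{smallmatrix}\hC & C_2\end{smallmatrix}\bigr)$, and a one-line block computation gives $C(JA)^w(JB) = \hC\,\hA^w\,\hB$ for every word $w$; hence $\hat\br := D + \hC(I - L_{\hA})^{-1}\hB$ represents $r$ as well, and the same identity shows $\operatorname{span}\{\operatorname{Range}\hA^w\hB : w\in\cW_g\} = \cS$, i.e.\ $(\hA,\hB)$ is controllable. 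Next I would apply to $(\hA,\hB,\hC,D)$ the dual (observability) cut-down. Let $\hat\cQ^\perp$ --- in the notation of the lemma --- denote the unobservable subspace $\{v\in\cS : \hC\,\hA^w v = 0 \text{ for all } w\}$; it is invariant under each $\hA_i$ and contained in $\ker\hC$. Let $\hat\cQ$ be its orthogonal complement inside $\cS$. With respect to $\bbK^d = \hat\cQ^\perp \oplus \hat\cQ \oplus \cS^\perp$, the invariance of $\cS$ and of $\hat\cQ^\perp$ under $JA$, together with $\operatorname{Range}(JB)\subseteq\cS$ and $\hat\cQ^\perp\subseteq\ker C$, force exactly the zero blocks of \eqref{eq:basiccut}, with $\check A$ the $(2,2)$-block of $\hA$, $\check B$ the $\hat\cQ$-component of $JB$, and $\check C = \hC|_{\hat\cQ}$. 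Since each $\hA_i$ is block upper triangular with $(2,2)$-block $\check A_i$, one has $P_{\hat\cQ}(\hA_i v) = \check A_i\,P_{\hat\cQ}(v)$ for every $v$, and iterating this yields $P_{\hat\cQ}(\hA^w\hB) = \check A^w\check B$ and $\hC\,\hA^w\,\hB = \check C\,\check A^w\,\check B$ for all $w$. Thus $\check\br := D + \check C(I - L_{\check A})^{-1}\check B$ has the same power series as $\hat\br$ and so represents $r$.

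Finally I would verify that $(\check A,\check B,\check C,D)$ is minimal. Observability is immediate: if $v\in\hat\cQ$ satisfies $\check C\check A^w v = 0$ for all $w$, the identity above gives $\hC\,\hA^w v = 0$ for all $w$, so $v\in\hat\cQ^\perp$, and hence $v\in\hat\cQ\cap\hat\cQ^\perp = \{0\}$. Controllability of $(\check A,\check B)$ is the point I expect to carry the real weight, since a priori the observability cut-down could shrink the controllable subspace below the full state space; here one uses $P_{\hat\cQ}(\hA^w\hB) = \check A^w\check B$ from the previous step together with $\operatorname{span}\{\operatorname{Range}\hA^w\hB\} = \cS$, so that $\operatorname{span}\{\operatorname{Range}\check A^w\check B\} = P_{\hat\cQ}(\cS) = \hat\cQ$ (using $\hat\cQ\subseteq\cS$). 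This argument works precisely because $\hat\cQ$ was chosen as a complement of $\hat\cQ^\perp$ \emph{inside $\cS$}, not inside all of $\bbK^d$; isolating and exploiting this is the main subtlety. Assembling the three steps gives the block decomposition \eqref{eq:basiccut} with $(\check A,\check B,\check C,D)$ a minimal descriptor realization of $r$.
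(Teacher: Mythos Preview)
Your proof is correct and follows the same two-stage cut-down construction the paper sketches in Section~\ref{sec:cutdown}; the paper itself does not give a formal proof of this lemma but defers to the preceding discussion and to the references \cite{CR,BMG05}, whereas you have written out the details explicitly. In particular, your verification that the observability cut-down does not destroy controllability (via $P_{\hat\cQ}(\hA^w\hB)=\check A^w\check B$ and the fact that $\hat\cQ$ is a complement of the unobservable subspace \emph{inside} $\cS$) is exactly the point one needs, and your remark that the lemma's label $\hat\cQ^\perp$ actually denotes the unobservable subspace of the intermediate system is a correct reading of the block structure in \eqref{eq:basiccut}.
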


\begin{rem}
\label{rem:hollow}
In preparation for what comes later in Section \ref{sect:2-4}
we record an observation about the special case where
a monic system $ JA, JB, C, 0 $  is a realization of $0$, then
$C \cS =0 $ and in terms of cutdowns 
$
C = \bmat  0  & 0  &  C_2 
\emat
$.
Thus
$\Sys( JA, JB, C, 0)$ has the form
\begin{equation}
\label{eq:sysmat}
\Sys( JA, JB, C, 0) = \widetilde{\Pi}_1
\begin{pmatrix}
\Lam_{\hat A_{11} }   &	\Lam_{ \hat A_{12}} & B_1 
                &  \Lam_{ A_{12}^1 }   \\	 
0 &	\Lam_{ \check A} &  \check B &	\Lam_{ A_{12}^2  }     \\
0 & 0  &	0 & \Lam_{A_{22}  }          \\
0 & 0  & 0 & \hC 
\end{pmatrix}	 \Pi \widetilde{\Pi}_2
\end{equation}
where $\Pi$ permutes the last two columns and $\widetilde{\Pi}_i$ is the permutation $\begin{pmatrix} 0 & I_{d_i}\\ I_d & 0\end{pmatrix}$.
This is a block $4 \times 4$ matrix.
A block $n \times n$ matrix which contains a $\alpha \times \beta$ rectangle of zeroes is called \df{hollow}
(the terminology of P.M. Cohn), if $\alpha + \beta > n$. For matrix \eqref{eq:sysmat} this count is $5>4$, so it is hollow, a fact which will be used later in Section \ref{sect:2-4}.
\end{rem}

\subsubsection{Uniqueness of Symmetric Descriptor Realizations}
\label{sec:uniqdescr} 

There is a useful refinement of the state space similarity theorem, Lemma \ref{lem:symR} \eqref{lem:symR0} (b), for symmetric descriptor realizations.

\begin{prop} [Proposition 4.3 \cite{HMV06}]
  \label{prop:uniquedescriptor}
   If
  \begin{equation*}
   \dr{r}= D+ C (J-L_A(x))^{-1} C^{\transpose}
    \ \ \ \ \
     \mbox{and}
     \ \ \ \ \
    \widetilde{\dr{r}}= D+\widetilde{C}
       (\widetilde{J}-L_{\widetilde{A}}(x))^{-1} \widetilde{C}^{\transpose}
  \end{equation*}
    are both minimal 
    symmetric descriptor realizations for the same $d_1 \times d_2$ matrix of
    regular rational functions
    (with the same symmetric feed through term $D$),
    then there is an invertible
      similarity transform $S$ between the two
    systems; it satisfies   $S^{T}\widetilde{J}S=J$ and
 $$SJA_j=\tJ  \widetilde{A}_j S    \qquad    SJC^\transpose=\tJ \tC^\transpose \qquad
                                         C=\widetilde{C}S.$$

    Thus, if $J=I$, then $\wt{J}=I$
    too and $S$ is unitary.
    In particular any two monic ($J=I$) symmetric
    minimal descriptor realizations
    with the same feed through term  for the same
    matrix rational function are unitarily equivalent.
\end{prop}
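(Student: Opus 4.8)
The plan is to reduce everything to \emph{monic} descriptor realizations, extract from the state space similarity theorem (Lemma~\ref{lem:symR}~\eqref{lem:symR0}(b)) an intertwiner $S$ realizing all the asserted identities except $S^{\transpose}\widetilde{J}S=J$, and then deduce that remaining identity by showing that a suitable combination of $S$, $S^{\transpose}$, $J$ and $\widetilde{J}$ is a \emph{self}-similarity of the first realization, hence, by the uniqueness half of the same theorem, the identity matrix.

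First I would put both realizations into monic form as in \eqref{eq:descrNJ}: the first becomes $D + C\bigl(I-L_{JA}(x)\bigr)^{-1}(JC^{\transpose})$ with monic data $(JA,\,JC^{\transpose},\,C,\,D)$, and the second becomes $D + \widetilde{C}\bigl(I-L_{\widetilde{J}\widetilde{A}}(x)\bigr)^{-1}(\widetilde{J}\widetilde{C}^{\transpose})$ with monic data $(\widetilde{J}\widetilde{A},\,\widetilde{J}\widetilde{C}^{\transpose},\,\widetilde{C},\,D)$. Since the controllable space $\cS$ and the unobservable space $\cQ$ are defined through the monic data anyway, these monic realizations are again minimal; moreover they have the same feed-through term $D$ and represent the same regular matrix-valued rational function. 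Hence Lemma~\ref{lem:symR}~\eqref{lem:symR0}(b) produces a \emph{unique} invertible $S$ with
\[
  S(JA_j)=(\widetilde{J}\widetilde{A}_j)\,S,\qquad S(JC^{\transpose})=\widetilde{J}\widetilde{C}^{\transpose},\qquad C=\widetilde{C}S,
\]
which are exactly three of the four asserted identities.

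It then remains to prove $S^{\transpose}\widetilde{J}S=J$. I would set $\Sigma:=S^{\transpose}\widetilde{J}S$ (symmetric, since $\widetilde{J}^{\transpose}=\widetilde{J}$) and $V:=J\Sigma$, and claim that $V$ is a self-similarity, in the sense of \eqref{eq:sim}, of the minimal monic realization $(JA,\,JC^{\transpose},\,C,\,D)$, i.e.\ that $V(JA_j)=(JA_j)V$, $V(JC^{\transpose})=JC^{\transpose}$ and $C=CV$. Granting this, the uniqueness clause of Lemma~\ref{lem:symR}~\eqref{lem:symR0}(b), applied to this realization and itself (where the identity matrix is visibly a similarity), forces $V=I$, and since $J^{2}=I$ this is precisely $S^{\transpose}\widetilde{J}S=J$. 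Verifying the claim is where the symmetry hypotheses $A_j=A_j^{\transpose}$, $\widetilde{A}_j=\widetilde{A}_j^{\transpose}$, $J=J^{\transpose}$, $\widetilde{J}=\widetilde{J}^{\transpose}$ are used: transposing the three identities above gives $(A_jJ)S^{\transpose}=S^{\transpose}(\widetilde{A}_j\widetilde{J})$, $CJS^{\transpose}=\widetilde{C}\widetilde{J}$ and $C^{\transpose}=S^{\transpose}\widetilde{C}^{\transpose}$, and then, for instance,
\[
  \Sigma JA_j=S^{\transpose}\widetilde{J}\,(SJA_j)=S^{\transpose}\widetilde{A}_j S=(A_jJS^{\transpose})\,\widetilde{J}S=A_jJ\Sigma,
\]
so $V(JA_j)=J\Sigma JA_j=JA_jJ\Sigma=(JA_j)V$; the identities $C=CV$ and $V(JC^{\transpose})=JC^{\transpose}$ come out the same way, using $\widetilde{J}^{2}=I$.

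Finally, for the monic case $J=I$: the identity $S^{\transpose}\widetilde{J}S=I$ gives $\widetilde{J}=(SS^{\transpose})^{-1}$, which is symmetric and positive definite and therefore, being also a signature matrix, equal to $I$; then $S^{\transpose}S=I$, i.e.\ $S$ is unitary, and in particular any two monic symmetric minimal descriptor realizations with the same feed-through for the same matrix rational function are unitarily equivalent. I do not expect a genuine obstacle here -- everything rests on the state space similarity theorem plus linear algebra -- the only delicate point being the bookkeeping: one must single out the correct combination $V=JS^{\transpose}\widetilde{J}S$ (rather than, say, $S^{\transpose}\widetilde{J}SJ$) and keep multiplication orders straight when transposing, the symmetry of the data being exactly what turns $(JA_j)^{\transpose}=A_jJ$ back into a product of the same matrices in reversed order.
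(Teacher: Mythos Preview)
Your argument is correct. The route you take to the key identity $S^{\transpose}\widetilde{J}S=J$ is, however, genuinely different from the paper's. The paper computes, for all words $\alpha,\beta$, the quantities
\[
C J (AJ)^{\beta^{\transpose}}\bigl(S^{\transpose}\widetilde{J}S\bigr)(JA)^{\alpha}JC^{\transpose}
\]
and, using the equality of power series coefficients $C(JA)^{w}JC^{\transpose}=\widetilde{C}(\widetilde{J}\widetilde{A})^{w}\widetilde{J}\widetilde{C}^{\transpose}$, shows they equal $C J (AJ)^{\beta^{\transpose}}\,J\,(JA)^{\alpha}JC^{\transpose}$; controllability and observability then strip off the outer factors to give $S^{\transpose}\widetilde{J}S=J$. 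You instead package the same ingredients by verifying that $V=JS^{\transpose}\widetilde{J}S$ is a \emph{self}-similarity of the first monic minimal system and then invoke the \emph{uniqueness} clause of the state space similarity theorem to force $V=I$. Since that uniqueness clause is itself proved via controllability and observability, the two arguments rest on the same foundations; your version is a bit slicker in that it avoids the explicit word manipulations and the separate appeal to power series equivalence, while the paper's version makes the role of the moment identities more visible.
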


\begin{proof}
We shall recall the proof from \cite{HMV06}, since Proposition 4.3 there was only stated for noncommutative scalar expressions. However, as we now see, the proof works for matrix rational expressions.
First put $  \dr{r}$ and $\widetilde{  \dr{r}}$
in the  form \eqref{eq:rJisI} by multiplying appropriately
by $J$ and $\tJ$ respectively.
  Since both $\dr{r}$ and $\widetilde{\dr{r}}$ represent
  the same rational function
   (and share the feed through term $D$).
  From controllability and observability (from the state space
  similarity theorem)
  we know that there is an invertible 
  similarity transform  $S$; it satisfies \eqref{eq:sim}.
  Thus
  $$SJA_j=\widetilde{J} \widetilde{A}_j S \qquad 
  SJC^\transpose=\widetilde{J}\widetilde{C}^\transpose \qquad
  C=\widetilde{C}S$$
   Hence,
  $S (JA)^\alpha JC^\transpose= (\widetilde{J}\widetilde{A})^\alpha \widetilde{J}\widetilde{C}^\transpose$  and
  $C  (JA)^\alpha = 
  \tC (\widetilde{J}\widetilde{A})^\alpha S$ 
   for all words  $\alpha$.
   
   Since the $A_j$ and $\wt{A}_j$ are symmetric, it follows that
\begin{equation}
  \label{eq:ud2}
    CJ (AJ)^{\beta^\transpose} S^\transpose \wt{J} S (JA)^\alpha JC^\transpose=
   \widetilde{C}\wt{J} (\widetilde{A}\wt{J})^{\beta^\transpose} \tJ
    (J\widetilde{A})^\alpha \wt{J}\widetilde{C}^\transpose
\end{equation}
which equals
$  \tC (\tJ\tA)^{\beta^\transpose} 
    (\tJ\widetilde{A})^\alpha \tJ\widetilde{C}^\transpose
$.
The power series equivalence (see Section \ref{sec:series}) of the rational expressions
 $\dr{r}$ and $\widetilde{\dr{r}}$ implies
\begin{equation}
  \label{eq:ud1}
   C(JA)^wJC^{\transpose}=\wt{C}(\wt{J}\wt{A})^w \wt{J}\wt{C}^{\transpose}
\end{equation}
   for all words $w$.
Which we use to obtain
$
    CJ (AJ)^{(\beta^\transpose)} S^\transpose \wt{J} S (JA)^\alpha JC^\transpose=
   C (J A)^{(\beta^\transpose)} 
    (J  A)^\alpha J C^\transpose.
$
Therefore
\begin{equation}
    CJ (AJ)^{(\beta^\transpose)} \ S^\transpose \wt{J} S \ (JA)^\alpha JC^\transpose
    = C J(AJ)^{(\beta^\transpose)} \ J \
    (J  A)^\alpha J C^\transpose, so
\end{equation}
the controllability and observability 
implies $S^\transpose\tJ S= J$.
\end{proof}

Beware, if $J \not =I$ the cutdown system
$\check A, \check B, \check C, D$ 
 in  Lemma \ref{lem:cutdown0} while monic
often will not be symmetric. 
One can, however, symmetrize it as in Lemma 4.2 in Section 4.3 of \cite{HMV06}, notably without changing the size of the matrices $\check A, \check B, \check C, D$ and even without changing the feed through term $D$; this construction underlies Lemma \ref{lem:symR} \eqref{lem:symR2}.

This combines with the above to yield that minimal
realizations have maximal $\cA$-domains, as we will state formally in Lemma \ref{lem:domMin}.


\section{Explicit Algorithm for a Realization}
\label{sec:Algorithm}

Much of what is stated in Sections \ref{sec:evaluations} and \ref{sec:FP} can be understood without mastering this section.
Hence on first reading one may want to skip to Section \ref{sec:evaluations}. Of course, to understand all of the proofs, one must read Section \ref{sec:Algorithm}.

Here we present an algorithm for constructing a realization
of a rational expression $r$ while keeping a close watch
on evaluation properties of both $r$ and its realization.
We prove existence of realizations which have excellent domain
and evaluation properties with respect to any unital algebra $\cA$
without any further constraints; see Section \ref{subsec:regular_case}.

One motivation is that in our free probability applications in Section
\ref{sec:FP}
 it is important to know that the $\cA$-domain of the considered realization is not smaller than the $\cA$-domain of the rational expression we are interested in. Hence we need some control over this. By \cite{KVV09} it follows that in the case
$\cA=\bbM(\CC)$ the minimal realization has the largest possible domain. 
In Section \ref{subsec:minimality_implies_maximal_domain}, we will show the validity of this for more general $\cA$. In Section \ref{subsec:minimal_realizations}, this will be combined with the concrete realization algorithm that we present here, yielding that minimal realizations have excellent evaluation properties.

Our algorithmic construction of realizations is not restricted to the regular case, but works for any rational expression.
This means that this algorithm will also apply to the general situation of the full free field; see Sections \ref{subsec:FLR} and \ref{subsec:FLR_matval}.

Indeed, our construction is closely related to similar considerations in the context of the universal skew field of noncommutative rational functions \cite{Co71,Mal78, Co06}. 
(In the context of regular expressions this goes in principle back to the work of Kleene and Sch\"utzenberger,
c.f. \cite{K56,S61,S65}.)   
Also an algorithm for the regular case, a bit less general than here,
  appears in \cite{Sthesis} Chapter 5, and is implemented in 
  {\it NCAlgebra} a noncommutative algebra package which runs 
  under Mathematica.

The main expedience of assuming regularity is that the ``cutting down'' arguments we saw in Sections \ref{sec:cutdown} and \ref{sec:uniqdescr} behave well. Without regularity complications arise. This case is treated in \cite{Vol15};
we leave the natural question to future research, whether these tools are also suitable for our purposes like the regular ones we use here. 

In view of this, we should note that our algorithm produces realizations that have good evaluation properties but are typically not minimal; analogous cutting down arguments that work directly in our setup are still missing. 
Nevertheless, arguments not involving cutdowns work well even without assuming regularity and they can be treated without using results of \cite{Vol15}.

Below, in Section \ref{sec:evaluations}, we will see that evaluations of general realizations only behave well under some additional condition on the algebra $\cA$. We note that in the context of this section no further assumptions on $\cA$ are necessary, since this is only relevant if one is moving algebraically between different rational expressions of the same rational function; here we keep track of domains and evaluations through the construction to show that the obtained realizations produce valid identities under evaluation on any $\cA$.

\subsection{Formal Linear Representations of NC Rational Expressions}
\label{subsec:FLR}

We point out that the terminology we are going to use here is distinct from the realization language used in other parts of this paper. This marks the transition from the regular context to more algebraic considerations without regularity assumptions and allows us to distinguish properly between these two settings.

As before, we will work here with rational expressions in the variables $x=(x_1,\dots,x_g)$ over the field $\bbK$ of real or complex numbers.

\begin{definition}\label{def:rep}
Let $r$ be a rational expression in the variables $x=(x_1,\dots,x_g)$ over $\bbK$. A \textbf{formal linear representation $\rho=(u,Q,v)$ of $r$}\index{formal linear representation} consists of
\begin{itemize}
 \item an affine linear pencil $$Q = Q^{(0)} + Q^{(1)} x_1 + \dots + Q^{(g)} x_g$$ for matrices $Q^{(0)},Q^{(1)},\dots,Q^{(g)}\in M_n(\bbK)$ of some dimension $n$,
 \item a $1\times n$-matrix $u$ over $\bbK$,
 \item and a $n\times 1$-matrix $v$ over $\bbK$,
\end{itemize}
such that the following condition is satisfied:
\begin{quote}
For any unital $\bbK$-algebra $\cA$, we have that
$$\dom_\cA(r) \subseteq \dom_\cA(Q^{-1})$$
and it holds true for any $(X_1,\dots,X_g) \in \dom_\cA(r)$ that
$$r(X_1,\dots,X_g) = - u Q(X_1,\dots,X_g)^{-1} v.$$
\end{quote}
\end{definition}

The main contribution of this section is the following algorithm by which we ensure that a formal linear representation exists for any rational expression. Note that the definition of a formal linear representation requires that the domain of definition of the representation includes the domain of definition of the rational expression.
 
\begin{thm}\label{thm:existence-flr}
For each rational expression in $x=(x_1,\dots,x_g)$ over $\bbK$ (not necessarily regular at zero) there exists a formal linear representation in the sense of Definition \ref{def:rep}.
\end{thm}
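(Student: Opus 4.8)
The plan is to proceed by structural induction on the way a rational expression is built up from polynomials via the operations $+$, $\cdot$, and $\cdot^{-1}$, constructing at each stage a formal linear representation $\rho = (u,Q,v)$ whose $Q$-pencil is affine linear and whose domain condition $\dom_\cA(r) \subseteq \dom_\cA(Q^{-1})$ holds uniformly in $\cA$. The base case is a polynomial $p$. Here one uses the standard ``linearization of a polynomial'' trick: write $p$ as a word-by-word sum and realize each monomial $x_{i_1}\cdots x_{i_k}$ by a bidiagonal pencil with $x_{i_j}$'s on a superdiagonal and $-1$'s on the diagonal, so that $-uQ^{-1}v$ recovers the monomial with $Q$ being upper (or lower) triangular with $-1$'s on the diagonal — hence \emph{invertible over every unital algebra}, which gives $\dom_\cA(p) = \cA^g = \dom_\cA(Q^{-1})$ for free. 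Summing the monomial pencils into one block pencil (with shared input/output rows) handles a general polynomial; this is exactly the Schur-complement bookkeeping already recalled in Section~\ref{subsec:Schur}, since collapsing the pencil via the Schur complement formula must reproduce $p$.

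Next I would handle the inductive steps. For a \textbf{sum} $r = r_1 + r_2$ with representations $\rho_i = (u_i,Q_i,v_i)$, set $Q = \begin{pmatrix} Q_1 & 0 \\ 0 & Q_2 \end{pmatrix}$, $u = \begin{pmatrix} u_1 & u_2 \end{pmatrix}$, $v = \begin{pmatrix} v_1 \\ v_2 \end{pmatrix}$; block-diagonality gives $\dom_\cA(Q^{-1}) = \dom_\cA(Q_1^{-1}) \cap \dom_\cA(Q_2^{-1}) \supseteq \dom_\cA(r_1) \cap \dom_\cA(r_2) = \dom_\cA(r)$, and $-uQ^{-1}v = -u_1 Q_1^{-1} v_1 - u_2 Q_2^{-1} v_2 = r_1 + r_2$ on that domain. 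For a \textbf{product} $r = r_1 \cdot r_2$, the standard cascade (series) connection works: take $Q = \begin{pmatrix} Q_1 & v_1 u_2 \\ 0 & Q_2 \end{pmatrix}$ (or a variant placing the coupling block appropriately so that all entries remain affine linear in $x$ — note $v_1 u_2$ is a constant matrix, so this is fine), with $u = \begin{pmatrix} u_1 & 0 \end{pmatrix}$ and $v = \begin{pmatrix} 0 \\ v_2 \end{pmatrix}$; the block-triangular Schur complement computation, valid over any $\cA$ because $Q_2$ (and then $Q_1$) are invertible on the intersection of the two domains, yields $-uQ^{-1}v = (-u_1 Q_1^{-1} v_1)(-u_2 Q_2^{-1} v_2) = r_1 r_2$, and again $\dom_\cA(Q^{-1}) \supseteq \dom_\cA(r_1) \cap \dom_\cA(r_2)$ because invertibility of a block-triangular matrix is equivalent to invertibility of its diagonal blocks.

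The \textbf{inversion} step $r = r_1^{-1}$ is where the real care is needed. Given $\rho_1 = (u_1,Q_1,v_1)$ with $r_1 = -u_1 Q_1^{-1} v_1$, one wants a pencil whose Schur complement (after collapsing) is $(-u_1 Q_1^{-1} v_1)^{-1}$. The natural candidate is the bordered pencil $Q = \begin{pmatrix} Q_1 & v_1 \\ u_1 & 0 \end{pmatrix}$ with $u = \begin{pmatrix} 0 & 1 \end{pmatrix}$ and $v = \begin{pmatrix} 0 \\ 1 \end{pmatrix}$ (signs to be adjusted). The key point is the domain inclusion: by Definition~\ref{def:ev_dom}, $\dom_\cA(r_1^{-1}) = \{X \in \dom_\cA(r_1) : r_1(X)$ invertible$\}$; on such $X$ we have $Q_1(X)$ invertible (inductive hypothesis) and $r_1(X) = -u_1 Q_1(X)^{-1} v_1$ invertible, and the Schur complement formula of Section~\ref{subsec:Schur}, applied with the block $Q_1(X)$ as the invertible pivot, shows that $Q(X)$ is invertible precisely when its Schur complement $0 - u_1 Q_1(X)^{-1} v_1 = -r_1(X)$ is invertible — which it is. So $\dom_\cA(r_1^{-1}) \subseteq \dom_\cA(Q^{-1})$, and a second application of the Schur formula (now extracting the $(2,2)$ corner of $Q(X)^{-1}$) gives $-uQ(X)^{-1}v = -(-r_1(X))^{-1} = r_1(X)^{-1}$ after fixing signs. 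The main obstacle I anticipate is precisely getting the signs and the placement of the border consistent across all four rules so that $Q$ stays \emph{affine linear} and the identity $r = -uQ^{-1}v$ (with that specific minus sign) holds at each stage; the conventions around $\Lambda_M(x) = M^{(0)} - L_M(x)$ in \eqref{eq:Lambda-pencil} and the minus in $-uQ^{-1}v$ are there exactly to make the inversion step come out cleanly, so the bookkeeping should close up, but it is the one place where an off-by-sign error propagates. A secondary subtlety is ensuring every domain inclusion $\dom_\cA(r) \subseteq \dom_\cA(Q^{-1})$ holds for \emph{every} unital $\cA$, not just matrix algebras — but since every step above invokes only the purely algebraic Schur complement identity \eqref{eq:Schur}, which is valid over any unital $\cA$, this comes along automatically once the algebra of the four constructions is verified.
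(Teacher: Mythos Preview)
Your proposal is correct and matches the paper's approach (Algorithm~\ref{alg:explicit_rep} together with Lemmas~\ref{lem:trivial_real}--\ref{lem:inv_real}): structural induction with explicit block-pencil constructions for sum, product, and inverse, each verified by Schur-complement identities valid over any unital $\cA$. One small caution: your claim that ``invertibility of a block-triangular matrix is equivalent to invertibility of its diagonal blocks'' holds only over stably finite $\cA$ (cf.\ Lemma~\ref{lem:diagNotInv}), but you only need the forward implication (diagonal blocks invertible $\Rightarrow$ full matrix invertible), which is true over any unital algebra by writing down the inverse explicitly---so your argument goes through.
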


Later, we will also address a symmetric (resp. selfadjoint) version and even a generalization of these result for matrices of rational expressions; see, in particular, Theorem \ref{thm:rep_sa_exist} and Theorem \ref{thm:FLR_matrices_of_rational_expressions}.

The proof of Theorem \ref{thm:existence-flr} is provided by the following algorithm for producing such a formal linear representation. Recall from Section \ref{item:intorat3} that any rational expression is built from scalars $\lambda\in\bbK$ and the variables $x_1,\dots,x_g$ by applying iteratively the arithmetic operations $+$, $\cdot$, and $\cdot^{-1}$.

\begin{algorithm}\label{alg:explicit_rep}
Let $r$ be a rational expression in the variables $x=(x_1,\dots,x_g)$ over the field $\bbK$. A formal linear representation $\rho=(u,Q,v)$ of $r$ can be constructed by using successively the following rules:
\begin{itemize}
 \item[(i)] For any affine linear polynomial $$\lambda(x) = \lambda_0 + \lambda_1 x_1 + \ldots + \lambda_g x_g$$ with coefficients $\lambda_0,\lambda_1,\ldots,\lambda_g\in\bbK$, a formal linear representations is given by
\begin{equation}\label{eq:rep_init}
\rho_{\lambda(x)} := \bigg(\begin{pmatrix} 0 & 1\end{pmatrix}, \begin{pmatrix} \lambda(x) & -1\\ -1 & 0\end{pmatrix}, \begin{pmatrix} 0\\ 1 \end{pmatrix}\bigg). 
\end{equation}
respectively.
 \item[(ii)] If $\rho_1=(u_1,Q_1,v_1)$ and $\rho_2=(u_2,Q_2,v_2)$ are formal linear representations for the rational expressions $r_1$ and $r_2$, respectively, then
\begin{equation}\label{eq:rep_sum}
\rho_1 \oplus \rho_2 := \bigg(\begin{pmatrix} u_1 & u_2\end{pmatrix}, \begin{pmatrix} Q_1 & 0\\ 0 & Q_2\end{pmatrix}, \begin{pmatrix} v_1\\ v_2 \end{pmatrix}\bigg)
\end{equation}
gives a formal linear representation of $r_1 + r_2$.
 \item[(iii)] If $\rho_1=(u_1,Q_1,v_1)$ and $\rho_2=(u_2,Q_2,v_2)$ are formal linear representations for the rational expressions $r_1$ and $r_2$, respectively, then
\begin{equation}\label{eq:rep_prod}
\rho_1 \odot \rho_2 := \bigg(\begin{pmatrix} 0 & u_1\end{pmatrix}, \begin{pmatrix} v_1u_2 & Q_1\\ Q_2 & 0\end{pmatrix}, \begin{pmatrix} 0\\ v_2 \end{pmatrix}\bigg)
\end{equation}
gives a formal linear representation of $r_1 \cdot r_2$.
 \item[(iv)] If $\rho=(u,Q,v)$ is a formal linear representation of $r$, then
\begin{equation}\label{eq:rep_inv}
\rho^{-1} := \bigg(\begin{pmatrix} 1 & 0\end{pmatrix}, \begin{pmatrix} 0 & u\\ v & -Q\end{pmatrix}, \begin{pmatrix} 1\\ 0 \end{pmatrix}\bigg)
\end{equation}
gives a formal linear representation of $r^{-1}$.
\end{itemize}
\end{algorithm}

Note that the operations \eqref{eq:rep_init}, \eqref{eq:rep_sum}, \eqref{eq:rep_prod}, and \eqref{eq:rep_inv}, which we described in Algorithm \ref{alg:explicit_rep}, have to be understood on the level of linear pencils as described in Definition \ref{def:linear_pencils}.

The proof that the rules (i) -- (iv) given in Algorithm \ref{alg:explicit_rep} are indeed correct, will be given in Section \ref{subsubsec:algorithm_proof}.

\begin{example}
We consider the rational expressions
$$r_1=(x_1 x_2)^{-1} \qquad\text{and}\qquad r_2=x_2^{-1} x_1^{-1}$$
By applying Algorithm \ref{alg:explicit_rep}, we obtain for $r_1$ the formal linear representation
$$\rho_1 = \bigg(\begin{pmatrix} 1 & 0 & 0 & 0 & 0\end{pmatrix}, \begin{pmatrix} 0 & 0 & 0 & 0 & 1\\ 0 & 0 & 0 & -x_1 & 1\\ 0 & 0 & -1 & 1 & 0\\ 0 & -x_2 & 1 & 0 & 0\\ 1 & 1 & 0 & 0 & 0\end{pmatrix}, \begin{pmatrix} 1\\ 0\\ 0\\ 0\\ 0 \end{pmatrix}\bigg)$$
and for $r_2$ the formal linear representation
$$\rho_2 = \bigg(\begin{pmatrix} 0 & 0 & 0 & 1 & 0 & 0\end{pmatrix}, \begin{pmatrix} 1 & 0 & 0 & 0 & 0 & 1\\ 0 & 0 & 0 & 0 & -x_2 & 1\\ 0 & 0 & 0 & 1 & 1 & 0\\ 0 & 0 & 1 & 0 & 0 & 0\\ 0 & -x_1 & 1 & 0 & 0 & 0\\ 1 & 1 & 0 & 0 & 0 & 0\end{pmatrix}, \begin{pmatrix} 0\\ 0\\ 0\\ 1\\ 0\\ 0 \end{pmatrix}\bigg).$$

This highlights the computational disadvantage of Algorithm \ref{alg:explicit_rep}, that roughly speaking the dimension of the linear pencil $Q$ of a formal linear representation $\rho=(u,Q,v)$ increases rapidly with the complexity of the rational expression $r$ that it represents. Clearly, since the rational expressions $r_1$ and $r_2$ in the example above are rather simple, we would expect that there are other formal linear representations of smaller dimensions. Unfortunately, since $r_1$ and $r_2$ are both not regular, we cannot use the representation machinery to cut down our realizations to minimal ones. One expedient could be to use the analogous but more general machinery that was invented recently in \cite{Vol15}; we leave this to future research.
A far less sophisticated approach is the following ad hoc construction: because any formal linear representation $\rho=(u,Q,v)$ of a rational expression $r$ can clearly be transformed by
$$S \cdot \rho \cdot T := (uT, SQT, Sv),$$
for any choice of invertible matrices $S,T\in M_n(\bbK)$, into another formal linear representation of $r$, we can try, after having $\rho=(u,Q,v)$ arranged as
$$\begin{array}{c|c} & u\\ \hline v & Q\end{array},$$
to bring this array into the form
$$\begin{array}{c|c c} & \widetilde{u} & u'\\ \hline \widetilde{v} & \widetilde{Q} & 0\\ v' & 0 & Q'\end{array},$$
by acting by elementary row and column operations only on $Q$, while bookkeeping their effect in the first row and column, respectively. If it happens in this case that $(u',Q',v')$ is a formal linear representation of $0$, we can just remove this part, which means that $\widetilde{\rho} = (\widetilde{u},\widetilde{Q},\widetilde{v})$ gives another formal linear representation of $r$; however, we do not know if such a reduction is always possible, and even if this would be the case, one cannot be sure that one reaches eventually a formal linear representation of minimal size.

In our situation, we can show by using this method that
$$\widetilde{\rho}_1 = (\widetilde{u}_1,\widetilde{Q}_1,\widetilde{v}_1) = \bigg(\begin{pmatrix} 1 & 0\end{pmatrix}, \begin{pmatrix} 0 & x_1\\ x_2 & 1\end{pmatrix}, \begin{pmatrix} 1\\ 0 \end{pmatrix}\bigg)$$
gives another formal linear representation of $r_1$ and that
$$\widetilde{\rho}_2 = (\widetilde{u}_2,\widetilde{Q}_2,\widetilde{v}_2) = \bigg(\begin{pmatrix} 0 & 1\end{pmatrix}, \begin{pmatrix} 1 & -x_2\\ -x_1 & 0\end{pmatrix}, \begin{pmatrix} 0\\ 1 \end{pmatrix}\bigg)$$
gives another formal linear representation of $r_2$.

It is easy to see that the linear pencils $Q_1,Q_2$ satisfy the relation
$$Q_1 = U Q_2 U^{-1} \qquad\text{where}\qquad U := \begin{pmatrix} 0 & 1\\ -1 & 0\end{pmatrix}.$$
Thus, we have $\dom_\cA(Q_1^{-1}) = \dom_\cA(Q_2^{-1})$ for any unital algebra $\cA$, and by using the Schur complement formula, we see that $Q_1(X_1,X_2)$ (and hence $Q_2(X_1,X_2)$) is invertible in $M_2(\bbK) \otimes_\bbK \cA$ for some $(X_1,X_2)\in\cA^2$, if and only if $X_1 X_2$ is invertible in $\cA$. In other words, we have
$$\dom_\cA(r_2) \subsetneq \dom_\cA(r_1) = \dom_\cA(Q_1^{-1}) = \dom_\cA(Q_2^{-1}).$$
\end{example}

\subsubsection{Proof of Rules in Algorithm \ref{alg:explicit_rep}}
\label{subsubsec:algorithm_proof}

First of all, we examine the validity of rule (i). This is the content of the following lemma.

\begin{lem}\label{lem:trivial_real}
Consider a rational expression of the form
$$\lambda(x) = \lambda_0 + \lambda_1 x_1 + \dots + \lambda_g x_g$$
with $\lambda_0,\lambda_1,\dots,\lambda_g\in\CC$. Then a formal linear representation of $\lambda$ is given by
$$\rho_{\lambda(x)} := \bigg(\begin{pmatrix} 0 & 1\end{pmatrix}, \begin{pmatrix} \lambda(x) & -1\\ -1 & 0\end{pmatrix}, \begin{pmatrix} 0\\ 1 \end{pmatrix}\bigg).$$
\end{lem}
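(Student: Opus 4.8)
The plan is to verify directly that the triple $\rho_{\lambda(x)} = \big(\begin{pmatrix} 0 & 1\end{pmatrix}, \begin{pmatrix} \lambda(x) & -1\\ -1 & 0\end{pmatrix}, \begin{pmatrix} 0\\ 1 \end{pmatrix}\big)$ satisfies the two conditions of Definition \ref{def:rep}: that on every unital $\bbK$-algebra $\cA$ the domain inclusion $\dom_\cA(\lambda) \subseteq \dom_\cA(Q^{-1})$ holds, and that $\lambda(X) = -u Q(X)^{-1} v$ for every $X \in \dom_\cA(\lambda)$. Since $\lambda$ is a polynomial, $\dom_\cA(\lambda) = \cA^g$, so the domain inclusion amounts to showing that $Q(X) = \begin{pmatrix} \lambda(X) & -1_{\cA}\\ -1_{\cA} & 0 \end{pmatrix}$ is invertible in $M_2(\bbK)\otimes_\bbK \cA$ for \emph{every} $X \in \cA^g$.

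The key computation is a single application of the Schur complement formula from Section \ref{subsec:Schur}, taking the pivot block to be the $(2,2)$-entry $D = 0$ --- but $0$ is not invertible, so instead I would permute and use the other corner, or more simply just exhibit the inverse explicitly. The cleanest route: observe that
$$\begin{pmatrix} \lambda(X) & -1\\ -1 & 0 \end{pmatrix} \begin{pmatrix} 0 & -1\\ -1 & -\lambda(X) \end{pmatrix} = \begin{pmatrix} 1 & 0\\ 0 & 1 \end{pmatrix},$$
and check the product in the other order as well, so that $Q(X)^{-1} = \begin{pmatrix} 0 & -1\\ -1 & -\lambda(X) \end{pmatrix}$ regardless of $X$; this establishes $\dom_\cA(Q^{-1}) = \cA^g \supseteq \dom_\cA(\lambda)$. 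Then
$$-u Q(X)^{-1} v = -\begin{pmatrix} 0 & 1 \end{pmatrix} \begin{pmatrix} 0 & -1\\ -1 & -\lambda(X) \end{pmatrix} \begin{pmatrix} 0\\ 1 \end{pmatrix} = -\begin{pmatrix} -1 & -\lambda(X) \end{pmatrix} \begin{pmatrix} 0\\ 1 \end{pmatrix} = \lambda(X),$$
which is exactly the required identity. (One should note that here $1$, $0$, and $\lambda(X)$ denote the appropriate elements of $\cA$ via the evaluation map, i.e.\ $Q(X) = Q^{(0)}\otimes 1_\cA + \sum_j Q^{(j)}\otimes X_j$ in the notation of \eqref{eq:pencil_eval-1}, and the block entry $-1$ means $-1_\cA$.)

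There is essentially no obstacle here --- the statement is a base case of the algorithm and the verification is a two-line matrix identity. The only point requiring a sentence of care is that the computation must be valid over an arbitrary, possibly noncommutative, unital $\bbK$-algebra $\cA$, so one cannot appeal to a determinant; exhibiting the explicit inverse and checking both one-sided products (which commute past the scalars $0,-1$ trivially, and $\lambda(X)$ only ever multiplies against scalars in the relevant entries) sidesteps this. I would close by remarking that the same computation works verbatim in the complex case and, when $\lambda$ has real (resp.\ self-adjoint) data, that $\rho_{\lambda(x)}$ is moreover symmetric (resp.\ self-adjoint) in the sense needed later, since $Q^\transpose = Q$ and $v = u^\transpose$.
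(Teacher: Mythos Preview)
Your proof is correct and follows essentially the same approach as the paper: exhibit the explicit inverse $Q(X)^{-1} = \begin{pmatrix} 0 & -1\\ -1 & -\lambda(X)\end{pmatrix}$, conclude that $\dom_\cA(Q^{-1}) = \cA^g$, and then compute $-uQ(X)^{-1}v = \lambda(X)$ directly. The paper's version is slightly terser (it simply states the inverse without writing out the matrix products), but the argument is the same.
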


\begin{proof}
Write $\rho=(u,Q,v)$. First of all, we note that
$$Q = \begin{pmatrix} \lambda_0 & -1\\ -1 & 0\end{pmatrix} + \begin{pmatrix} \lambda_1 & 0\\ 0 & 0\end{pmatrix} x_1 + \dots + \begin{pmatrix} \lambda_g & 0\\ 0 & 0\end{pmatrix} x_g.$$
Now, consider any unital complex algebra $\cA$. We observe that the matrix $Q(X)$ is invertible for any $X=(X_1,\dots,X_g) \in \dom_\cA(r) = \cA^g$ with
$$Q(X)^{-1} = \begin{pmatrix} 0 & -1\\ -1 & -\lambda(X)\end{pmatrix}.$$
Hence $X\in \dom_\cA(Q^{-1})$ and furthermore $- u Q(X)^{-1} v = \lambda(X)$, which completes the proof that $\rho$ is a formal linear representation of $\lambda$ in the sense of Definition \ref{def:rep}.
\end{proof}

Next, we give a lemma that justifies the rules (ii) and (iii).

\begin{lem}\label{lem:sum_prod_real}
Let $\rho_1=(u_1,Q_1,v_1)$ and $\rho_2=(u_2,Q_2,v_2)$ be formal linear representations of rational expressions $r_1$ and $r_2$, respectively. Then the following statements hold true:
\begin{itemize}
 \item A formal linear representation of $r_1 + r_2$ is given by $$\rho_1 \oplus \rho_2 := \bigg(\begin{pmatrix} u_1 & u_2\end{pmatrix}, \begin{pmatrix} Q_1 & 0\\ 0 & Q_2\end{pmatrix}, \begin{pmatrix} v_1\\ v_2 \end{pmatrix}\bigg).$$
 \item A formal linear representation of $r_1 \cdot r_2$ is given by $$\rho_1 \odot \rho_2 := \bigg(\begin{pmatrix} 0 & u_1\end{pmatrix}, \begin{pmatrix} v_1u_2 & Q_1\\ Q_2 & 0\end{pmatrix}, \begin{pmatrix} 0\\ v_2 \end{pmatrix}\bigg).$$
\end{itemize}
\end{lem}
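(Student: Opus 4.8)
The plan is to verify each of the two formal linear representations directly against Definition~\ref{def:rep}, using the Schur complement formula from Section~\ref{subsec:Schur}. In both cases we must show two things for an arbitrary unital $\bbK$-algebra $\cA$: first, that $\dom_\cA(r_1) \cap \dom_\cA(r_2)$ (which is exactly $\dom_\cA(r_1+r_2)$, resp.\ $\dom_\cA(r_1 \cdot r_2)$) is contained in $\dom_\cA(Q^{-1})$ for the relevant block pencil $Q$; and second, that evaluation of the representation yields the correct value at any point of that domain. Throughout, fix $X=(X_1,\dots,X_g)$ and abbreviate $Q_i := Q_i(X)$, $r_i := r_i(X)$, so that by hypothesis each $X$ in $\dom_\cA(r_i)$ lies in $\dom_\cA(Q_i^{-1})$ and satisfies $r_i = -u_i Q_i^{-1} v_i$.

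For the sum, let $X\in\dom_\cA(r_1)\cap\dom_\cA(r_2)$, so both $Q_1$ and $Q_2$ are invertible over $M(\bbK)\otimes_\bbK\cA$. Then the block-diagonal matrix $\begin{pmatrix} Q_1 & 0\\ 0 & Q_2\end{pmatrix}$ is invertible with inverse $\begin{pmatrix} Q_1^{-1} & 0\\ 0 & Q_2^{-1}\end{pmatrix}$, so $X\in\dom_\cA(Q^{-1})$; and
$$-\begin{pmatrix} u_1 & u_2\end{pmatrix}\begin{pmatrix} Q_1^{-1} & 0\\ 0 & Q_2^{-1}\end{pmatrix}\begin{pmatrix} v_1\\ v_2\end{pmatrix} = -u_1 Q_1^{-1} v_1 - u_2 Q_2^{-1} v_2 = r_1 + r_2,$$
which is precisely $(r_1+r_2)(X)$. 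This settles the first bullet.

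For the product, the key point is to apply the Schur complement formula to $Q = \begin{pmatrix} v_1 u_2 & Q_1\\ Q_2 & 0\end{pmatrix}$ using the block $Q_1$ in the upper-right corner as pivot. More precisely, after swapping block columns (an invertible operation that does not affect invertibility of $Q$ nor, when tracked through $u$ and $v$, the value $-uQ^{-1}v$) one brings $Q$ to a form with the invertible block $Q_1$ on the diagonal; the Schur complement with respect to $Q_1$ is then $0 - (v_1 u_2) Q_1^{-1} Q_2$, up to a sign, but this is not obviously invertible, so I would instead pivot on $Q_2$: the Schur complement of $Q_2$ (in the lower-left) after the appropriate column swap is the $1$-by-$1$-block-sized expression involving $u_1 Q_1^{-1} Q_2^{-1}$-type terms. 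The cleanest route is: for $X\in\dom_\cA(r_1)\cap\dom_\cA(r_2)$, verify by direct multiplication that
$$\begin{pmatrix} v_1 u_2 & Q_1\\ Q_2 & 0\end{pmatrix}^{-1} = \begin{pmatrix} 0 & Q_2^{-1}\\ Q_1^{-1} & -Q_1^{-1} v_1 u_2 Q_2^{-1}\end{pmatrix},$$
which one confirms by checking that the product with $Q$ is the identity on both sides (here one needs $\cA$ only unital, not stably finite, since we exhibit a two-sided inverse explicitly). This simultaneously shows $X\in\dom_\cA(Q^{-1})$ and gives
$$-\begin{pmatrix} 0 & u_1\end{pmatrix}\begin{pmatrix} 0 & Q_2^{-1}\\ Q_1^{-1} & -Q_1^{-1} v_1 u_2 Q_2^{-1}\end{pmatrix}\begin{pmatrix} 0\\ v_2\end{pmatrix} = -u_1\bigl(-Q_1^{-1} v_1 u_2 Q_2^{-1}\bigr) v_2 = (u_1 Q_1^{-1} v_1)(u_2 Q_2^{-1} v_2) = r_1 r_2,$$
which equals $(r_1\cdot r_2)(X)$.

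The only mild obstacle is bookkeeping in the product case: one must be careful that $v_1 u_2$ is an $n_1\times n_2$ scalar-over-$\bbK$ matrix so that all block sizes in the ansatz for $Q^{-1}$ are consistent, and that the evaluation convention \eqref{eq:pencil_eval-1} is respected when these constant blocks are tensored against $1_\cA$; the computation itself is then routine matrix algebra over $M(\bbK)\otimes_\bbK\cA$. No finiteness hypothesis on $\cA$ is used anywhere, consistent with the remark preceding Section~\ref{subsec:FLR} that formal linear representations behave well over arbitrary unital algebras.
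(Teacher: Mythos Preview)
Your proof is correct and follows essentially the same approach as the paper: for the sum you exploit block-diagonal invertibility, and for the product you exhibit the explicit two-sided inverse $\begin{pmatrix} 0 & Q_2^{-1}\\ Q_1^{-1} & -Q_1^{-1} v_1 u_2 Q_2^{-1}\end{pmatrix}$ and read off the value. The only difference is cosmetic: the paper goes straight to the explicit inverse without the preliminary detour through Schur-complement pivoting that you sketch and then abandon.
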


\begin{proof}
For any unital complex algebra $\cA$, consider $X=(X_1,\dots,X_g) \in \dom_\cA(r_1+r_2) = \dom_\cA(r_1) \cap \dom_\cA(r_2)$. Since $\rho_1$ and $\rho_2$ are both formal linear representations, we have
$$r_1(X) = - u_1 Q_1(X)^{-1} v_1 \qquad\text{and}\qquad r_2(X) = - u_2 Q_2(X)^{-1} v_1.$$

For $\rho_1 \oplus \rho_2 = (u,Q,v)$, this means in particular that the matrix
$$Q(X) = \begin{pmatrix} Q_1(X) & 0\\ 0 & Q_2(X)\end{pmatrix}$$
is invertible, which shows $X \in \dom_\cA(Q^{-1})$, and moreover allows us to check
\begin{align*}
- u Q(X)^{-1} v
&= - \begin{pmatrix} u_1 & u_2\end{pmatrix} \begin{pmatrix} Q_1(X)^{-1} & 0\\ 0 & Q_2(X)^{-1}\end{pmatrix} \begin{pmatrix} v_1\\ v_2\end{pmatrix}\\
&= - u_1 Q_1(X)^{-1} v_1 - u_2 Q_2(X)^{-1} v_2\\
&= r_1(X) + r_2(X).
\end{align*}
Since $X\in\dom_\cA(r_1+r_2)$ was arbitrarily chosen, we conclude that $\rho_1 \oplus \rho_2$ is a formal linear representation of $r_1+r_2$.

Similarly, if we consider $X \in \dom_\cA(r_1 \cdot r_2) = \dom_\cA(r_1) \cap \dom_\cA(r_2)$, we obtain for $\rho_1 \odot \rho_2 = (u,Q,v)$ the invertibility of the matrix
$$Q(X) = \begin{pmatrix} v_1u_2 & Q_1(X)\\ Q_2(X) & 0\end{pmatrix}.$$
In fact, one can convince oneself by a straightforward computation that more precisely
$$Q(X)^{-1} = \begin{pmatrix} 0 & Q_2(X)^{-1}\\ Q_1(X)^{-1} & - Q_1(X)^{-1} v_1u_2 Q_2(X)^{-1}\end{pmatrix}.$$
This proves $X \in \dom_\cA(Q^{-1})$ and allows us to check
\begin{align*}
-u Q(X)^{-1} v
&= - \begin{pmatrix} 0 & u_1\end{pmatrix} \begin{pmatrix} 0 & Q_2(X)^{-1}\\ Q_1(X)^{-1} & - Q_1(X)^{-1} v_1u_2 Q_2(X)^{-1}\end{pmatrix} \begin{pmatrix} 0\\ v_2 \end{pmatrix}\\
&=  u_1 Q_1(X)^{-1} v_1 u_2 Q_2(X)^{-1} v_2\\
&= r_1(X) r_2(X).
\end{align*}
Since $X\in\dom_\cA(r_1\cdot r_2)$ was again arbitrarily chosen, we may conclude now that $\rho_1 \odot \rho_2$ gives as stated a formal linear representation of $r_1 \cdot r_2$.
\end{proof}

Finally, concerning rule (iv) of Algorithm \ref{alg:explicit_rep}, we show the following lemma.

\begin{lem}\label{lem:inv_real}
Let $\rho=(u,Q,v)$ be a formal linear representation of a rational expression $r$. Then
$$\rho^{-1} := \bigg(\begin{pmatrix} 1 & 0\end{pmatrix}, \begin{pmatrix} 0 & u\\ v & -Q\end{pmatrix}, \begin{pmatrix} 1\\ 0 \end{pmatrix}\bigg)$$
gives a formal linear representation of $r^{-1}$.
\end{lem}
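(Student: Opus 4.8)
The plan is to verify directly the two defining conditions of a formal linear representation in Definition \ref{def:rep} for $\rho^{-1} = (\tilde{u},\tilde{Q},\tilde{v})$ with
$$\tilde{u} = \begin{pmatrix} 1 & 0\end{pmatrix},\qquad \tilde{Q} = \begin{pmatrix} 0 & u\\ v & -Q\end{pmatrix},\qquad \tilde{v} = \begin{pmatrix} 1\\ 0\end{pmatrix},$$
using the Schur complement formula of Section \ref{subsec:Schur} with the lower-right block $-Q$ as pivot; this is exactly parallel to the computation in the proof of Lemma \ref{lem:sum_prod_real}.

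First I would fix a unital $\bbK$-algebra $\cA$ and a point $X=(X_1,\dots,X_g)\in\dom_\cA(r^{-1})$. By Definition \ref{def:ev_dom} this means $X\in\dom_\cA(r)$ and $r(X)$ is invertible in $\cA$; since $\rho=(u,Q,v)$ is a formal linear representation of $r$, the inclusion $\dom_\cA(r)\subseteq\dom_\cA(Q^{-1})$ shows that $Q(X)$ is invertible and that $r(X) = -uQ(X)^{-1}v$. Hence the lower-right block $-Q(X)$ of $\tilde{Q}(X)=\begin{pmatrix} 0 & u\\ v & -Q(X)\end{pmatrix}$ is invertible, and its Schur complement in $\tilde{Q}(X)$ equals $0 - u(-Q(X))^{-1}v = uQ(X)^{-1}v = -r(X)$, which is invertible by assumption. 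By the Schur complement formula $\tilde{Q}(X)$ is therefore invertible, i.e. $X\in\dom_\cA(\tilde{Q}^{-1})$; since $X\in\dom_\cA(r^{-1})$ was arbitrary, this already gives $\dom_\cA(r^{-1})\subseteq\dom_\cA(\tilde{Q}^{-1})$, the domain condition of Definition \ref{def:rep}.

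Next I would read off the value from the rank-one-update form in the second line of \eqref{eq:Schur}, applied with $A=0$, $B=u$, $C=v$, $D=-Q(X)$. Compressing that identity by $\tilde{u}$ on the left and $\tilde{v}$ on the right annihilates the $\begin{pmatrix} 0 & 0\\ 0 & D^{-1}\end{pmatrix}$ summand, and since $\tilde{u}\begin{pmatrix} 1\\ -D^{-1}C\end{pmatrix} = 1$ and $\begin{pmatrix} 1 & -BD^{-1}\end{pmatrix}\tilde{v} = 1$, it leaves precisely the Schur complement inverse; thus $\tilde{u}\,\tilde{Q}(X)^{-1}\tilde{v} = (A-BD^{-1}C)^{-1} = (-r(X))^{-1} = -r(X)^{-1}$, so that
$$-\tilde{u}\,\tilde{Q}(X)^{-1}\tilde{v} = r(X)^{-1} = r^{-1}(X),$$
where the last equality is the inversion rule of Definition \ref{def:ev_dom}. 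As $\cA$ and $X\in\dom_\cA(r^{-1})$ were arbitrary, $\rho^{-1}$ satisfies Definition \ref{def:rep} and hence is a formal linear representation of $r^{-1}$.

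I do not expect a genuine obstacle: the argument is a single Schur complement computation. The only points demanding care are the two interacting sign conventions — the minus sign built into the defining relation $r(X)=-uQ(X)^{-1}v$, and the fact that the pivot block is $-Q$ rather than $Q$ — together with the (easy but easy-to-misstate) bookkeeping that compression by $\tilde{u}$ and $\tilde{v}$ extracts exactly the scalar $(1,1)$-entry of $\tilde{Q}(X)^{-1}$, which is immediate once the Schur formula is written in rank-one-update form.
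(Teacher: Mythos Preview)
Your proof is correct and follows essentially the same approach as the paper: fix $X\in\dom_\cA(r^{-1})$, use that $Q(X)$ is invertible with $r(X)=-uQ(X)^{-1}v$, apply the Schur complement formula with pivot $-Q(X)$ to obtain invertibility of $\tilde{Q}(X)$ and to read off the $(1,1)$-entry of its inverse. Your version is in fact slightly more careful with the sign bookkeeping than the paper's own write-up.
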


\begin{proof}
Take any $X\in\dom_\cA(r^{-1})$, which means by definition that $X\in\dom_\cA(r)$ and that $r(X)\in\cA$ is invertible. Since $\rho$ is assumed to be a formal linear representation of $r$, this ensures the invertibility of $Q(X)$. Hence, the Schur complement formula tells us that the matrix
$$\begin{pmatrix} 0 & u\\ v & -Q(X)\end{pmatrix}$$
must be invertible since its Schur complement is given by $uQ(X)^{-1}v = -r(X)$. Hence, we infer $X\in\dom_\cA\Big(\begin{pmatrix} 0 & u\\ v & -Q\end{pmatrix}^{-1}\Big)$. Furthermore, the Schur complement formula tells us in this case that
$$-\begin{pmatrix} 1 & 0\end{pmatrix} \begin{pmatrix} 0 & u\\ v & -Q(X)\end{pmatrix}^{-1}\begin{pmatrix} 1\\ 0 \end{pmatrix} = -(uQ(X)^{-1}v)^{-1} = r(X).$$
Since this holds for all $X\in\dom_\cA(r^{-1})$, we see that $\rho^{-1}$ is indeed a formal linear representation of $r^{-1}$.
\end{proof}

\subsubsection{Selfadjoint Formal Linear Representations}

We provide now some counterpart of Definition \ref{def:rep} designed for the case of rational expressions that are selfadjoint in the sense of Section \ref{subsubsec:symmetric_mat-val_rational_expressions}.

\begin{definition}\label{def:rep_sa}
Let $r$ be a selfadjoint rational expression over $\bbC$. A \textbf{selfadjoint formal linear representation $\rho=(Q,v)$}\index{formal linear representation!selfadjoint} consists of
\begin{itemize}
 \item an affine linear pencil $$Q = Q^{(0)} + Q^{(1)} x_1 + \dots + Q^{(g)} x_g$$ for selfadjoint matrices $Q^{(0)},Q^{(1)},\dots,Q^{(g)} \in M_n(\CC)$ for some $n$,
 \item and a $n\times 1$-matrix $v$ over $\CC$,
\end{itemize}
such that the following condition is satisfied:
\begin{quote}
For any unital complex $\ast$-algebra $\cA$, we have that
$$\dom_\cA^\sa(r) \subseteq \dom_\cA(Q^{-1})$$
and it holds true for any $(X_1,\dots,X_g) \in \dom_\cA^\sa(r)$ that
$$r(X_1,\dots,X_g) = - v^\ast Q(X_1,\dots,X_g)^{-1} v.$$
\end{quote}
\end{definition}

Moreover, we may generalize Theorem \ref{thm:existence-flr}.

\begin{thm}\label{thm:rep_sa_exist}
Each selfadjoint rational expression in $x=(x_1,\dots,x_g)$ over $\bbC$ (not necessarily regular at zero) admits a selfadjoint formal linear representation in the sense of Definition \ref{def:rep_sa}.
\end{thm}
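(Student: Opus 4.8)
The plan is to reduce the selfadjoint statement to the already-established Theorem~\ref{thm:existence-flr} by a symmetrization trick. First I would apply Theorem~\ref{thm:existence-flr} to the rational expression $r$ itself (ignoring selfadjointness for the moment) to obtain an ordinary formal linear representation $\rho=(u,Q,v)$ of $r$, with $Q$ an $n\times n$ affine linear pencil. The point is then to assemble from $Q$ and its adjoint pencil $Q^\ast$ (Definition~\ref{def:linear_pencils}(iv)) a selfadjoint $2n\times 2n$ pencil, together with a column vector that "averages" $\rho$ with its adjoint.

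Concretely, I would set
\[
\hat Q(x) := \begin{pmatrix} 0 & Q(x)\\ Q^\ast(x) & 0\end{pmatrix}, \qquad \hat v := \begin{pmatrix} v\\ \tfrac12 u^\ast\end{pmatrix}.
\]
The coefficient matrices of $\hat Q$ are $\begin{pmatrix} 0 & Q^{(j)}\\ (Q^{(j)})^\ast & 0\end{pmatrix}$, which are selfadjoint, so $\hat Q$ is a selfadjoint linear pencil, and $\hat v$ is a scalar column vector of the right size; hence $(\hat Q,\hat v)$ has the shape required by Definition~\ref{def:rep_sa}, and it remains to verify the two conditions there for an arbitrary unital complex $\ast$-algebra $\cA$. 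For the domain inclusion, take $X\in\dom_\cA^\sa(r)$. Since $\rho$ represents $r$, the matrix $Q(X)$ is invertible; since $X=X^\ast$, one has $Q(X)^\ast = Q^\ast(X)$ (here the adjoint on matrices over $\cA$ reindexes the coefficients and uses $X_j^\ast=X_j$), so $Q^\ast(X)$ is invertible as well, whence $\hat Q(X)$ is invertible with $\hat Q(X)^{-1} = \begin{pmatrix} 0 & Q^\ast(X)^{-1}\\ Q(X)^{-1} & 0\end{pmatrix}$; thus $X\in\dom_\cA(\hat Q^{-1})$ by the remark following Definition~\ref{def:ev_dom-matval}. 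For the evaluation identity, a direct block computation gives
\[
-\hat v^\ast \hat Q(X)^{-1}\hat v = -\tfrac12\, u Q(X)^{-1} v - \tfrac12\, v^\ast Q^\ast(X)^{-1} u^\ast = \tfrac12\big(r(X) + r(X)^\ast\big),
\]
using $r(X) = -uQ(X)^{-1}v$ and therefore $r(X)^\ast = -v^\ast Q^\ast(X)^{-1}u^\ast$ (again via $X=X^\ast$). Finally, since $r$ is a selfadjoint rational expression and $X\in\dom_\cA^\sa(r)$ is a selfadjoint point, $r(X)^\ast = r(X^\ast) = r(X)$, so the right-hand side is exactly $r(X)$. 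This shows that $(\hat Q,\hat v)$ is a selfadjoint formal linear representation of $r$, proving Theorem~\ref{thm:rep_sa_exist}.

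There is no serious obstacle here; the construction is forced once one decides to symmetrize a general formal linear representation (a recursion mimicking Algorithm~\ref{alg:explicit_rep} would fail, since a product of selfadjoint expressions need not be selfadjoint and a selfadjoint $r$ may be built from non-selfadjoint pieces such as $s^\ast\cdot s$). The only place demanding care is the bookkeeping of adjoints: distinguishing the adjoint $\cdot^\ast$ on matrices over $\cA$ from the adjoint pencil $Q^\ast$ of Definition~\ref{def:linear_pencils}(iv), noting that these two operations agree after evaluation precisely because we restrict to selfadjoint tuples $X$, and observing that the hypothesis "$r$ selfadjoint" forces the value $r(X)$ to be a selfadjoint element of $\cA$ for $X\in\dom_\cA^\sa(r)$, which is what allows the averaging $\tfrac12(r(X)+r(X)^\ast)$ to collapse to $r(X)$. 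If desired, one can also record the parallel observation for matrices of selfadjoint rational expressions (the later Theorem~\ref{thm:FLR_matrices_of_rational_expressions}), since the same block-symmetrization applies verbatim.
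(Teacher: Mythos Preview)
Your proof is correct and is essentially identical to the paper's argument: both start from an arbitrary formal linear representation $(u,Q,v)$ of $r$ (via Theorem~\ref{thm:existence-flr}) and symmetrize it into a $2n\times 2n$ block pencil with $Q$ and $Q^\ast$ off the diagonal, paired with a column vector built from $v$ and $\tfrac12 u^\ast$. The only difference is cosmetic---the paper places $Q_0^\ast$ in the upper-right block and $\tfrac12 u_0^\ast$ on top of $v_0$, whereas you swap these positions---and the averaging computation $\tfrac12(r(X)+r(X)^\ast)=r(X)$ is the same.
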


\begin{proof}
We take any formal linear representation $\rho_0=(u_0,Q_0,v_0)$ of $r$ like in Theorem \ref{thm:existence-flr} and we put
\begin{equation}
\label{eq:rep_sa}
\rho = (Q,v) := \bigg(\begin{pmatrix} 0 & Q_0^\ast\\ Q_0 & 0\end{pmatrix}, \begin{pmatrix} \frac{1}{2} u_0^\ast \\ v_0\end{pmatrix}\bigg).
\end{equation}
Clearly, the linear pencil $Q$ consists of selfadjoint matrices and satisfies $\dom_\cA(Q_0^{-1}) = \dom_\cA(Q^{-1})$ for any unital complex ($\ast$-)algebra $\cA$, since we have for arbitrary $X \in \cA^g$ that $Q(X)$ is invertible if and only if $Q_0(X)$ is invertible.

Furthermore, if $\cA$ is any unital complex $\ast$-algebra, we have
$$\dom^\sa_\cA(r) \subseteq \dom_\cA(r) \subseteq \dom_\cA(Q_0^{-1}) = \dom_\cA(Q^{-1})$$
and we may observe that for each point $X\in \dom^\sa_\cA(r)$
\begin{align*}
-u Q(X)^{-1} v
&= \begin{pmatrix} \frac{1}{2} u_0 & v_0^\ast\end{pmatrix} \begin{pmatrix} 0 & Q_0(X)^\ast\\ (Q_0(X)^\ast)^{-1} & 0\end{pmatrix} \begin{pmatrix} \frac{1}{2} u_0^\ast \\ v_0\end{pmatrix}\\
&= - \frac{1}{2} u_0 Q_0(X)^{-1} v_0 - \frac{1}{2} v_0^\ast (Q_0(X)^\ast)^{-1} u_0^\ast\\
&= - \frac{1}{2} u_0 Q_0(X)^{-1} v_0 - \frac{1}{2} \big(u_0 Q_0(X)^{-1} v_0\big)^\ast\\
&= \frac{1}{2} r(X) + \frac{1}{2} r(X)^\ast\\
&= r(X).
\end{align*}
This completes the proof.
\end{proof}

We point out that, while we only discussed the complex case here, \textbf{symmetric formal linear representation $\rho=(Q,v)$}\index{formal linear representation!symmetric} of rational expressions over $\bbR$ that are symmetric in the sense of Section \ref{subsubsec:symmetric_mat-val_rational_expressions} can be defined and treated analogously. In particular, Theorem \ref{thm:rep_sa_exist} stays valid in the real case.

\subsection{Formal Linear Representations of Matrix-valued and Matrices of NC Rational Expressions}
\label{subsec:FLR_matval}

In this section, we want to explain how the theory presented in the previous subsection can be extended to \textbf{matrices of rational expressions}\index{rational expression!matrix of}. While this is our actual goal, it is convenient to discuss the more general case of matrix-valued rational expressions first.

\subsubsection{Formal Linear Representations of Matrix-valued NC Rational Expressions}
\label{subsubsec:FLR_matval}

Recall from Section \ref{subsubsec:matVrat} that matrix-valued rational expressions are built like rational expressions in a syntactically valid way -- by using the operations $+$, $\cdot$, and ${\cdot}^{-1}$, and by placing parentheses -- out of matrix-valued polynomials $M_{n\times m}(\bbK) \langle x_1,\dots,x_g\rangle$.
Note that we also agree on the convention \eqref{eq:scalar_matrices_commute_with_indeterminates}.

\begin{definition}\label{def:rep_matval}
Let $r$ be a matrix-valued rational expression of size $d_1 \times d_2$ in the variables $x$ over $\bbK$. A \textbf{matrix-valued formal linear representation $\rho=(u,Q,v)$ of $r$}\index{formal linear representation!matrix-valued} consists of
\begin{itemize}
 \item an affine linear pencil $$Q = Q^{(0)} + Q^{(1)} x_1 + \dots + Q^{(g)} x_g$$ for matrices $Q^{(0)},Q^{(1)},\dots,Q^{(g)}\in M_n(\bbK)$ of some dimension $n$,
 \item a $d_1\times n$-matrix $u$ over $\bbK$,
 \item and a $n\times d_2$-matrix $v$ over $\bbK$,
\end{itemize}
such that the following condition is satisfied:
\begin{quote}
For any unital $\bbK$-algebra $\cA$, we have that
$$\dom_\cA(r) \subseteq \dom_\cA(Q^{-1})$$
and it holds true for any $(X_1,\dots,X_g) \in \dom_\cA(r)$ that
$$r(X_1,\dots,X_g) = - u Q(X_1,\dots,X_g)^{-1} v.$$
\end{quote}
\end{definition}

It is easy to see that Algorithm \ref{alg:explicit_rep} extends immediately to the case of matrix-valued rational expressions.

\begin{algorithm}\label{alg:explicit_rep_matval}
Let $r$ be a matrix-valued rational expression in $x=(x_1,\dots,x_g)$. A matrix-valued formal linear representation $\rho=(u,Q,v)$ of $r$ can be constructed by using successively the following rules:
\begin{itemize}
 \item[(i)] For any affine linear polynomial over $\bbK$ (i.e., a linear pencil over $\bbK$), say $$\Lambda(x) = \Lambda^{(0)} + \Lambda^{(1)} x_1 + \ldots + \Lambda^{(g)} x_g,$$ a matrix-valued formal linear representation is given by
$$\rho_{\Lambda(x)} := \bigg(\begin{pmatrix} 0 & 1\end{pmatrix}, \begin{pmatrix} \Lambda(x) & -1\\ -1 & 0\end{pmatrix}, \begin{pmatrix} 0\\ 1 \end{pmatrix}\bigg).$$
 \item[(ii)] If $\rho_1=(u_1,Q_1,v_1)$ and $\rho_2=(u_2,Q_2,v_2)$ are matrix-valued formal linear representations for the matrix-valued rational expressions $r_1$ and $r_2$, respectively, then $\rho_1 \oplus \rho_2$ as defined in \eqref{eq:rep_sum} gives a matrix-valued formal linear representation of $r_1 + r_2$.
 \item[(iii)] If $\rho_1=(u_1,Q_1,v_1)$ and $\rho_2=(u_2,Q_2,v_2)$ are matrix-valued formal linear representations for the matrix-valued rational expressions $r_1$ and $r_2$, respectively, then $\rho_1 \odot \rho_2$ as defined in \eqref{eq:rep_prod} gives a matrix-valued formal linear representation of $r_1 \cdot r_2$.
 \item[(iv)] If $\rho=(u,Q,v)$ is a matrix-valued formal linear representation of $r$, then $\rho^{-1}$ as defined in \eqref{eq:rep_inv} gives a matrix-valued formal linear representation of $r^{-1}$.
\end{itemize}
\end{algorithm}

Within this frame, we thus obtain the following analogue of Theorem \ref{thm:existence-flr}.

\begin{thm}\label{thm:existence-flr_matval}
Each matrix-valued rational expression in the variables $x=(x_1,\dots,x_g)$ over $\bbK$ has a matrix-valued formal linear representation in the sense of Definition \ref{def:rep_matval}.
\end{thm}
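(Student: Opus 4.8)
The plan is to prove Theorem~\ref{thm:existence-flr_matval} by structural induction on the way a matrix-valued rational expression $r$ is assembled, exactly paralleling the proof of Theorem~\ref{thm:existence-flr} in Section~\ref{subsubsec:algorithm_proof}. Concretely, it suffices to verify that the four rules of Algorithm~\ref{alg:explicit_rep_matval} are correct and that the atoms of the recursion -- the matrix-valued NC polynomials of Section~\ref{subsubsec:matVrat} -- each carry a matrix-valued formal linear representation in the sense of Definition~\ref{def:rep_matval}; structural induction then produces one for any $r$.

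For the base case, I would first note that rule~(i) as stated delivers representations only of linear pencils, whereas a matrix-valued polynomial $p = \sum_{w\in\WOR_g} p_w x^w$ of size $d_1 \times d_2$ may have higher degree. This gap is bridged exactly by the factorization implicit in the identification $M_{d_1 \times d_2}(\bbK)\langle x\rangle \cong M_{d_1 \times d_2}(\bbK) \otimes_\bbK \bbK\langle x\rangle$ recorded in Section~\ref{subsubsec:matVrat}: by the convention~\eqref{eq:scalar_matrices_commute_with_indeterminates}, each monomial with $w = \chi_{i_1}\cdots\chi_{i_k}$ equals the product of linear pencils $(p_w x_{i_1})\cdot(I_{d_2} x_{i_2})\cdots(I_{d_2} x_{i_k})$, and the constant term $p_\emptyset$ is itself a linear pencil; applying rule~(iii) along each monomial and then rule~(ii) over the finitely many nonzero $w$ gives a matrix-valued formal linear representation of $p$. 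The correctness of rule~(i) for a pencil $\Lambda(x)$ is the verbatim analogue of Lemma~\ref{lem:trivial_real}: over any unital $\bbK$-algebra $\cA$ one has $\dom_\cA(\Lambda) = \cA^g$, the matrix $\begin{pmatrix}\Lambda(X) & -I\\ -I & 0\end{pmatrix}$ is invertible in $M_2(\bbK)\otimes M_n(\bbK)\otimes\cA$ with inverse $\begin{pmatrix}0 & -I\\ -I & -\Lambda(X)\end{pmatrix}$, and $-uQ(X)^{-1}v = \Lambda(X)$; nothing in this computation used that $n$ equals $1$.

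For the inductive step I would observe that the proofs of Lemma~\ref{lem:sum_prod_real} and Lemma~\ref{lem:inv_real} carry over with no essential change: they consist only of manipulations of block matrices over $M_N(\bbK)\otimes_\bbK\cA$ for suitable $N$, together with the Schur complement formula~\eqref{eq:Schur}, which is valid over an arbitrary unital $\cA$, and at no point is it exploited that the outer blocks $u, v$ have a single row or column. The only adjustments are dimensional bookkeeping -- in rule~(iii) the block $v_1 u_2$ has matching sizes precisely because $r_1$ is of size $d_1 \times d$ and $r_2$ of size $d \times d_2$, so the resulting representation has outer dimensions $d_1 \times d_2$; and in rules~(i) and (iv) the symbol ``$1$'' in~\eqref{eq:rep_init} and~\eqref{eq:rep_inv} is to be read as an identity matrix of the appropriate size. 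One still checks by direct multiplication that, whenever $Q_1(X)$ and $Q_2(X)$ are invertible, so is $\begin{pmatrix} v_1 u_2 & Q_1(X)\\ Q_2(X) & 0\end{pmatrix}$, with inverse $\begin{pmatrix} 0 & Q_2(X)^{-1}\\ Q_1(X)^{-1} & -Q_1(X)^{-1} v_1 u_2 Q_2(X)^{-1}\end{pmatrix}$; and for rule~(iv) one applies~\eqref{eq:Schur} with pivot block $-Q(X)$, invertible because $X \in \dom_\cA(r)$, whose Schur complement $u Q(X)^{-1} v = -r(X)$ is invertible because $X \in \dom_\cA(r^{-1})$.

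I do not expect a serious obstacle: the whole content is the remark that the scalar arguments of Section~\ref{subsubsec:algorithm_proof} never use the outer dimension being $1$, so they apply mutatis mutandis once the dimensions of $u$ and $v$ are tracked correctly. The one place that genuinely requires a (trivial) new step is the base case -- passing from linear pencils, which rule~(i) handles directly, to arbitrary matrix-valued polynomials -- and this is dispatched by the monomial factorization above. Assembling these ingredients by induction on the syntactic structure of $r$ completes the proof.
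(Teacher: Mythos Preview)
Your proposal is correct and takes essentially the same approach as the paper, which simply asserts that Algorithm~\ref{alg:explicit_rep_matval} extends Algorithm~\ref{alg:explicit_rep} to the matrix-valued setting and lets Theorem~\ref{thm:existence-flr_matval} follow. You are more explicit than the paper in two respects: you spell out that the base case must cover arbitrary matrix-valued polynomials (not just pencils) via the monomial factorization, and you track the dimensional bookkeeping in rules~(iii) and~(iv); the paper leaves both points implicit.
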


Concerning matrix-valued rational expressions over $\bbC$ that are selfadjoint in the sense of Section \ref{subsubsec:symmetric_mat-val_rational_expressions}, we see that Definition \ref{def:rep_sa} extends to this generality.

\begin{definition}\label{def:matval_rep_sa}
Let $r$ be a selfadjoint matrix-valued rational expression over $\bbC$ of size $d \times d$ in the variables $x$. A \textbf{selfadjoint matrix-valued formal linear representation $\rho=(Q,v)$ of $r$}\index{formal linear representation!matrix-valued!selfadjoint} consists of
\begin{itemize}
 \item an affine linear pencil $$Q = Q^{(0)} + Q^{(1)} x_1 + \dots + Q^{(g)} x_g$$ for selfadjoint matrices $Q^{(0)},Q^{(1)},\dots,Q^{(g)}\in M_n(\CC)$ for some $n$,
 \item and a $n\times d$-matrix $v$ over $\CC$,
\end{itemize}
such that the following condition is satisfied:
\begin{quote}
For any unital complex $\ast$-algebra $\cA$, we have that
$$\dom_\cA^\sa(r) \subseteq \dom_\cA(Q^{-1})$$
and it holds true for any $(X_1,\dots,X_g) \in \dom_\cA^\sa(r)$ that
$$r(X_1,\dots,X_g) = - v^\ast Q(X_1,\dots,X_g)^{-1} v.$$
\end{quote}
\end{definition}

We also have an analogue of Theorem \ref{thm:rep_sa_exist} in this setting.

\begin{thm}\label{thm:self_matval}
Each selfadjoint matrix-valued rational expression in the variables $x=(x_1,\dots,x_g)$ over $\bbC$ admits a selfadjoint matrix-valued formal linear representation in the sense of Definition \ref{def:matval_rep_sa}.
\end{thm}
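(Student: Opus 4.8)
The plan is to transcribe, almost verbatim, the proof of Theorem~\ref{thm:rep_sa_exist} (the scalar selfadjoint case), adjusting only the bookkeeping needed to accommodate the matrix size $d$. First I would apply Theorem~\ref{thm:existence-flr_matval} to produce \emph{some} matrix-valued formal linear representation $\rho_0 = (u_0, Q_0, v_0)$ of $r$: here $Q_0 = Q_0^{(0)} + Q_0^{(1)} x_1 + \dots + Q_0^{(g)} x_g$ is an affine linear pencil of some size $n$, $u_0 \in M_{d\times n}(\bbC)$, $v_0 \in M_{n\times d}(\bbC)$, and for every unital complex algebra $\cA$ one has $\dom_\cA(r) \subseteq \dom_\cA(Q_0^{-1})$ together with $r(X) = - u_0 Q_0(X)^{-1} v_0$ on $\dom_\cA(r)$.

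Next I would set, exactly as in \eqref{eq:rep_sa},
$$\rho = (Q, v) := \Bigg( \begin{pmatrix} 0 & Q_0^\ast \\ Q_0 & 0 \end{pmatrix}, \begin{pmatrix} \frac{1}{2} u_0^\ast \\ v_0 \end{pmatrix} \Bigg),$$
where $Q_0^\ast$ denotes the pencil with $j$-th coefficient $(Q_0^{(j)})^\ast$ in the sense of Definition~\ref{def:linear_pencils}(iv). Then $Q$ is an affine linear pencil of size $2n$ whose coefficient matrices $\begin{pmatrix} 0 & (Q_0^{(j)})^\ast \\ Q_0^{(j)} & 0 \end{pmatrix}$ are selfadjoint, and $v$ is a $2n \times d$ matrix over $\bbC$, so $\rho$ has the shape demanded by Definition~\ref{def:matval_rep_sa}. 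To check the required properties, fix a unital complex $\ast$-algebra $\cA$ and a point $X \in \dom_\cA^\sa(r)$. Since $\dom_\cA^\sa(r) \subseteq \dom_\cA(r) \subseteq \dom_\cA(Q_0^{-1})$, the matrix $Q_0(X)$ is invertible; reorganising the block/tensor structure gives $Q(X) = \begin{pmatrix} 0 & Q_0^\ast(X) \\ Q_0(X) & 0 \end{pmatrix}$, and because $X = X^\ast$ we have $Q_0^\ast(X) = Q_0(X)^\ast$, which is invertible as well. Hence $Q(X)$ is invertible by the Schur complement formula, so $\dom_\cA^\sa(r) \subseteq \dom_\cA(Q^{-1})$. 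Inverting $Q(X)$ block-antidiagonally and using both $u_0 Q_0(X)^{-1} v_0 = -r(X)$ and the selfadjointness of $r$ at the selfadjoint point $X$ (so that $r(X)^\ast = r(X^\ast) = r(X)$), I would obtain
$$- v^\ast Q(X)^{-1} v = -\frac{1}{2} u_0 Q_0(X)^{-1} v_0 - \frac{1}{2}\big(u_0 Q_0(X)^{-1} v_0\big)^\ast = \frac{1}{2} r(X) + \frac{1}{2} r(X)^\ast = r(X),$$
which is the defining identity of Definition~\ref{def:matval_rep_sa}.

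There is essentially no genuine obstacle here; the only point needing a moment of care is that the identity $Q_0^\ast(X) = Q_0(X)^\ast$ — and hence the invertibility of $Q(X)$ — is valid only at selfadjoint $X$, but that is exactly the range over which Definition~\ref{def:matval_rep_sa} asks for the inclusion. One should also record, as in the remark following Theorem~\ref{thm:rep_sa_exist}, that the real case is handled identically with $\ast$ replaced by $\transpose$ throughout.
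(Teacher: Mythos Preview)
Your proposal is correct and follows exactly the approach the paper takes: it invokes Theorem~\ref{thm:existence-flr_matval} to obtain a matrix-valued formal linear representation $(u_0,Q_0,v_0)$ and then applies the symmetrization construction \eqref{eq:rep_sa} from the proof of Theorem~\ref{thm:rep_sa_exist} verbatim. The paper in fact gives only a sketch here, so your writeup supplies more detail than the original.
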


The proof uses exactly the same construction already used in the proof of Theorem \ref{thm:rep_sa_exist}. Namely, for any matrix-valued formal linear representation $\rho=(u_0,Q_0,v_0)$, whose existence is guaranteed by Theorem \ref{thm:existence-flr_matval}, we obtain by
$$\rho = (Q,v) := \bigg(\begin{pmatrix} 0 & Q_0^\ast\\ Q_0 & 0\end{pmatrix}, \begin{pmatrix} \frac{1}{2} u_0^\ast \\ v_0\end{pmatrix}\bigg)$$
a selfadjoint matrix-valued formal linear representation of the same matrix-valued rational expression.

Again, while having only discussed the complex case, the case of symmetric matrix-valued rational expressions over $\bbR$ can be treated similarly. In particular, one can prove in analogy to Theorem \ref{thm:self_matval} that they allow always a symmetric formal linear representation \textbf{symmetric matrix-valued formal linear representation}\index{formal linear representation!matrix-valued!symmetric}. 

We conclude by noting -- without going much into details -- that \textbf{operator-valued formal linear representations}\index{formal linear representation!operator-valued} of \textbf{operator-valued rational expressions}\index{rational expression!operator-valued} can be defined and treated similarly. The underlying idea is that, instead of replacing the scalar field $\bbK$ by the set of all rectangular matrices over $\bbK$, we could alternatively replace $\bbK$ by any unital $\bbK$-algebra $\cB$.
Defining evaluations is then a little more tricky. In particular, it depends on whether one keeps the convention \eqref{eq:scalar_matrices_commute_with_indeterminates} or not; in the first named case, evaluations on unital $\bbK$-algebras $\cA$ would give values in $\cB \otimes_\bbK \cA$, while in the latter case algebras $\cA$ are considered, into which $\cB$ unitally embeds, leading to $\cA$-valued evaluations on $\cA$.

\subsubsection{Formal Linear Representations of Matrices of NC Rational Expressions}

As explained in Section \ref{subsubsec:matrices_of_rational_expressions}, matrices of rational expressions form a subclass of all matrix-valued rational expressions. Accordingly, the results provided in Section \ref{subsubsec:FLR_matval} directly apply to them and yield the following result.

\begin{thm}\label{thm:FLR_matrices_of_rational_expressions}
Each $d_1 \times d_2$-matrix $\ur$ of rational expressions in $x=(x_1,\dots,x_g)$ over $\bbC$ admits matrix-valued formal linear representation in the sense of Definition \ref{def:rep_matval}.
If $\ur$ is of square type ($d_1=d_2$) and moreover selfadjoint in the sense of Section \ref{subsubsec:symmetric_mat-val_rational_expressions}, then there exists a selfadjoint matrix-valued formal linear representation in the sense of Definition \ref{def:matval_rep_sa}.
\end{thm}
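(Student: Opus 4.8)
The plan is to reduce both assertions to the existence results already established for matrix-valued rational expressions. Recall from Section~\ref{subsubsec:matrices_of_rational_expressions} that, owing to the convention~\eqref{eq:scalar_matrices_commute_with_indeterminates}, a $d_1\times d_2$-matrix $\ur=(r_{ij})$ of rational expressions may be regarded as the matrix-valued rational expression $\ur=\sum_{i,j}E_{ij}r_{ij}$, and that for every unital $\bbK$-algebra $\cA$ one has $\dom_\cA(\ur)=\bigcap_{i,j}\dom_\cA(r_{ij})$ together with $\ur(X)=\sum_{i,j}E_{ij}\otimes r_{ij}(X)$. Granting this identification, the first claim follows at once from Theorem~\ref{thm:existence-flr_matval}, and the second from Theorem~\ref{thm:self_matval}, provided one checks that a selfadjoint matrix of rational expressions is also a selfadjoint matrix-valued rational expression.

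For a reader who would rather see an explicit construction than an appeal to Theorem~\ref{thm:existence-flr_matval}, I would proceed as follows. First apply the scalar Algorithm~\ref{alg:explicit_rep} to each entry $r_{ij}$, obtaining a formal linear representation $\rho_{ij}=(u_{ij},Q_{ij},v_{ij})$ with $Q_{ij}$ of size $n_{ij}\times n_{ij}$, $u_{ij}$ of size $1\times n_{ij}$, and $v_{ij}$ of size $n_{ij}\times 1$. Writing $e_i\in\bbC^{d_1}$ and $f_j\in\bbC^{d_2}$ for the standard basis column vectors, pad $u_{ij}$ and $v_{ij}$ by zero rows and columns, i.e.\ set $\widetilde u_{ij}:=e_i u_{ij}$ and $\widetilde v_{ij}:=v_{ij}f_j^{\transpose}$; then $(\widetilde u_{ij},Q_{ij},\widetilde v_{ij})$ is a matrix-valued formal linear representation of the $d_1\times d_2$-expression $E_{ij}r_{ij}$, since on $\dom_\cA(r_{ij})$
$$-\widetilde u_{ij}\,Q_{ij}(X)^{-1}\,\widetilde v_{ij}=e_i\bigl(-u_{ij}Q_{ij}(X)^{-1}v_{ij}\bigr)f_j^{\transpose}=E_{ij}\otimes r_{ij}(X).$$
Combining these by repeated use of rule~(ii) of Algorithm~\ref{alg:explicit_rep_matval}, the direct sum $\rho=\bigoplus_{i,j}(\widetilde u_{ij},Q_{ij},\widetilde v_{ij})=(u,Q,v)$ with $Q=\bigoplus_{i,j}Q_{ij}$ represents $\sum_{i,j}E_{ij}r_{ij}=\ur$, and the required inclusion $\dom_\cA(\ur)=\bigcap_{i,j}\dom_\cA(r_{ij})\subseteq\bigcap_{i,j}\dom_\cA(Q_{ij}^{-1})=\dom_\cA(Q^{-1})$ holds by construction.

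For the selfadjoint part I would argue as follows. Suppose $\ur$ is square of size $d$ and selfadjoint as a matrix of rational expressions, i.e.\ $\ur(X)^\ast=\ur(X)$ for every selfadjoint $X\in\dom_\cA^\sa(\ur)$ and every unital complex $\ast$-algebra $\cA$. Any such $X$ satisfies $X^\ast=X$, hence $\ur(X^\ast)=\ur(X)$, so the relation $\ur(X^\ast)=\ur(X)^\ast$ holds as well; thus, viewed as a matrix-valued rational expression, $\ur$ is selfadjoint in the sense of Section~\ref{subsubsec:symmetric_mat-val_rational_expressions}, with the same selfadjoint $\cA$-domain. Theorem~\ref{thm:self_matval} therefore applies and yields a selfadjoint matrix-valued formal linear representation; concretely, starting from a matrix-valued formal linear representation $\rho_0=(u_0,Q_0,v_0)$ as in the previous paragraph, the doubling $\rho=(Q,v):=\left(\left(\begin{smallmatrix}0 & Q_0^\ast\\ Q_0 & 0\end{smallmatrix}\right),\ \left(\begin{smallmatrix}\frac12 u_0^\ast\\ v_0\end{smallmatrix}\right)\right)$ works verbatim as in the proof of Theorem~\ref{thm:rep_sa_exist}.

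I do not expect a deep obstacle here: the substantive work has already been carried out in Theorems~\ref{thm:existence-flr_matval} and~\ref{thm:self_matval}, and what remains is bookkeeping — verifying that a matrix of rational expressions is literally a matrix-valued rational expression with the \emph{same} $\cA$-domains (so that Definitions~\ref{def:rep_matval} and~\ref{def:matval_rep_sa} apply verbatim), as asserted in Section~\ref{subsubsec:matrices_of_rational_expressions}, and that the two notions of selfadjointness coincide on selfadjoint evaluation points. The only mildly delicate point is the first identification, which rests entirely on the convention~\eqref{eq:scalar_matrices_commute_with_indeterminates}; the explicit construction of the second paragraph is self-contained and circumvents it, which is why I would present it alongside the short argument.
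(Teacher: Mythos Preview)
Your proposal is correct and follows exactly the paper's approach: the paper simply remarks that matrices of rational expressions form a subclass of matrix-valued rational expressions (Section~\ref{subsubsec:matrices_of_rational_expressions}), so that Theorems~\ref{thm:existence-flr_matval} and~\ref{thm:self_matval} apply directly, without giving any further argument. Your additional explicit construction via entrywise representations and the verification that the two notions of selfadjointness agree are not in the paper but are correct elaborations of the same reduction.
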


Note that, since the results of Section \ref{subsubsec:FLR_matval} stay valid in the real case, there is also a real version of Theorem \ref{thm:FLR_matrices_of_rational_expressions} stating the existence of (symmetric) formal linear representations for (symmetric) matrices $\ur$ of rational expressions in $x$ over $\bbR$.

\subsection{Application to Descriptor Realizations of Matrices of Regular NC Rational Expressions}
\label{subsec:regular_case}

We specify now Theorem \ref{thm:FLR_matrices_of_rational_expressions} to the case of matrices of rational expressions that are regular at zero; this result will be used in the proof of Theorem \ref{thm:rep_min}.

\begin{thm}\label{thm:realizations_matrices_of_rational_expressions}
For each $d_1 \times d_2$-matrix $\ur$ of regular rational expressions in $x=(x_1,\dots,x_g)$ over $\bbK$, the following statements hold true:
\begin{itemize}
 \item[(i)] The matrix $\ur$ admits a monic descriptor realization of the form $$\br(x) = D + C (I - L_A(x))^{-1} B,$$ where the feed through term $D \in M_{d_1 \times d_2}(\bbK)$ can be prescribed arbitrarily, which enjoys the following property:
\begin{quote}
If $\cA$ is a unital $\bbK$-algebra, then
$$\dom_\cA(\ur) \subseteq \dom_\cA(\br)$$
and
$$\ur(X) = \br(X) \qquad\text{if $X \in \dom_\cA(\ur)$}.$$
\end{quote}
 \item[(ii)] If $\ur$ is of square type (i.e., $d:=d_1=d_2$) and additionally selfadjoint (resp. symmetric), then $\ur$ admits a selfadjoint (resp. symmetric) descriptor realization of the form $$\br(x) = \Delta + \Xi^\ast (M_0 - L_M(x))^{-1} \Xi,$$ where the feed through term $\Delta\in M_d(\bbK)$ can be prescribed arbitrarily, which enjoys the following property:
\begin{quote}
If $\cA$ is a unital complex (resp. real) $\ast$-algebra, then
$$\dom_\cA^\sa(\ur) \subseteq \dom_\cA(\br)$$
and
$$\ur(X) = \br(X) \qquad\text{if $X \in \dom_\cA^\sa(\ur)$}.$$
\end{quote}
\end{itemize}
\end{thm}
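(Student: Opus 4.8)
The plan is to feed the formal‑linear‑representation machinery of Section~\ref{subsec:FLR_matval} into the descriptor formalism, using regularity at $0$ only at the single point where one needs the constant part of a linear pencil to be invertible. I would prove~(i) first. By Theorem~\ref{thm:FLR_matrices_of_rational_expressions} (its real analogue in the real case) the matrix $\ur$ has a matrix‑valued formal linear representation $\rho=(u,Q,v)$ with $Q=Q^{(0)}+Q^{(1)}x_1+\dots+Q^{(g)}x_g$, so that $\dom_\cA(\ur)\subseteq\dom_\cA(Q^{-1})$ and $\ur(X)=-uQ(X)^{-1}v$ for every $X\in\dom_\cA(\ur)$, on every unital $\bbK$‑algebra $\cA$. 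Since $\ur$ is regular at $0$ we have $0\in\dom_\bbK(\ur)\subseteq\dom_\bbK(Q^{-1})$, so $Q^{(0)}=Q(0)$ is invertible. Put $A_j:=-(Q^{(0)})^{-1}Q^{(j)}$, so $Q(x)=Q^{(0)}(I-L_A(x))$; then
$$\br_0(x):=0+C_0\,(I-L_A(x))^{-1}B_0,\qquad C_0:=-u,\quad B_0:=(Q^{(0)})^{-1}v,$$
is a monic descriptor realization with $\br_0(x)=-uQ(x)^{-1}v$. For any unital $\cA$ one has $Q(X)=(Q^{(0)}\otimes1_\cA)(I\otimes1_\cA-L_A(X))$ with $Q^{(0)}\otimes1_\cA$ invertible, hence $\dom_\cA(\br_0)=\dom_\cA(Q^{-1})\supseteq\dom_\cA(\ur)$ and $\br_0(X)=\ur(X)$ on $\dom_\cA(\ur)$.

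To realize $\ur$ with an arbitrarily prescribed feed‑through $D\in M_{d_1\times d_2}(\bbK)$ I would enlarge the state space by $d_2$ and set
$$\br(x):=D+\begin{pmatrix}C_0 & -D\end{pmatrix}\left(I_{d+d_2}-\begin{pmatrix}L_A(x) & 0\\ 0 & 0\end{pmatrix}\right)^{-1}\begin{pmatrix}B_0\\ I_{d_2}\end{pmatrix}.$$
A block computation gives $\br(x)=D+\br_0(x)-D=\br_0(x)$, while $I_{d+d_2}-\operatorname{diag}(L_A(x),0)$ is invertible over $M_{d+d_2}(\bbK)\otimes\cA$ exactly when $I_d\otimes1_\cA-L_A(X)$ is; thus $\dom_\cA(\br)=\dom_\cA(\br_0)\supseteq\dom_\cA(\ur)$ and $\br(X)=\ur(X)$ on $\dom_\cA(\ur)$. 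This is the required monic descriptor realization, and~(i) is proved.

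For~(ii), with $\ur$ square and selfadjoint (resp.\ symmetric), I would start from the selfadjoint (resp.\ symmetric) version of Theorem~\ref{thm:FLR_matrices_of_rational_expressions}, which provides $\rho=(Q,v)$ with $Q$ a selfadjoint affine pencil, $\dom_\cA^\sa(\ur)\subseteq\dom_\cA(Q^{-1})$, and $\ur(X)=-v^\ast Q(X)^{-1}v$ on $\dom_\cA^\sa(\ur)$. Regularity again forces $Q^{(0)}$ invertible, and being selfadjoint it is, by Sylvester's law of inertia, congruent to a signature matrix: $-Q^{(0)}=R^\ast J R$ with $R$ invertible and $J=J^\ast$, $J^2=I$. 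Then $-Q(x)=R^\ast(J-L_{M'}(x))R$ with $M'_j:=R^{-\ast}Q^{(j)}R^{-1}$ selfadjoint, so with $\Xi_0:=R^{-\ast}v$ the expression $\br_0(x):=0+\Xi_0^\ast(J-L_{M'}(x))^{-1}\Xi_0$ is a selfadjoint descriptor realization equal to $-v^\ast Q(x)^{-1}v$, with $\dom_\cA(\br_0)=\dom_\cA(Q^{-1})\supseteq\dom_\cA^\sa(\ur)$ and $\br_0(X)=\ur(X)$ there. To install a prescribed selfadjoint (resp.\ symmetric) feed‑through $\Delta$ I would factor $-\Delta=\Theta^\ast J_\Delta\Theta$ with $J_\Delta$ a signature matrix (of size $\operatorname{rank}\Delta$) via the spectral theorem, and put
$$\br(x):=\Delta+\begin{pmatrix}\Xi_0^\ast & \Theta^\ast\end{pmatrix}\left(\begin{pmatrix}J & 0\\ 0 & J_\Delta\end{pmatrix}-\begin{pmatrix}L_{M'}(x) & 0\\ 0 & 0\end{pmatrix}\right)^{-1}\begin{pmatrix}\Xi_0\\ \Theta\end{pmatrix}.$$
Here the new constant part $\operatorname{diag}(J,J_\Delta)$ is again a signature matrix, all coefficient matrices are selfadjoint, the pencil is invertible precisely when $J-L_{M'}(X)$ is, and a block computation gives $\br(x)=\Delta+\br_0(x)-\Delta=\br_0(x)$, so $\dom_\cA(\br)=\dom_\cA(\br_0)\supseteq\dom_\cA^\sa(\ur)$ and $\br$ agrees with $\ur$ there. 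In the real symmetric case one replaces $\ast$ by $\transpose$ throughout.

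I do not expect a serious obstacle: each step is a direct transcription of the formal‑linear‑representation identities (and their Schur‑complement proofs in Section~\ref{subsec:FLR_matval}) into descriptor language. The one place that needs care is the last step of~(ii), namely installing $\Delta$ while keeping the ``$J$''‑block a genuine signature matrix and the realization selfadjoint (resp.\ symmetric); this is precisely what the congruence $-Q^{(0)}=R^\ast J R$ together with the rank factorization $-\Delta=\Theta^\ast J_\Delta\Theta$ handle. One should also check that regularity is used only to invert $Q^{(0)}$ and that the $\otimes1_\cA$ bookkeeping for domains goes through verbatim on an arbitrary, not necessarily finite, unital algebra $\cA$.
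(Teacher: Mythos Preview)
Your argument for (i) is correct and close to the paper's, with one organizational difference: the paper applies Theorem~\ref{thm:FLR_matrices_of_rational_expressions} to $\ur - D$ rather than to $\ur$, so that the prescribed feed-through $D$ appears automatically and no block enlargement of the state space is needed. Your detour (realize $\ur$ with feed-through $0$, then pad by a constant block to shift to $D$) works just as well; the paper's route is a line shorter.

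For (ii) your construction is also correct, but you omit one genuinely non-trivial verification that the paper singles out explicitly. A ``selfadjoint descriptor realization of $\ur$'' in the sense of Section~\ref{subsec:descriptorrealizations} means, by definition, that $\br$ and $\ur$ are $\bbM(\bbK)$-evaluation equivalent. Your argument, starting from a selfadjoint formal linear representation, only yields $\br(X)=\ur(X)$ for \emph{selfadjoint} $X$, i.e.\ $\bbM(\bbK)_\sa$-equivalence. Upgrading this to full $\bbM(\bbK)$-equivalence requires \cite[Proposition~A.7]{HMV06}, which the paper invokes precisely here and flags in the paragraph preceding the proof as the reason (ii) is ``slightly more intricate'' than (i). Without it, the later use of this theorem in the proof of Theorem~\ref{thm:rep_min}~(ii) via Lemma~\ref{lem:domMin}~\eqref{it:domcutdownMin} (which compares two realizations \emph{of the same rational function}) would not go through.

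Aside from this gap, your route to the selfadjoint form differs from the paper's. You take a selfadjoint FLR of $\ur$ directly and then invoke Sylvester's inertia law to congruently transform $-Q^{(0)}$ to a signature matrix. The paper instead takes a \emph{non}-selfadjoint FLR $(u_0,Q_0,v_0)$ of $\ur-\Delta$, normalizes so that $Q_0^{(0)}=I$, and only then symmetrizes via the anti-diagonal doubling $\bigl(\begin{smallmatrix}0 & \widetilde Q_0^\ast\\ \widetilde Q_0 & 0\end{smallmatrix}\bigr)$ of \eqref{eq:rep_sa}; this produces $M_0=-\bigl(\begin{smallmatrix}0 & I\\ I & 0\end{smallmatrix}\bigr)$, a concrete signature matrix, without appealing to Sylvester. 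Both approaches are fine; the paper's again absorbs the feed-through $\Delta$ at the FLR stage, so your second block enlargement (the $-\Delta=\Theta^\ast J_\Delta\Theta$ trick) is not needed there.
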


We point out that, while (i) is rather straightforward to prove, its selfadjoint counterpart (ii) is slightly more intricate. In particular, it relies crucially on \cite[Proposition A.7]{HMV06}, according to which rational expressions are $\bbM(\bbK)$-evaluation equivalent, if and only if they are $\bbM(\bbK)_\sa$-evaluation equivalent. 

\begin{proof}[Proof of Theorem \ref{thm:realizations_matrices_of_rational_expressions}]
For proving (i), we proceed as follows: by Theorem \ref{thm:FLR_matrices_of_rational_expressions} we can find a matrix-valued formal linear representation $\rho=(u,Q,v)$ of $\ur-D$. Since $0 \in \dom_\cA(\ur) = \dom_\cA(\ur-D)$ holds by the regularity assumption and since we have $\dom_\cA(\ur-D) \subseteq \dom_\cA(Q^{-1})$ due to Definition \ref{def:rep_matval}, we see that the linear pencil $Q$ entails an invertible matrix $Q^{(0)}$. Thus, we may introduce
$$\br_0(x) := - u \big(I + (Q^{(0)})^{-1}Q^{(1)} x_1 + \dots + (Q^{(0)})^{-1}Q^{(g)} x_g\big)^{-1} (Q^{(0)})^{-1}v,$$
which is of the form $C (I - L_A(x))^{-1} B$ with $C=-u$, $B=(Q^{(0)})^{-1}v$ and $A_j = -(Q^{(0)})^{-1}Q^{(j)}$ for $j=1,\dots,g$. Again by Definition \ref{def:rep_matval}, we know that $\dom_\cA(\ur-D) \subseteq \dom_\cA(\br)$ holds for any unital complex algebra $\cA$ and in addition
$$\ur(X) - D = \br_0(X) \qquad\text{for all $X\in \dom_\cA(\ur)$},$$
i.e.
$$\ur(X) = D + C (I - L_A(x))^{-1} B \qquad\text{for all $X\in \dom_\cA(\ur)$}.$$
Since this applies in particular to the case $\cA=M_N(\bbK)$, we see that $\ur$ and $\br$ are equivalent under matrix evaluation and hence power series equivalent, which means that we have found by $\br(x) = D + C (I - L_A(x))^{-1} B$ the desired monic descriptor realization of $\ur$.

For proving (ii), we need some refinement of our previous argument: since $\ur$ is assumed to be regular at zero, we know that for any formal linear representation $\rho_0=(u_0,Q_0,v_0)$ of $\ur-\Delta$, the matrix $Q^{(0)}_0$ appearing in the linear pencil
$$Q_0 = Q^{(0)}_0 + Q^{(1)}_0 x_1 + \dots + Q^{(g)}_0 x_g$$
has to be invertible. Thus, we may form with $\widetilde{Q}^{(j)}_0 := (Q^{(0)}_0)^{-1} Q^{(j)}_0$ for $j=0,\dots,g$ the linear pencil
$$\widetilde{Q}_0 = \widetilde{Q}^{(0)}_0 + \widetilde{Q}^{(1)}_0 x_1 + \dots + \widetilde{Q}^{(g)}_0 x_g \qquad\text{where}\qquad \widetilde{Q}^{(0)}_0 = I.$$
We define in addition $\widetilde{u}_0 := u_0$ and $\widetilde{v}_0 := (Q^{(0)}_0)^{-1} v_0$. Clearly, we obtain via this construction another formal linear representation $\widetilde{\rho}_0 = (\widetilde{u}_0,\widetilde{Q}_0,\widetilde{v}_0)$ of $\ur-\Delta$. If we proceed now with the construction that was presented in \eqref{eq:rep_sa}, this yields a selfadjoint formal linear representation
$$\rho = (Q,v) := \bigg(\begin{pmatrix} 0 & \widetilde{Q}_0^\ast\\ \widetilde{Q}_0 & 0\end{pmatrix}, \begin{pmatrix} \frac{1}{2} \widetilde{u}_0^\ast \\ \widetilde{v}_0\end{pmatrix}\bigg).$$
Now, we continue in analogy to the proof of Item (i): starting with the selfadjoint formal linear representation $\rho = (Q,v)$, which can be seen also as a formal linear representation $(v^\ast,Q,v)$, we introduce
$$\br_0(x) := - v^\ast \big(Q^{(0)} + Q^{(1)} x_1 + \dots + Q^{(g)} x_g\big)^{-1} v,$$
which is of the form $\br_0(x) = \Xi^\ast (M_0 - L_M(x))^{-1} \Xi$ with $\Xi=v$, $M_0 = - Q^{(0)}$, and $M_j = Q^{(j)}$ for $j=1,\dots,g$. Note that indeed $M_0^2 = I$. Finally, we put
$$\br(x) := \Delta + \Xi^\ast (J - L_M(x))^{-1} \Xi.$$

Thus, by construction, we have for any unital complex $\ast$-algebra $\cA$ that
$$\dom_\cA^\sa(\ur) \subseteq \dom_\cA(\br)$$
and
$$\ur(X) = \br(X) \qquad\text{if $X \in \dom_\cA^\sa(\ur)$}.$$

It remains to prove that $\br$ is indeed a realization of $\ur$. However, if applied in the case $\cA = M_N(\bbK)$, the statement above only yields that $\ur$ and $\br$ take the same values on all selfadjoint matrices belonging to the domain of $\ur$, which does not allow to conclude directly that $\ur$ and $\br$ are $\bbM(\bbK)$-evaluation equivalent. In order to reach the desired conclusion, we need to take \cite[Proposition A.7]{HMV06} into account: Let $\widetilde{\ur}$ be any matrix of rational expressions, which represents the rational function that is determined by $\br$. Thus, in other words, $\br$ is a descriptor realization of $\widetilde{\ur}$. Since $\widetilde{\ur}$ and $\br$ are therefore $\bbM(\bbK)$-evaluation equivalent and since $\ur$ and $\br$ are $\bbM(\bbK)_\sa$-evaluation equivalent, we may conclude by transitivity that $\ur$ and $\widetilde{\ur}$ are $\bbM(\bbK)_\sa$-evaluation equivalent. Now, \cite[Proposition A.7]{HMV06} tells us that $\ur$ and $\widetilde{\ur}$ must be even $\bbM(\bbK)$-evaluation equivalent. Hence, again by transitivity, we obtain that $\ur$ and $\br$ are in fact $\bbM(\bbK)$-evaluation equivalent, as desired.
\end{proof}


\section{Evaluations of NC Rational Expressions and Their Realizations}
\label{sec:evaluations}

In the free probability context we are not so much interested in plugging in matrices in our rational functions, but we would like to take operators on infinite dimensional Hilbert spaces as arguments.

As we already alluded to in the Introduction, the domain of our rational functions should be stably finite, otherwise there will be inconsistencies. 
We want to be here a bit more precise on this.

\subsection{Stably Finite Algebras}
\label{subsec:stably_finite}

A \df{stably finite algebra $\cA$} is one with the following property
for each $n\in\NN$: every $A\in M_n(\cA)$ 
with  
either a left inverse or a right inverse has an inverse; i.e., if we have $A,B\in M_n(\cA)$, then $AB=1$ implies $BA=1$.
Sometimes ``stably finite'' is also addressed as ``weakly finite''.
These are suitable for free probability, since there we are usually working
in a context, where we have a faithful trace.

\begin{lem}
\label{lem:trace}
A  unital $C^*$-algebra $\cA$ with a faithful trace $\tau$ is stably finite.
\end{lem}

The lemma is not surprising to those in the area, though we could not find a pinpoint reference in the literature, hence we include its proof (which is short). Dima Shlyakhtenko pointed out to us that this holds not just for a type II factor but also for its affiliated algebra of unbounded operators.

\begin{proof}[Proof of Lemma \ref{lem:trace}]
This is a standard fact in operator algebras; see for example \cite{RLL00} where it shows up as an exercise. 
For the convenience of the reader, let us give a rough outline of proof. First of all, since we can extend the trace $\tau$ on $\cA$ in the canonical way to a faithful trace $\tr_n\otimes\tau$ on $M_n(\cA)$, it suffices to prove the required property for $n=1$. (Here $\tr_n$ denotes the normalized trace on $n\times n$-matrices.) 

\begin{enumerate}
\item
If $U\in\cA$ is an isometry, i.e., $U^*U=1$, then it is unitary, i.e., also $UU^*=1$. This follows because we have $0 = \tau( 1 - U^*U) = \tau( 1 - UU^*)$ and then the faithfulness of $\tau$ implies $1- UU^* =0$.

\item
Consider $A,B\in B(H)$ with $AB=1$ and $A=A^*$. Then $B^*A=1$, hence
$B^*=B^*AB=B$, and thus $BA=1$. Thus $B$ is invertible.

\item
Consider now arbitrary $A,B\in\cA$ with $AB=1$. By polar decomposition, we can write $B=UP$ with $U$ a partial isometry and $P\geq 0$, in particular $P^*=P$. Note that a priori $U$ might not be in $\cA$, but only in $B(H)$; in contrast, $P\in\cA$ is automatically satisfied, since in fact $P=(A^\ast A)^{1/2}$ holds with the positive square root defined via the continuous functional calculus.
In any case we have then $AUP=AB=1$. So $P$ has a left-inverse and thus by the previous item also a right inverse and hence is invertible in $B(H)$. Using the continuous functional calculus again, we see that this inverse must belong to $\cA$. Then $U=BP^{-1}$ belongs also to $\cA$ and must also be an isometry. By the first item, it must then be a unitary. Hence $B$ is invertible, i.e., we also have $BA=1$.
\end{enumerate}
\end{proof}

The next lemma provides an important characterization of stably finite algebras, which we will use below.

\begin{lem}
\label{lem:diagNotInv}
Suppose $\cA$ is a unital (complex or real) algebra.
Then the statement
\begin{quote}
{ \it If a $k \times k$ block triangular matrix $G$ with entries
$G_{ij}$ in $M_{m_i \times m_j}(\cA)$ for some $m_1,\dots,m_k\in\NN$ is invertible, then all its diagonal entries
$G_{ii} \in M_{m_i}(\cA)$ must be invertible.}
\end{quote}
holds if and only if $\cA$ is stably finite.
\end{lem}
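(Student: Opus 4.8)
The plan is to prove the two implications separately; throughout I will use the elementary isomorphism $M_n(M_m(\cA)) \cong M_{nm}(\cA)$, which shows that $M_m(\cA)$ inherits stable finiteness from $\cA$ (a left or right inverse of $T\in M_n(M_m(\cA))$ is a left or right inverse of $T$ viewed as an element of $M_{nm}(\cA)$, so it is two-sided there, hence two-sided in $M_n(M_m(\cA))$).

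For the implication "$\cA$ stably finite $\Rightarrow$ the property holds", I would let $G=(G_{ij})$ be an invertible block triangular matrix and argue by induction on $k$. Without loss of generality $G$ is block \emph{upper} triangular: the block lower triangular case follows by conjugating with the invertible permutation matrix that reverses the order of the $k$ diagonal blocks, which turns a block lower triangular matrix into a block upper triangular one whose diagonal blocks are the original ones in reversed order. The case $k=1$ is trivial. For the inductive step write $G=\begin{pmatrix} G_{11} & R\\ 0 & G'\end{pmatrix}$, where $G'$ is the $(k-1)\times(k-1)$ block upper triangular matrix on rows and columns $2,\dots,k$ (with diagonal blocks $G_{22},\dots,G_{kk}$) and $R\in M_{m_1\times(m_2+\dots+m_k)}(\cA)$ is the remaining block. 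Partition a two-sided inverse $H=\begin{pmatrix} P & Q\\ S & W\end{pmatrix}$ of $G$ conformally. Reading off the $(1,1)$ block of $HG=I$ gives $PG_{11}=I_{m_1}$, so $G_{11}$ has a left inverse in $M_{m_1}(\cA)$; reading off the $(2,2)$ block of $GH=I$ gives $G'W=I$, so $G'$ has a right inverse in $M_{m_2+\dots+m_k}(\cA)$. Stable finiteness of $\cA$ (applied in the appropriate matrix algebra) makes both $G_{11}$ and $G'$ invertible, and the inductive hypothesis applied to $G'$ gives invertibility of $G_{22},\dots,G_{kk}$.

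For the converse, "the property holds $\Rightarrow$ $\cA$ stably finite", I would fix $n\in\NN$ and $A,B\in M_n(\cA)$ with $AB=1$, and produce an invertible block triangular witness whose diagonal blocks are $A$ and $B$. Since $AB=1$, the element $BA$ is idempotent, so $C:=1-BA$ satisfies $C^2=C$, and moreover $AC=A-ABA=0$ and $CB=B-BAB=0$. Consider $G:=\begin{pmatrix} B & C\\ 0 & A\end{pmatrix}\in M_2(M_n(\cA))\cong M_{2n}(\cA)$, a block upper triangular matrix. A short computation using only $AB=1$, $AC=0$, $CB=0$, $C^2=C$ shows that $\begin{pmatrix} A & 0\\ C & B\end{pmatrix}$ is a two-sided inverse of $G$, so $G$ is invertible. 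By hypothesis its diagonal blocks $B$ and $A$ are then invertible in $M_n(\cA)$; but $A$ invertible together with $AB=1$ forces $B=A^{-1}$ and hence $BA=1$. As $n$ and $A,B$ were arbitrary, $\cA$ is stably finite.

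I do not anticipate a real obstacle: this is all linear algebra over a noncommutative ring. The only points requiring care are bookkeeping the block sizes so that each "left/right inverse" statement lands in the precise matrix algebra $M_{m_i}(\cA)$ (resp.\ $M_{2n}(\cA)$) in which stable finiteness is invoked, and choosing the reverse-direction witness to be genuinely triangular in the sense of the statement — which is why I use the upper triangular $\begin{pmatrix} B & C\\ 0 & A\end{pmatrix}$ rather than a lower triangular variant, so that the argument works regardless of whether "block triangular" is read as "upper", "lower", or "either".
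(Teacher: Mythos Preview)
Your proof is correct and follows essentially the same approach as the paper's. Both directions match: the forward direction uses a $2$-block partition and induction (the paper allows an arbitrary split respecting the block structure, you take the $1\times(k-1)$ split), and the converse direction constructs an invertible $2\times 2$ block upper triangular witness with the one-sided inverses on the diagonal---the paper uses $\begin{pmatrix} R & I\\ 0 & T\end{pmatrix}$ with inverse $\begin{pmatrix} T & -I\\ I-RT & R\end{pmatrix}$ where $TR=I$, while you use $\begin{pmatrix} B & 1-BA\\ 0 & A\end{pmatrix}$, which is the same idea with a different (equally valid) off-diagonal choice.
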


\begin{proof}
First prove that stably finite is necessary.
Suppose $R,T \in M_{m}(\cA)$
satisfy $TR =I$ and define
\begin{equation}
G= \bmat
R & I \\
0 & T
\emat
\qquad \text{and} \qquad
W=  \bmat
T       & -I \\
I-  RT & R
\emat
\end{equation}
Then $G $ and $W$ are inverses, but
if $\cA$ is not stably finite, then for some $m$ there exist
such $T$ and $R$ in $M_{m}(\cA)$ which are not invertible.

Next prove stably finite is sufficient.
To treat
\begin{equation}
\label{eq:triG}
 G=\bmat
 G_{11}& * & * & *\\
0 & G_{22} & * & * \\
0 &  0 & \ddots & * \\
0 &  0 & 0& G_{kk}
\emat
\end{equation}
we  shall successively partition $G$ into 2  blocks
which respects the given block structure
and
also partition the inverse $W$ of $G$ conformably with $G$:
$$
G= \bmat
R & S \\
0 & T
\emat
\qquad \text{and} \qquad
W=  \bmat
A     & B \\
C & D
\emat.
$$
Calculate that $GW=I$ implies $TD=I$, hence by stable finiteness $T$ and $D$
are inverses.
That $WG=I$ implies $AR=I$, hence by stable finiteness $A$ and $R$
are inverses. Thus both $R,S$ are invertible
and since $R,T$ are both block triangular we face a similar problem
but fortunately with a lower number of blocks.
Moreover, it is easy to see that
\begin{equation}
\label{eq:invG}
G^{-1}= W= \bmat
R^{-1} & -R^{-1} S T^{-1} \\
0 & T^{-1}
\emat.
\end{equation}

Given $G$ in \eqref{eq:triG} apply this decomposition
argument  to $R,T$  and then to subblocks of them, ultimately
obtaining  the $G_{ii}$ are invertible.
\end{proof}

\subsection{Evaluation Equivalence on Stably Finite Algebras}
\label{sect:2-4}

The following theorem states that relations in the free field 
are valid in any stably finite algebra in our languages of $\cA$-equivalences.
   
\begin{thm}
\label{thm:sameOnX}~

\begin{enumerate}

\item\label{it:sameOnX} If $\widetilde r$ and $r$ are $\bbM(\bbK)$-evaluation equivalent rational expressions over the field $\bbK$ of real or complex numbers, then $\widetilde r$ and $r$ are $\cA$-equivalent provided $\cA$ is a stably finite $\bbK$-algebra.

\item\label{it:stably_finite} If $\cA$ is not stably finite then there exist rational expressions $\widetilde r$ and $r$, which are $\bbM(\bbK)$-evaluation equivalent, and $X\in\cA$ with
$X\in \dom_\cA(r)\cap \dom_\cA(\widetilde r)$, but $r(X)\not=\widetilde r(X)$.

\end{enumerate} 
\end{thm}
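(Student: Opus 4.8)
The plan splits along the two parts; part \eqref{it:stably_finite} is the elementary direction and amounts to the example already flagged in the Introduction. Since $\cA$ is not stably finite there are, by definition, an $n\in\NN$ and matrices $A,B\in M_n(\cA)$ with $AB=I_n$ but $BA\neq I_n$, and I would exhibit the counterexample over the algebra $\cB:=M_n(\cA)$ (the pertinent ambient algebra here, since the whole $\bbM(\bbK)$-setup already ranges over matrix algebras over the base; when $\cA$ itself is not directly finite one may simply take $n=1$). Consider $r:=x_1(x_2x_1)^{-1}x_2$ and $\widetilde r:=1$ in $x=(x_1,x_2)$. In $\Klx$ both variables are invertible, so $r=x_1x_1^{-1}x_2^{-1}x_2=1=\widetilde r$; equivalently, whenever a tuple of square matrices $(X_1,X_2)$ has $X_2X_1$ invertible, then $X_1$ and $X_2$ are invertible (matrix algebras over a field are directly finite), whence $r(X_1,X_2)=I=\widetilde r(X_1,X_2)$. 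Thus $r$ and $\widetilde r$ are $\bbM(\bbK)$-evaluation equivalent. Yet at the point $(B,A)$ we have $x_2x_1\mapsto AB=I_n$, so $(B,A)\in\dom_\cB(r)\cap\dom_\cB(\widetilde r)$, while $r(B,A)=B(AB)^{-1}A=BA\neq I_n=\widetilde r(B,A)$.

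For part \eqref{it:sameOnX} I would first reduce to a statement about a single expression that is equivalent to $0$: given $\bbM(\bbK)$-evaluation equivalent $r,\widetilde r$, set $s:=r-\widetilde r$, so that $\dom_\cA(s)=\dom_\cA(r)\cap\dom_\cA(\widetilde r)$ for every unital $\bbK$-algebra $\cA$ by Definition~\ref{def:ev_dom}; it then suffices to prove $s(X)=0$ for every $X\in\dom_\cA(s)$ whenever $\cA$ is stably finite. Here $s$ is $\bbM(\bbK)$-evaluation equivalent to the zero polynomial and non-degenerate (this is implicit in $r,\widetilde r$ being genuine rational expressions representing a common rational function; their common matrix domain, and hence $\dom_{\bbM(\bbK)}(s)$, is non-empty). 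By Theorem~\ref{thm:existence-flr} pick a formal linear representation $\rho=(u,Q,v)$ of $s$, with $Q=Q^{(0)}+Q^{(1)}x_1+\dots+Q^{(g)}x_g$ an $n\times n$ linear pencil over $\bbK$; thus, over every unital $\bbK$-algebra $\cB$, one has $\dom_\cB(s)\subseteq\dom_\cB(Q^{-1})$ and $s(Y)=-uQ(Y)^{-1}v$ for $Y\in\dom_\cB(s)$.

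The heart of the argument is to turn ``$s=0$'' into a factorization with \emph{polynomial} -- not merely rational -- entries. Evaluating at a matrix point of $\dom_{\bbM(\bbK)}(s)$ shows that the linear matrix $Q$ is full over $\bbK\langle x\rangle$ (a linear matrix invertible at a matrix point cannot factor through a smaller dimension), so by Cohn's theorem -- invertibility over the free field being equivalent to fullness; see \cite{Co71,Co06} -- $Q$ is invertible in $M_n(\Klx)$, and comparison with the zero function gives $uQ^{-1}v=0$ in $\Klx$. Consequently the $(n+1)\times(n+1)$ linear matrix
\[
\Gamma:=\begin{pmatrix} Q & v\\ u & 0\end{pmatrix}
\]
annihilates, over $\Klx$, the column vector with entries $Q^{-1}v$ and $-1$, so $\Gamma$ is not invertible over $\Klx$, hence -- again by Cohn's theorem -- not full over $\bbK\langle x\rangle$: there are polynomial matrices $G_1\in M_{(n+1)\times n}(\bbK\langle x\rangle)$ and $G_2\in M_{n\times(n+1)}(\bbK\langle x\rangle)$ with $\Gamma=G_1G_2$. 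Now fix $X\in\dom_\cA(s)$; then $Q(X)$ is invertible in $M_n(\bbK)\ot_\bbK\cA\cong M_n(\cA)$, applying $\ev_X$ to $\Gamma=G_1G_2$ gives $\Gamma(X)=G_1(X)G_2(X)$ in $M_{n+1}(\cA)$, and the Schur complement formula \eqref{eq:Schur} provides invertible unipotent matrices $E,F$ over $\cA$ (with entries involving only $Q(X)^{-1}$, $u$, $v$) such that $E\,\Gamma(X)\,F=\operatorname{diag}\big(Q(X),-uQ(X)^{-1}v\big)=\operatorname{diag}\big(Q(X),s(X)\big)$. Writing $E\,G_1(X)=\begin{pmatrix} P_1\\ p_2\end{pmatrix}$ ($P_1\in M_n(\cA)$, $p_2\in M_{1\times n}(\cA)$) and $G_2(X)\,F=\begin{pmatrix} R_1 & r_2\end{pmatrix}$ ($R_1\in M_n(\cA)$, $r_2\in M_{n\times 1}(\cA)$), block multiplication forces $P_1R_1=Q(X)$, $P_1r_2=0$ and $p_2r_2=s(X)$. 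Since $P_1R_1=Q(X)$ is invertible, $P_1$ has a right inverse in $M_n(\cA)$ and hence, as $\cA$ is stably finite, is invertible; therefore $r_2=P_1^{-1}(P_1r_2)=0$ and $s(X)=p_2r_2=0$, i.e.\ $r(X)=\widetilde r(X)$.

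I expect the only genuine obstacle to be the invocation of Cohn's theorem in passing from ``$s=0$ in $\Klx$'' to the polynomial factorization $\Gamma=G_1G_2$; everything after that is a computation with the Schur complement formula. Stable finiteness enters at exactly one place, to upgrade the one-sided invertibility of $P_1$ -- which is forced by the invertibility of $Q(X)$, itself guaranteed by $\dom_\cA(s)\subseteq\dom_\cA(Q^{-1})$ -- to genuine invertibility; without it the factorization would only yield that $s(X)$ is non-invertible, which, as part \eqref{it:stably_finite} shows, need not imply $s(X)=0$.
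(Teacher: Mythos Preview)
Your argument for part \eqref{it:sameOnX} is correct and follows the same Schur-complement-plus-stable-finiteness route as the paper. The one structural difference is how non-fullness of $\Gamma$ is obtained: you invoke Cohn's theorem directly, whereas the paper, restricting to the regular case, builds a monic descriptor realization from the formal linear representation and then uses the explicit cutdown decomposition (Remark~\ref{rem:hollow}) to exhibit the Sys matrix as \emph{hollow}, hence visibly non-full, without appealing to Cohn. Your route is shorter and covers the non-regular case as well, at the price of the black-box citation; the paper's route is self-contained for regular expressions but does not extend beyond that without Cohn. The endgame (factor, conjugate by the Schur-complement unipotents, use stable finiteness to invert the top-left block, conclude $s(X)=0$) is the same in both.

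Part \eqref{it:stably_finite} has a genuine gap. The theorem asks for a point $X$ in $\cA^g$, but your counterexample $(B,A)$ lives in $M_n(\cA)^2$. Your parenthetical ``when $\cA$ itself is not directly finite one may simply take $n=1$'' does not save you: there exist algebras that are directly finite but not stably finite, and for those your construction produces nothing in $\cA$. The paper's fix is to trade matrix-valued variables for more scalar variables: take $2n^2$ indeterminates, view them as the entries of two generic $n\times n$ matrices $S,T$, and note that $T(ST)^{-1}S-1=0$ in the free field yields $n^2$ scalar rational expressions in these $2n^2$ variables, each $\bbM(\bbK)$-equivalent to $0$; evaluating at the entries of your $P,Q\in M_n(\cA)$ gives a point in $\cA^{2n^2}$ at which at least one of these expressions is nonzero.
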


This is a special case of Theorem 7.8.3 in the book \cite{Co06}.
A warning is that the terminology and cross-referencing there required to make this
conversion is extensive.
Hence, for the reader's convenience, we now give in the regular case a proof of 
Theorem \ref{thm:sameOnX}.

It turns out that the statement (1) of Theorem \ref{thm:sameOnX} also holds for matrix-valued rational expressions; thus, in particular, we may compare matrix-valued rational expressions and their realizations under evaluation. Since this important result will also be used later, it is conventient to place it first.

\begin{thm}
\label{thm:sameOnX_generalized}~

\begin{enumerate}

 \item\label{it:sameOnX_general} Let $\widetilde r$ and $r$ be $\bbM(\bbK)$-evaluation equivalent regular matrix-valued rational expressions in the variables $x=(x_1,\dots,x_g)$ over the field $\bbK$ of real or complex numbers, then $\widetilde r$ and $r$ are $\cA$-evaluation equivalent provided $\cA$ is a stably finite $\bbK$-algebra.
 
 \item\label{it:sameOnX_descriptor} Let $\ur$ be a $d_1 \times d_2$-matrix of rational expressions in the variables $x=(x_1,\dots,x_g)$, which are all regular at $0$. Consider any descriptor realization 
$$\br(x) = D + C (J - L_A(x))^{-1} B, \qquad \text{where}\qquad L_A(x) = A_1 x_1 + \dots + A_g x_g,$$
say of size $N\times N$, such that $\br$ and $\ur$ are $\bbM(\bbK)$-evaluation equivalent.

Provided that $\cA$ is a stably finite $\bbK$-algebra, then $\ur$ and $\br$ are also $\cA$-evaluation equivalent, i.e., we have
$$\ur(X) = \br(X) \qquad\text{for all $X\in\dom_\cA(\ur) \cap \dom_\cA(\br)$}.$$
\end{enumerate}

\end{thm}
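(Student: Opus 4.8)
The plan is to reduce everything to the scalar statement of Theorem \ref{thm:sameOnX}, part \eqref{it:sameOnX}, by unpacking the matrix-valued expressions into (scalar-valued) matrices of rational expressions and exploiting the convention \eqref{eq:scalar_matrices_commute_with_indeterminates}. First I would treat \eqref{it:sameOnX_general}: given two $\bbM(\bbK)$-evaluation equivalent regular matrix-valued rational expressions $\widetilde r$ and $r$ of size $d_1 \times d_2$, I would use the observation from Section \ref{subsubsec:matrix_rational_functions} (established in Appendix A.4 of \cite{HMV06}) that near any point of its domain a regular matrix-valued rational expression may be represented by a $d_1 \times d_2$ matrix of regular scalar-valued rational expressions. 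The equivalence $\widetilde r \sim r$ then forces the corresponding scalar matrices of rational expressions to be entrywise $\bbM(\bbK)$-evaluation equivalent. For any stably finite $\bbK$-algebra $\cA$ and any point $X \in \dom_\cA(\widetilde r) \cap \dom_\cA(r)$, I would invoke Theorem \ref{thm:sameOnX}\eqref{it:sameOnX} entrywise: each scalar entry expression agrees on its common $\cA$-domain, hence the matrices agree, hence $\widetilde r(X) = r(X)$. The only subtlety is that the two scalar-matrix representatives also must themselves be $\bbM(\bbK)$-equivalent as scalar expressions, which follows from the matrix-evaluation equivalence of $\widetilde r$ and $r$ together with the construction in \cite{HMV06}.

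For part \eqref{it:sameOnX_descriptor}, the key point is that a descriptor realization $\br(x) = D + C(J - L_A(x))^{-1} B$ is itself a matrix-valued rational expression (built from the linear pencil $J - L_A(x)$, its inverse, and scalar-matrix multiplications), and it is regular at $0$; moreover, by \eqref{eq:dom_descriptor_realization}, its $\cA$-domain is exactly $\{X \in \cA^g : J \otimes 1_\cA - L_A(X) \text{ invertible in } M_N(\bbK)\otimes\cA\}$. Since $\br$ and $\ur$ are assumed $\bbM(\bbK)$-evaluation equivalent and both are regular matrix-valued rational expressions (matrices of regular scalar expressions in the case of $\ur$), part \eqref{it:sameOnX_general} applies directly: for any stably finite $\cA$, $\ur$ and $\br$ are $\cA$-evaluation equivalent, i.e. $\ur(X) = \br(X)$ for all $X \in \dom_\cA(\ur) \cap \dom_\cA(\br)$, which is exactly the asserted conclusion.

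I expect the main obstacle to be the reduction from the matrix-valued setting to the scalar setting in a way that rigorously tracks domains. One must be careful that the local representation of a regular matrix-valued rational expression by a scalar matrix of rational expressions — valid only ``near'' a point of the domain — is nonetheless enough to deduce a global evaluation identity on a stably finite algebra. The cleanest route is probably to argue via power series: by Lemma \ref{lem:symR}\eqref{lem:symR0}, each regular matrix-valued rational expression has a minimal monic descriptor realization, and $\bbM(\bbK)$-evaluation equivalence is the same as equality of power series expansions (Proposition A.7 of \cite{HMV06}); so one reduces to comparing a matrix of scalar expressions with a monic descriptor realization, both representing the same data, and then applies Theorem \ref{thm:sameOnX}\eqref{it:sameOnX} entrywise. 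An alternative, perhaps more self-contained, route mimics the regular-case proof of Theorem \ref{thm:sameOnX} that the authors promise to give, using the ``cutting down'' and hollow-matrix machinery of Remark \ref{rem:hollow} together with Lemma \ref{lem:diagNotInv}: write the difference $\ur - \br$ as a realization of $0$, bring its Sys matrix to the hollow form \eqref{eq:sysmat}, and use stable finiteness (via Lemma \ref{lem:diagNotInv}) to conclude that on the common $\cA$-domain the relevant blocks force the difference to vanish. Either way, the heart of the matter is purely the scalar statement plus bookkeeping of domains through the matrix constructions.
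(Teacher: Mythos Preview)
Your handling of part \eqref{it:sameOnX_descriptor} is exactly right and matches the paper: a descriptor realization is a particular regular matrix-valued rational expression, as is a matrix of regular scalar expressions, so \eqref{it:sameOnX_descriptor} is an immediate corollary of \eqref{it:sameOnX_general}.

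For part \eqref{it:sameOnX_general}, however, your primary strategy --- reducing to the scalar Theorem~\ref{thm:sameOnX}\eqref{it:sameOnX} by passing to entrywise scalar representatives --- runs into a genuine circularity. In the paper's logical structure the scalar statement is \emph{derived from} Theorem~\ref{thm:sameOnX_generalized}\eqref{it:sameOnX_general}, not the other way around; the only independent source cited is Cohn's Theorem~7.8.3, which the authors explicitly say is hard to extract. More substantively, even granting the scalar case, your reduction needs, for a given $X\in\dom_\cA(r)$, a matrix $(r_{ij})$ of scalar expressions that is $\bbM(\bbK)$-equivalent to $r$ \emph{and} satisfies $r(X)=(r_{ij}(X))$. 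But the equality $r(X)=(r_{ij}(X))$ is itself an instance of the very theorem you are proving (applied to the pair $r$ and $(r_{ij})$), so the argument bootstraps on itself. The ``near a point'' representation from \cite{HMV06} gives you scalar representatives near points of the \emph{matrix} domain, not near an arbitrary $X$ in a stably finite $\cA$, so it does not close this gap.

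Your ``alternative route'' is essentially what the paper actually does, and it is the correct path: reduce to showing that any regular matrix-valued expression $r$ which is $\bbM(\bbK)$-equivalent to $0$ evaluates to $0$ at every $X\in\dom_\cA(r)$; produce a monic descriptor realization $\br(x)=C(I-L_A(x))^{-1}B$ with zero feed-through via a formal linear representation (Theorem~\ref{thm:existence-flr_matval}), so that $\dom_\cA(r)\subseteq\dom_\cA(\br)$ and $r(X)=\br(X)$; then use Remark~\ref{rem:hollow} to see that the Sys matrix $\Sys(A,B,C,0)$ is hollow, hence factors as a product of strictly rectangular polynomial matrices. Evaluating at $X$ and applying a block LDU decomposition gives a factorization of $\begin{pmatrix}1&0\\0&-r(X)\end{pmatrix}$ through a rank-deficient product over $\cA$; stable finiteness (not Lemma~\ref{lem:diagNotInv} directly, but the ``one-sided inverse is two-sided'' property) then forces $r(X)=0$. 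I would recommend promoting this alternative to your main argument and dropping the entrywise reduction.
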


\begin{proof}
(1) It clearly suffices to check the following for any regular matrix-valued rational expression $r$ which is
 $\bbM(\bbK)$-evaluation equivalent to zero: if $\cA$ is a stably finite $\bbK$-algebra and $X=(X_1,\dots,X_g)\in\cA^g$ is in the domain of $r$, then $r(X)=0$.
In order to prove this we proceed as follows. First of all, Theorem \ref{thm:existence-flr_matval} tells us that we may find a formal linear representation $\rho=(u,Q,v)$ of $r$, say of size $N \times N$. Like in the proof of Theorem \ref{thm:realizations_matrices_of_rational_expressions}, we may use this to build a monic descriptor realization
$$\br(x) = C (I-L_A(x))^{-1} B,$$
with the feed through term $D$ chosen to be $0$, such that (by Definition \ref{def:rep_matval}) the $\cA$-domain of $\br$ contains the domain of $r$ and such that $r$ and $\br$ take the same value when evaluated at any point in the $\cA$-domain of $r$. Thus we can assume that for our fixed $X$ the matrix $\Lambda_A(X) = I-L_A(X)$ is invertible in $M_N(\bbK) \otimes_\bbK \cA$ and that $r(X) = C \Lambda_A(X)^{-1} B$ holds.

Since the monic system $A,B,C,0$ gives a realization of $0$ in the free field, its Sys matrix
$$\Sys(A,B,C,0) =
\begin{pmatrix}
\Lambda_A(x) & B \\ C & 0
\end{pmatrix}$$
cannot be invertible. By results of Cohn \cite{Co06} 
this means that it is not full, i.e. it can be written as 
the product of strictly rectangular matrices with polynomials as entries. 
Since we are working with regular rational expressions, 
we can prove this directly from the machinery we have developed
here, thus allowing the reader not to dig into the copious
writings of Cohn.

First note that Remark \ref{rem:hollow} implies that $\Sys(A,B,C,0)$
can be written as a hollow matrix.
Now hollow matrices factor as
$$\begin{pmatrix}
f_{11} & f_{12} & f_{13}\\
0 & 0 & f_{23}\\
0 & 0 & f_{33}
\end{pmatrix}
=
\begin{pmatrix}
1 & f_{13} \\
0 & f_{23}\\
0 & f_{23}\end{pmatrix}
\begin{pmatrix}
f_{11} & f_{12} & 0\\
0 & 0 & 1
\end{pmatrix}.
$$

This means we have
$$\begin{pmatrix}
\Lambda_A(x) & B \\ C & 0
\end{pmatrix}
= \begin{pmatrix} P_1(x)&0\\ u_1(x)&0
\end{pmatrix}
\begin{pmatrix}
P_2(x)& v_2(x)\\ 0&0
\end{pmatrix},
$$
where $P_1(x)$, $P_2(x)$, $u_1(x)$, $v_2(x)$ are matrices with polynomials as entries.

Replace $x$ by $X$ and apply the block LDU decomposition to get
\begin{align*}
\begin{pmatrix}
1 & 0\\
0 &-C \Lambda_A(X)^{-1} B
\end{pmatrix}
&=
\begin{pmatrix}
\Lambda_A(X)^{-1} & 0\\
-C \Lambda_A(X)^{-1} & 1
\end{pmatrix}
\begin{pmatrix}
\Lambda_A(X) & B\\
C & 0
\end{pmatrix}
\begin{pmatrix}
1 &-\Lambda_A(X)^{-1} B\\
0 & 1
\end{pmatrix}\\
&= \begin{pmatrix}
M(X) & 0\\
m(X) & 0
\end{pmatrix} 
\begin{pmatrix}
N(X) & n(X)\\
0 & 0 
\end{pmatrix}.
\end{align*}

Hence we have
$$
\begin{pmatrix}
1&0\\
0&-r(X)
\end{pmatrix}=
\begin{pmatrix}
1&0\\0&-C \Lambda_A(X)^{-1} B
\end{pmatrix}
= \begin{pmatrix}
M(X) &0\\ m(X)&0
\end{pmatrix} 
\begin{pmatrix}
N(X)  & n(X)\\0&0 
\end{pmatrix},
$$
where $M(X)$, $m(X)$, $N(X)$, $n(X)$ are matrices over $\cA$. This yields then
$$1=M(X) N(X), \quad 0=M(X) n(X),\quad 0=m(X) N(X),\quad -r(X)=m(X) n(X).
$$
Since $\cA$ is stably finite the first equation implies that the square matrices
$M(X)$ and $N(X)$ are inverses of each other, i.e., in particular, invertible; the second equation gives then $n(X)=0$, which finally yields $r(X)=0$.

(2) Item \ref{it:sameOnX_descriptor} is an immediate consequence of the result provided in Item \ref{it:sameOnX_general}, since both matrices of rational expressions and descriptor realizations form a subclass of all matrix-valued rational expressions; see Section \ref{subsubsec:matrices_of_rational_expressions} and Section \ref{subsec:descriptorrealizations}.
\end{proof}

Now, we may return to Theorem \ref{thm:sameOnX}.

\begin{proof}[Proof of Theorem \ref{thm:sameOnX} for regular rational expressions]
The assertion in Item \ref{it:sameOnX} is a special case of Theorem \ref{thm:sameOnX_generalized} \eqref{it:sameOnX_general}.
The statement of Item \ref{it:stably_finite} can be proven as follows: if $\cA$ is not stably finite then there exists square $n\times n$ matrices $Q,P$ over $\cA$ such that $PQ=1$, but $QP\not=1$. Let $T$ and $S$ be $n\times n$ matrices over the free field with indeterminate entries. We have then $T(ST)^{-1}S-1=0$. This gives $n^2$ equations in the entries of $S$, $T$ and $(ST)^{-1}$; all of which are 0 in the free field. However, not all of them are true in our algebra $\cA$, though all expressions make sense there.
\end{proof}

\subsection{Maximality of the Domain of Minimal Descriptor Realizations}
\label{subsec:minimality_implies_maximal_domain}

Let us consider a descriptor realization of size $d_1 \times d_2$ like in \eqref{eq:descrN}, i.e.,
$$
\dr{r}(x) = D +  C ( J -  L_A(x) )^{-1} B \qquad \text{with} \qquad L_A(x) = A_1 x_1 + \dots + A_g x_g.
$$
Recall from Lemma \ref{lem:cutdown0} that the associated monic system $JA, JB, C,D $ has the block decomposition given in \eqref{eq:basiccut} as
$$
JA =
\begin{pmatrix}
\hat A_{11}&     \hat A_{12} &      A_{12}^1  \\
0 &     \check A &      A_{12}^2  \\
0 & 0  &     A_{22}
\end{pmatrix},
\qquad
JB=
\bmat
\hat B_1 \\
\check B
\\
0
\emat,
\quad
C = \bmat  0  & \check C  &  C_2
\emat
$$
with respect to the subspace decomposition $ \hat\cQ^\perp +  \; \hat \cQ + \; \cS^\perp $ of $\bbK^d$, in which the minimal monic descriptor system $\check A, \check B, \check C, D$ appears.

\begin{lem}\label{lem:cutdown1}
Suppose that $\cA$ is a stably finite algebra. Then it holds true that
\begin{equation}
\label{eq:domcutdown}
\domA{[I - L_{JA}(x)]^{-1} } \subseteq
\domA{[I -  L_{\check A}(x)]^{-1} }
\end{equation}
and the $d_1 \times d_2$ descriptor realizations
\begin{equation*}
    \dr{r}(x) = D+C(I-L_{JA}(x))^{-1} JB \qquad\text{and}\qquad
    \check{\dr{r}}(x) =  D +\check{C}(I-L_{\check{A}}(x))^{-1}\check{B},
\end{equation*}
take the same values on any $X \in \domA{\dr{r}}$.
\end{lem}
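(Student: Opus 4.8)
The plan is to exploit the block upper-triangular shape of the monic system matrix $JA$ recorded in Lemma~\ref{lem:cutdown0}, carry it through evaluation, and then invoke the stably finite hypothesis via Lemma~\ref{lem:diagNotInv}.

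First I would fix a point $X = (X_1,\dots,X_g) \in \domA{[I - L_{JA}(x)]^{-1}}$, which means that $T := I \otimes 1_\cA - L_{JA}(X) \in M_d(\bbK) \otimes_\bbK \cA$ is invertible. Since $JA$ is block upper-triangular with respect to the decomposition $\hat\cQ^\perp + \hat\cQ + \cS^\perp$ of $\bbK^d$ from \eqref{eq:basiccut}, the evaluation rule \eqref{eq:pencil_eval-1} makes $T$ block upper-triangular with the three diagonal blocks $I - L_{\hat A_{11}}(X)$, $I - L_{\check A}(X)$ and $I - L_{A_{22}}(X)$. As $\cA$ is stably finite, Lemma~\ref{lem:diagNotInv} applies to the invertible matrix $T$ and shows that each of these diagonal blocks is invertible; in particular $I - L_{\check A}(X)$ is invertible, i.e., $X \in \domA{[I - L_{\check A}(x)]^{-1}}$. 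Since $X$ was arbitrary, this establishes the inclusion \eqref{eq:domcutdown}, and since $\domA{\dr{r}} = \domA{[I - L_{JA}(x)]^{-1}}$ by \eqref{eq:dom_descriptor_realization}, it also shows that $\check{\dr{r}}(X)$ is defined for every $X \in \domA{\dr{r}}$.

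Next I would compute $\dr{r}(X)$ using this block structure. Applying the inversion formula \eqref{eq:invG} twice to the $3\times 3$ block decomposition of $T$ --- which is legitimate precisely because stable finiteness has just forced the relevant subblocks invertible --- shows that $T^{-1}$ is again block upper-triangular, with diagonal blocks $[I - L_{\hat A_{11}}(X)]^{-1}$, $[I - L_{\check A}(X)]^{-1}$ and $[I - L_{A_{22}}(X)]^{-1}$. Writing $C = \begin{pmatrix} 0 & \check C & C_2 \end{pmatrix}$ and $JB = \begin{pmatrix} \hat B_1 \\ \check B \\ 0 \end{pmatrix}$ in the same block coordinates, multiplying out $C T^{-1} (JB)$ discards almost everything: the leading zero in $C$ kills the first block-component of $T^{-1}(JB)$, the trailing zero in $JB$ together with the block-upper-triangular shape of $T^{-1}$ kills its last block-component, and the vanishing $(2,1)$ block of $T^{-1}$ leaves $\check C$ paired with $\check B$ only through the middle diagonal block $[I - L_{\check A}(X)]^{-1}$. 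Hence $C T^{-1} (JB) = \check C\, [I - L_{\check A}(X)]^{-1}\, \check B$, and adding the common feed-through term $D$ yields $\dr{r}(X) = D + \check C\, [I - L_{\check A}(X)]^{-1}\, \check B = \check{\dr{r}}(X)$, as claimed.

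The only step that is not pure bookkeeping, and the one I would treat with care, is the passage from ``$T$ invertible'' to ``the diagonal blocks of $T$, and hence of $T^{-1}$, are invertible and $T^{-1}$ is itself block-triangular'': over a general algebra this can fail, and it is precisely Lemma~\ref{lem:diagNotInv} together with the explicit formula \eqref{eq:invG} that supplies it in the stably finite case. Everything else reduces to tracking the block decomposition of Lemma~\ref{lem:cutdown0} through the tensor-product evaluation \eqref{eq:pencil_eval-1}.
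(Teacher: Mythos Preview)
Your proof is correct and follows essentially the same approach as the paper: the block upper-triangular shape of $I - L_{JA}(X)$ from Lemma~\ref{lem:cutdown0} combined with Lemma~\ref{lem:diagNotInv} gives the domain inclusion, and the explicit inverse formula \eqref{eq:invG} applied twice, together with the block forms of $C$ and $JB$, yields equality of values. Your write-up is in fact slightly more explicit than the paper's about how the zeros in $C$ and $JB$ interact with the block-triangular inverse, but the argument is the same.
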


\begin{proof}
The core of the proof of \eqref{eq:basiccut} was sketched
before the lemma statement,
but for literature references
to the triangular form
you can take your choice of
contexts, eg. Theorem 1.4 \cite{CR}
or \cite{BMG05} Theorem 8.2.
In the simplest of cases the notions of  controllability,
observability and also the cutdown goes back
at least to Rudy Kalman \cite{K63}.

Now to $\cA$ domains. The domain inclusion assertion \eqref{eq:domcutdown} is equivalent to saying
\begin{equation}
\Lam_{JA} (X) =
\begin{pmatrix}
\Lam_{\hat  A_{11} } (X)  &    \Lam_{\hat A_{12}} (X)  &
\Lam_{ A_{12}^1 } (X)  \\
0 &    \Lam_{ \check A} (X)  &
    \Lam_{ A_{12}^2 } (X)  \\
0 & 0  &    \Lam_{ A_{22} } (X)
\end{pmatrix}
\end{equation}
is invertible implies
$\Lam_{ \check A} (X) $ must be invertible; note that this uses the notation introduced in \eqref{eq:Lambda-pencil}.
Lemma \ref{lem:diagNotInv} analyses this implication and immediately yields
\eqref{eq:domcutdown}.

That the values of the descriptor realizations $\dr{r}$ and $\check{\dr{r}}$
are equal
at $X \in \cA$ for which they are both defined
is true because \eqref{eq:invG} applied twice implies
\begin{align*}
\dr{r}(X) &= D+ C \Lam_{JA} (X) ^{-1} J B\\
&=
 \bmat  0  & \check C  &  C_2  \emat
\begin{pmatrix}
\Lam_{\hat  A_{11} } (X)^{-1}  &    *  &
*  \\
0 &    \Lam_{ \check A} (X)^{-1}   & *  \\
0 & 0  &    \Lam_{ A_{22} } (X)^{-1}
\end{pmatrix}
\bmat
\hat B_1 \\
\check B
\\
0
\emat
\end{align*}
which clearly equals
$
 D+ \check C \
\Lam_{ \check A} (X)^{-1}  \
\check B =
\check{\dr{r}}(X)$.
\end{proof}

Lemma \ref{lem:cutdown1} combines with the state space similarity theorem formulated in Lemma \ref{lem:symR} and the characterization of stably finite algebras provided in Lemma \ref{lem:diagNotInv} to yield that minimal descriptor realizations have maximal $\cA$-domains, as we now state formally.

\begin{lem}
\label{lem:domMin}\

\begin{enumerate}
\item
\label{it:mindom}
Any two minimal realizations of the same regular matrix-valued rational expression over $\bbK$, which both have the same feed through term,
have the same $\cA$-domain for any unital $\bbK$-algebra $\cA$.

Furthermore, they take the same values on their joint $\cA$-domain.

\item
\label{it:domcutdownMin}
Suppose $J,A,B,C,D$ and $\hJ,\hA,\hB,\hC,D$ are both (selfadjoint/symmetric) descriptor
realizations for the same (selfadjoint/symmetric) matrix-valued rational expression with $\hJ,\hA,\hB,\hC,D$
being minimal.
If $\cA$ is a unital complex/real ($\ast$-)algebra, which is stably finite, then
\begin{equation}
\label{eq:domcutdownMin}
\domA{[J - L_{A}(x)]^{-1}} \subseteq 
\domA{[\hJ - L_{\hA}(x)]^{-1}}
\end{equation}
and the matrix-valued rational expressions 
\begin{equation*}
    \dr{r} = D+C(J-L_A(x))^{-1}B \qquad \text{and} \qquad
    \dr{\hat{r}} = D + {\hC}(\hJ-L_{\hA}(x))^{-1} \hB,
\end{equation*}
take the same values on any $X \in \domA{\dr{r}}$.
\end{enumerate}
\end{lem}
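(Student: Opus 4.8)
The plan is to deduce both parts from the cutdown analysis in Lemma \ref{lem:cutdown1} together with the uniqueness theorems for (symmetric) minimal descriptor realizations. For part \eqref{it:mindom}, suppose $\br = D + C(J-L_A(x))^{-1}B$ and $\widetilde{\br} = D + \widetilde{C}(\widetilde{J}-L_{\widetilde{A}}(x))^{-1}\widetilde{B}$ are two minimal realizations of the same regular matrix-valued rational expression with the same feed through term $D$. First I would pass to monic form as in \eqref{eq:descrNJ}, writing $\br = D + C(I-L_{JA}(x))^{-1}JB$ and similarly for $\widetilde{\br}$; note that the monic cutdowns of already-minimal systems change nothing, so these monic systems are themselves minimal. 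By the state space similarity theorem, Lemma \ref{lem:symR} \eqref{lem:symR0} (b), there is an invertible scalar matrix $S$ with $S(JA_j) = (\widetilde{J}\widetilde{A}_j)S$, $S(JB) = \widetilde{J}\widetilde{B}$, and $C = \widetilde{C}S$. Since $S$ is a scalar invertible matrix, $S \otimes 1_\cA$ intertwines $\Lambda_{JA}(X)$ and $\Lambda_{\widetilde{J}\widetilde{A}}(X)$ for every $X \in \cA^g$, so one is invertible in $M_d(\bbK)\otimes_\bbK\cA$ if and only if the other is; this gives the equality of $\cA$-domains. On that common domain, conjugating by $S\otimes 1_\cA$ inside the realization formula \eqref{eq:eval_descriptor_realization} shows $\br(X) = \widetilde{\br}(X)$.

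For part \eqref{it:domcutdownMin}, let $J,A,B,C,D$ be an arbitrary (selfadjoint/symmetric) descriptor realization and $\widehat{J},\widehat{A},\widehat{B},\widehat{C},D$ a minimal one, both for the same matrix-valued rational expression, with $\cA$ stably finite. I would first apply Lemma \ref{lem:cutdown1} to the system $J,A,B,C,D$: its monic form $JA,JB,C,D$ has the block-triangular decomposition \eqref{eq:basiccut} with a minimal monic cutdown $\check{A},\check{B},\check{C},D$, and Lemma \ref{lem:cutdown1} gives
$$\dom_\cA([I-L_{JA}(x)]^{-1}) \subseteq \dom_\cA([I-L_{\check A}(x)]^{-1})$$
together with the equality of values of $D + C(I-L_{JA}(x))^{-1}JB$ and $D + \check{C}(I-L_{\check A}(x))^{-1}\check{B}$ on $\dom_\cA(\br)$. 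Now $\check{A},\check{B},\check{C},D$ and the monic form of $\widehat{J},\widehat{A},\widehat{B},\widehat{C},D$ are both minimal monic realizations of the same rational expression with the same feed through term, so part \eqref{it:mindom} applies to them: they have the same $\cA$-domain and agree on it. Chaining the inclusion from Lemma \ref{lem:cutdown1} with this equality of domains yields \eqref{eq:domcutdownMin}, and chaining the two value-equalities yields that $\br$ and $\widehat{\br}$ agree on $\dom_\cA(\br)$.

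The one genuinely delicate point is the selfadjoint/symmetric case of the reduction to part \eqref{it:mindom}: the monic cutdown $\check{A},\check{B},\check{C},D$ produced by Lemma \ref{lem:cutdown0} need not be symmetric even when the original system is, as the remark after Proposition \ref{prop:uniquedescriptor} warns. So in that case I would instead invoke Proposition \ref{prop:uniquedescriptor} directly: rather than cutting down to an arbitrary minimal system and then comparing, one compares the given minimal selfadjoint realization $\widehat{J},\widehat{A},\widehat{B},\widehat{C},D$ with the symmetrized minimal cutdown of $J,A,B,C,D$ (whose existence is guaranteed by Lemma \ref{lem:symR} \eqref{lem:symR2} and the construction referenced there, without changing the matrix sizes or $D$), using the similarity $S$ with $S^\transpose\widetilde{J}S = J$ supplied by Proposition \ref{prop:uniquedescriptor}. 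Since $S$ is again a scalar invertible matrix, the same tensoring argument as above shows equality of $\cA$-domains and of values. The rest is bookkeeping: one must check that Lemma \ref{lem:cutdown1}'s value-equality is stated for $D + C(I-L_{JA}(x))^{-1}JB$, which is exactly the monic form of $\br = D + C(J-L_A(x))^{-1}B$, so no mismatch arises; I expect this to be routine.
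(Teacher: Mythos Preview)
Your proposal is correct and follows essentially the same line as the paper: state space similarity for part \eqref{it:mindom}, then Lemma \ref{lem:cutdown1} chained with part \eqref{it:mindom} for part \eqref{it:domcutdownMin}. The only divergence is your treatment of the selfadjoint/symmetric case in part \eqref{it:domcutdownMin}. You flag it as delicate because the monic cutdown $\check A,\check B,\check C,D$ need not be symmetric, and propose to symmetrize it and invoke Proposition \ref{prop:uniquedescriptor}. The paper bypasses this entirely: since part \eqref{it:mindom} is stated and proved for \emph{arbitrary} minimal realizations (not just symmetric ones), the possibly non-symmetric cutdown $\check A,\check B,\check C,D$ can be compared directly via part \eqref{it:mindom} with the given symmetric minimal system $\hJ,\hA,\hB,\hC,D$, which is after all just a minimal realization. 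So your detour through symmetrization and Proposition \ref{prop:uniquedescriptor} works, but is unnecessary; the paper's one-line observation that ``any selfadjoint/symmetric minimal realization is in particular a minimal realization'' suffices.
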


\begin{proof}
The first assertion in Item \ref{it:mindom} is an immediate consequence of the state space similarity theorem formulated in Item \ref{lem:symR0} of Lemma \ref{lem:symR}. Indeed, if
\begin{equation*}
\dr{r} = D + C (J-L_A(x))^{-1} C^{\transpose} \qquad \text{and} \qquad
\widetilde{\dr{r}} = D + \widetilde{C} (\widetilde{J}-L_{\widetilde{A}}(x))^{-1} \widetilde{C}^{\transpose}
\end{equation*}
are both minimal descriptor realizations of the same matrix-valued rational expression $r$, then Item \ref{lem:symR0} of Lemma \ref{lem:symR} guarantees that there is an invertible matrix $S$, which satisfies
$$SJA_j = \widetilde{J} \widetilde{A}_j S \qquad SJC^\transpose = \widetilde{J} \widetilde{C}^\transpose \qquad C = \widetilde{C}S.$$
If we put $\widetilde{S} := \widetilde{J} S J$, which gives as well an invertible matrix, then we may check that
$$\widetilde{S} (J-L_A(x)) = (\widetilde{J}-L_{\widetilde{A}}(x)) S.$$
Indeed, since $SJ L_A(x) = \widetilde{J} L_{\widetilde{A}}(x) S$, we obtain
$$SJ (J-L_A(x)) = S - SJ L_A(x) = S - \widetilde{J} L_{\widetilde{A}}(x) S = \widetilde{J} (\widetilde{J}-L_{\widetilde{A}}(x)) S$$
and hence $\widetilde{S} (J-L_A(x)) = (\widetilde{J}-L_{\widetilde{A}}(x)) S$, as stated. Thus,
$$\dom_\cA([J-L_A(x)]^{-1}) = \dom_\cA([\widetilde{J}-L_{\widetilde{A}}(x)]^{-1}).$$

The proof of Item \ref{it:domcutdownMin} is based on explicit constructions which can be implemented numerically.
We start with $J,A,B,C,D$ and apply Lemma \ref{lem:cutdown0} to obtain a monic minimal realization $I,\check{A},\check{B},\check{C},D$ satisfying domain inclusion as in \eqref{eq:domcutdown}, i.e.
$$\domA{[I - L_{JA}(x)]^{-1}} \subseteq \domA{[I - L_{\check A}(x)]^{-1}}$$
Furthermore, by Item \ref{it:mindom}, the cut down system $I,\check{A},\check{B},\check{C},\check{D}$ must have the same $\cA$-domain as the given minimal system $\hJ,\hA,\hB,\hC,D$. Thus,
\begin{align*}
\domA{[J-L_A(x)]^{-1}} &= \domA{[I - L_{JA}(x)]^{-1}}\\
                       &\subseteq \domA{[I - L_{\check A}(x)]^{-1}} = \domA{[\hJ - L_{\hA}(x)]^{-1}}.
\end{align*}
Moreover, we know
\begin{itemize}
 \item by Lemma \ref{lem:cutdown1} that $\br$ and $\check{\br}$ take the same value on any $X\in\dom_\cA(\br)$ and
 \item by Item \ref{it:mindom} that $\check{\br}$ and $\hat{\br}$ take the same value on any $X\in\dom_\cA(\check{\br})$,
\end{itemize}
thus
\begin{equation*}
    \dr{r} = D+C(J-L_A(x))^{-1}B \qquad \text{and} \qquad
    \dr{\hat{r}} = D + {\hC}(\hJ-L_{\hA}(x))^{-1} \hB,
\end{equation*}
take the same values on any $X \in \domA{\dr{r}}$.

We conclude by noting that the selfadjoint/symmetric case of Item \ref{it:domcutdownMin} is clearly covered by the more general statement that was proven above, since any selfadjoint/symmetric minimal realization is in particular a minimal realization.
\end{proof}

\subsection{Minimal Descriptor Realizations of NC Rational Expressions}
\label{subsec:minimal_realizations}

Theorem \ref{thm:sameOnX_generalized} \eqref{it:sameOnX_descriptor} tells us that descriptor realizations of rational expressions provide valid identities when evaluated at points which belong both to the domain of the descriptor realization and to the domain of the rational expression. However, one would clearly prefer to work with such descriptor realizations, which can be evaluated on the entire domain of the rational expression without any further constraints.
While formal linear representations have this desirable property by definition, they have the disadvantage of being typically of large size. However, as we are going to prove now, these excellent evaluation properties pass to descriptor realizations in the setting of stably finite algebras and under the assumption of minimality.

\begin{thm}\label{thm:rep_min}
Let $\ur$ be a $d_1 \times d_2$-matrix of regular rational expressions in the variables $x=(x_1,\dots,x_g)$ over $\bbK$. Then the following statements hold true:
\begin{itemize}
 \item[(i)] Each minimal descriptor realization
$$\hat{\br}(x) = D + \hat{C} (\hat{J} - L_{\hat{A}}(x))^{-1} \hat{B},$$
of $\ur$ satisfies the following property:
\begin{quote}
If $\cA$ is a unital $\bbK$-algebra, which is stably finite, then
\begin{equation}
\label{eq:QrealizDom}
\dom_\cA(\ur) \subseteq \dom_\cA(\hat{\br})
\end{equation}
and 
\begin{equation}
\label{eq:QrealizEval}
\ur(X) = \hat{\br}(X) \qquad\text{if $X \in \dom_\cA(\ur)$.}
\end{equation}
\end{quote}
 \item[(ii)] If $\ur$ is of square type (i.e., $d:=d_1=d_2$) and additionally selfadjoint/symmetric, then any selfadjoint/symmetric minimal descriptor realization
$$\hat{\br}(x) = \Delta + \hat{\Xi}^\ast (\hat{M}_0 - L_{\hat{M}}(x))^{-1} \hat{\Xi},$$
of $\ur$ satisfies the following property:
\begin{quote}
If $\cA$ is a unital complex/real $\ast$-algebra, which is stably finite, then
\begin{equation}
\label{eq:QrealizDom_sa}
\dom_\cA^\sa(\ur) \subseteq \dom_\cA(\hat{\br})
\end{equation}
 and 
\begin{equation}
\label{eq:QrealizEval_sa}
\ur(X) = \hat{\br}(X) \qquad\text{if $X \in \dom_\cA^\sa(\ur)$.}
\end{equation}
\end{quote}
\end{itemize}
\end{thm}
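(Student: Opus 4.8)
The plan is to derive Theorem~\ref{thm:rep_min} from the combination of two results already in hand: Theorem~\ref{thm:realizations_matrices_of_rational_expressions}, which over \emph{any} unital algebra produces a descriptor realization of $\ur$ whose domain already contains $\dom_\cA(\ur)$ (resp.\ $\dom_\cA^\sa(\ur)$), and Lemma~\ref{lem:domMin}\eqref{it:domcutdownMin}, which over a \emph{stably finite} algebra compares any descriptor realization with a minimal one carrying the same feed through term. So for part~(i), given the minimal realization $\hat\br(x) = D + \hat C(\hat J - L_{\hat A}(x))^{-1}\hat B$ of $\ur$, I would first apply Theorem~\ref{thm:realizations_matrices_of_rational_expressions}(i) \emph{with the feed through term prescribed to be exactly this $D$}. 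This yields a monic descriptor realization $\br(x) = D + C(I - L_A(x))^{-1}B$ of $\ur$ such that, over every unital $\bbK$-algebra $\cA$, one has $\dom_\cA(\ur)\subseteq\dom_\cA(\br)$ and $\ur(X)=\br(X)$ for all $X\in\dom_\cA(\ur)$ — and crucially, no stable finiteness has been used yet.

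Now I would note that $\br$ and $\hat\br$ are both descriptor realizations of $\ur$ (hence both $\bbM(\bbK)$-evaluation equivalent to $\ur$, so to each other, so realizations of one and the same rational function), that they carry the common feed through term $D$, and that $\hat\br$ is minimal. These are precisely the hypotheses of Lemma~\ref{lem:domMin}\eqref{it:domcutdownMin}, so invoking stable finiteness of $\cA$ gives $\dom_\cA(\br)=\dom_\cA([I-L_A(x)]^{-1})\subseteq\dom_\cA([\hat J-L_{\hat A}(x)]^{-1})=\dom_\cA(\hat\br)$ together with $\br(X)=\hat\br(X)$ on $\dom_\cA(\br)$. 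Chaining the domain inclusions yields \eqref{eq:QrealizDom}, and chaining the evaluation identities yields \eqref{eq:QrealizEval}. For part~(ii) I would run the same argument with the selfadjoint/symmetric versions: Theorem~\ref{thm:realizations_matrices_of_rational_expressions}(ii) supplies a selfadjoint (resp.\ symmetric) descriptor realization $\br(x)=\Delta+\Xi^\ast(M_0-L_M(x))^{-1}\Xi$ of $\ur$ with feed through term prescribed equal to that, $\Delta$, of the given $\hat\br$, satisfying $\dom_\cA^\sa(\ur)\subseteq\dom_\cA(\br)$ and $\ur(X)=\br(X)$ on $\dom_\cA^\sa(\ur)$ over every complex (resp.\ real) $\ast$-algebra; then, since a selfadjoint/symmetric realization is in particular an ordinary descriptor realization, Lemma~\ref{lem:domMin}\eqref{it:domcutdownMin} applies to $\br$ and $\hat\br$ over stably finite $\cA$, and the two inclusions and identities chain exactly as in~(i) to produce \eqref{eq:QrealizDom_sa} and \eqref{eq:QrealizEval_sa}.

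The one genuinely substantive point — the reason this is not a one-line corollary of Lemma~\ref{lem:domMin} — is the \emph{matching of feed through terms}: Lemma~\ref{lem:domMin}\eqref{it:domcutdownMin} only compares two realizations sharing a feed through term, while the minimal realization handed to us in Theorem~\ref{thm:rep_min} has an arbitrary one. This is overcome by exploiting the freedom, already part of the conclusion of Theorem~\ref{thm:realizations_matrices_of_rational_expressions}, to prescribe the feed through term of the auxiliary realization $\br$; after that, everything reduces to bookkeeping. A small automatic point to record along the way is that $\br$ and $\hat\br$ represent the same rational function — for $\br$ this is part of the conclusion of Theorem~\ref{thm:realizations_matrices_of_rational_expressions}, and for $\hat\br$ it is the definition of being a descriptor realization of $\ur$ — so that Lemma~\ref{lem:domMin}\eqref{it:domcutdownMin} is indeed applicable.
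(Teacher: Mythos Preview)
Your proposal is correct and follows essentially the same route as the paper: both proofs obtain an auxiliary realization $\br$ from Theorem~\ref{thm:realizations_matrices_of_rational_expressions} (with feed through term matched to that of the given minimal $\hat\br$) and then apply Lemma~\ref{lem:domMin}\eqref{it:domcutdownMin} to chain domains and evaluations. Your explicit emphasis on the feed-through-term matching is a point the paper handles somewhat implicitly in part~(i) and explicitly in part~(ii), but otherwise the arguments coincide.
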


\begin{proof}
For proving (i), we start with any descriptor realization $\br$ of $\ur$ like in Theorem \ref{thm:realizations_matrices_of_rational_expressions} (i). Then we know
\begin{itemize}
 \item by the properties of $\br$ stated in Theorem \ref{thm:realizations_matrices_of_rational_expressions} (i) that $\dom_\cA(\ur) \subseteq \dom_\cA(\br)$ holds and that we have $$\ur(X) = \br(X) \qquad\text{for all $X \in \dom_\cA(\ur)$}.$$
 \item by Item \ref{it:domcutdownMin} of Lemma \ref{lem:domMin} that $\dom_\cA(\br) \subseteq \dom_\cA(\hat{\br})$ holds and that moreover $$\br(X) = \hat{\br}(X) \qquad\text{for all $X \in \dom_\cA(\br)$}.$$
\end{itemize}
In summary, we get a chain of inclusions $\dom_\cA(\ur) \subseteq \dom_\cA(\br) \subseteq \dom_\cA(\hat{\br})$ proving \eqref{eq:QrealizDom} and we see that $\ur(X) = \br(X) = \hat{\br}(X)$ holds for all $X\in\dom_\cA(\ur)$, which finally shows the validity of \eqref{eq:QrealizEval}.

For proving (ii), we proceed as follows. Since $\ur$ is regular at zero, we may consider besides its selfadjoint minimal realization
$$\hat{\br}(x) = \Delta + \hat{\Xi}^\ast (\hat{J} - L_{\hat{M}}(x))^{-1} \hat{\Xi}$$
any other selfadjoint descriptor realization
$$\br(x) = \Delta + \Xi^\ast (J - L_M(x))^{-1}  \Xi,$$ 
with the prescribed feed through term $\Delta$, as constructed in Theorem \ref{thm:realizations_matrices_of_rational_expressions} (ii). Thus, if $\cA$ is any unital complex/real $\ast$-algebra that is stably finite, we know
\begin{itemize}
 \item by part (ii) of Theorem \ref{thm:realizations_matrices_of_rational_expressions} that $\dom_\cA^\sa(\ur) \subseteq \dom_\cA(\br)$ holds and that we have $$\ur(X) = \br(X) \qquad\text{for any $X \in \dom_\cA^\sa(\ur)$}.$$
 \item by Item \ref{it:domcutdownMin} of Lemma \ref{lem:domMin} that $\dom_\cA(\br) \subseteq \dom_\cA(\hat{\br})$ holds and that moreover $$\br(X) = \hat{\br}(X) \qquad\text{for any $X \in \dom_\cA(\br)$}.$$
\end{itemize}
Combining both observation proves the stated inclusion \eqref{eq:QrealizDom_sa} and also the representation given in \eqref{eq:QrealizEval_sa}.
\end{proof}


\section{Free Probability}
\label{sec:FP}

Free probability theory was invented around 1985 by D. Voiculescu as a tool to attack the isomorphism problem for the free group factors $L(\mathbb{F}_n)$. Although this initial question is still open, free probability gave deep insights to this problem and provides presently many powerful tools which are also applied in other fields of mathematics like random matrix theory.

Roughly speaking, free probability theory can be seen as a highly noncommutative counterpart of classical probability, where the notion of classical independence is replaced by free independence. In the initial example $L(\mathbb{F}_n)$, free independence reflects the structure that is induced on the operator algebraic side by free products on the group side.

Some surprising connection to random matrix theory was found by Voiculescu. He observed that free independence shows up for many classes of independent random matrices in the limit when their dimension tends to infinity.

For more information on free probability in general we refer the reader to the monographs \cite{VDN,HP,NS,MS}.

\subsection{A quick introduction to free probability}
\label{subsec:free_prob_scalar} 

The underlying object of free probability theory are \textbf{noncommutative probability spaces $(\cA,\phi)$}\index{noncommutative probability space!scalar-valued}. In the most general case, namely in a purely algebraic setting, it consists of a unital complex algebra $\cA$ and a linear functional $\phi: \cA\to\CC$ that satisfies $\phi(1_\cA)=1$. Elements of $\cA$ are called \df{noncommutative random variables} and we refer to $\phi$ as \df{expectation} on $\cA$.

Given noncommutative random variables $X_1,\dots,X_g$ in some noncommutative probability space $(\cA,\phi)$, we call the linear mapping
$$\mu_{X_1,\dots,X_g}:\ \CC\langle x_1,\dots,x_g\rangle \to \CC,\quad P \mapsto \phi(P(X_1,\dots,X_g))$$
the \textbf{(joint) noncommutative distribution of $X_1,\dots,X_g$}\index{noncommutative distribution} and we refer to the encoded values $\mu_{X_1,\dots,X_g} (x^w) = \phi(X^w)$ for $w\in\WOR_g$ as the \textbf{mixed moments of $X_1,\dots,X_g$}\index{mixed moments}.
If $\cA$ is even a $\ast$-algebra, the joint noncommutative distribution of $X_1,\dots,X_g,X_1^\ast,\dots,X_g^\ast$ is called the \textbf{(joint) noncommutative $\ast$-distribution of $X_1,\dots,X_g$}\index{noncommutative $\ast$-distribution}.

Free probability is formulated in this language of noncommutative probability theory, but it is characterized by its own notion of independence, which we shall introduce next.

\begin{definition}[Free independence]
Let $(\cA,\phi)$ be a noncommutative probability space and let $(\cA_i)_{i\in I}$ be a family of unital subalgebras of $\cA$ with an arbitrary index set $I\neq\emptyset$. We call $(\cA)_{i\in I}$ \textbf{freely independent}\index{free independence!for subalgebras} (or just \textbf{free}\index{freeness!for subalgebras}), if
$$\phi(X_1 \cdots X_n) = 0$$
holds whenever the following conditions are fulfilled:
\begin{itemize}
\item We have $n\geq 1$ and there are indices $i_1,\dots,i_n\in I$ satisfying $$i_1 \neq i_2, \dots, i_{n-1} \neq i_n.$$
\item For $j=1,\dots,n$, we have $X_j \in \cA_{i_j}$ and it holds true that $\phi(X_j) = 0$.
\end{itemize}
Elements $(X_i)_{i\in I}$ are called \textbf{freely independent}\index{free independence!for elements} (or just \textbf{free}\index{freeness!for elements}), if $(\cA_i)_{i\in I}$ are freely independent in the above sense, where $\cA_i$ denotes for each $i\in I$ the subalgebra of $\cA$ that is generated by $1$ and $X_i$.
\end{definition}

Here, we will mainly work in the more regular setting of $C^\ast$-probability spaces: if $\cA$ is a $C^\ast$-algebra and $\phi: \cA\rightarrow\CC$ a positive state on $\cA$, we call $(\cA,\phi)$ a \textbf{$C^\ast$-probability space}\index{noncommutative probability space!scalar-valued!$C^\ast$-probability space}.

In this case, there corresponds to each element $X=X^\ast \in \cA$ a unique probability measure $\mu_X$ on the real line $\RR$ (compactly supported on the spectrum of $X$) that is determined by the condition that $\mu_X$ has the same moments as $X$ with respect to $\phi$, i.e.
$$\phi(X^k) = \int_\RR t^k\, d\mu_X(t) \qquad\text{for all $k\in\NN$}.$$
We call $\mu_X$ the \df{(analytic) distribution} of $X$. Note that the analytic distribution $\mu_X$ encodes all moments of $X$ and hence the noncommutative distribution of $X$, which justifies to use the same symbol for both.

It was a fundamental observation of Voiculescu, that the distribution $\mu_{X+Y}$ of $X+Y$ for freely independent elements $X,Y\in\cA$ only depends on the distributions $\mu_X$ and $\mu_Y$. Thus, we may write $\mu_{X+Y} = \mu_X \boxplus \mu_Y$ and we call this operation $\boxplus$ the \textbf{free additive convolution}\index{free convolution!additive}.

Only for the sake of completeness, we want to mention that there is another operation $\boxtimes$, the so-called \textbf{multiplicative free convolution}\index{free convolution!multiplicative}, which constitutes a multiplicative counterpart of $\boxplus$. The multiplicative free convolution $\mu_X \boxtimes \mu_Y$ of distributions $\mu_X$ and $\mu_Y$ is defined whenever at least one of the involved operators $X$ and $Y$ is positive. If we suppose for instance that $X$ is positive, then $\mu_X \boxtimes \mu_Y := \mu_{X^{1/2} Y X^{1/2}}$ holds.

The main tool for calculating the free additive convolution is the \df{Cauchy transform}, which is in classical probability (up to sign) also known under the name \df{Stieltjes transform}. The Cauchy transform $G_\mu$ of any probability measure $\mu$ on $\RR$ is the regular function defined by
$$G_\mu(z) := \int_\RR \frac{1}{z-t}\, d\mu(t) \qquad\text{for all $z\in\CC^+$},$$
where $\CC^+ := \{z\in\CC| \Im(z)>0\}$ denotes to upper half-plane. It is easy to check that $G_\mu$ maps $\CC^+$ to the lower half-plane $\CC^- := \{z\in\CC| \Im(z)<0\}$.

Note, that if $\mu_X$ is the distribution of any noncommutative random variable $X=X^\ast\in\cA$ in a $C^\ast$-probability space $(\cA,\phi)$, we have
$$G_{\mu_X}(z) = \int_\RR \frac{1}{z-t}\, d\mu_X(t) = \phi\big((z-X)^{-1}\big) \qquad\text{for $z\in\CC^+$}.$$
Thus, in such cases, we often write $G_X$ instead of $G_{\mu_X}$.

\begin{rem}\label{rem:Stieltjes}
The \df{Stieltjes inversion formula} tells us, that $\mu$ can be recovered from its Cauchy transform $G_\mu: \CC^+\to\CC^-$. In fact, the absolutely continuous probability measures $\mu_\varepsilon$ given by
\begin{equation}\label{eq:Stieltjes}
d\mu_\varepsilon(t) = \frac{-1}{\pi} \Im(G_\mu(t+i\varepsilon))\, dt
\end{equation}
converge in distribution to $\mu$ as $\varepsilon\searrow0$, i.e. we have
$$\lim_{\varepsilon\searrow 0} \int_\RR f(t)\, d\mu_\varepsilon(t) = \int_\RR f(t)\, d\mu(t)$$
for all bounded continuous functions $f: \RR\to\CC$.
\end{rem}

\subsection{Polynomials in free random variables}
\label{subsec:polynomials}

Free probability is deeply connected to random matrix theory. The reason for this surprising link is that free independence turns out to describe the limit behavior of independent random matrices of many types when their size tends to infinity; this phenomenon is known under the name of asymptotic freeness.
More formally, this means that a $g$-tuple $(X_1^{(N)},\dots,X_g^{(N)})$ of such random matrices, say selfadjoint and of size $N\times N$, converge in distribution as $N\to\infty$ to a family $(X_1,\dots,X_g)$ of freely independent elements in some noncommutative probability space. In particular, if any selfadjoint polynomial $P$ in $g$ indeterminates $x_1,\dots,x_g$ is given, then the asymptotic eigenvalue distribution of the random matrix
$$Y^{(N)} := P(X_1^{(N)},\dots,X_g^{(N)})$$
is given by the distribution of the operator
$$Y := P(X_1,\dots,X_g).$$
Thus, starting with a fundamental question in random matrix theory, asymptotic freeness leads us to the some problem that can be formulated in the language of free probability theory:
\begin{quote}
\textit{Suppose that $P\in\CC\langle x_1,\dots,x_g\rangle$ is a selfadjoint noncommutative polynomial and let freely independent noncommutative random variables $X_1,\dots,X_n$ in some $C^\ast$-probability space be given. How can we calculate the analytic distribution of $P(X_1,\dots,X_g)$ out of the given individual distributions $\mu_{X_1},\dots,\mu_{X_g}$?}
\end{quote}
This is a well-posed problem, since the basic theory of free probability tells us that the distribution of $P(X_1,\dots,X_g)$ is indeed uniquely determined by $P$ and $\mu_{X_1},\dots,\mu_{X_g}$.
In the case $g=2$, for the special selfadjoint polynomial $P = x_1 + x_2$, the question is answered by the free additive convolution $\boxplus$, and for $P = x_1 x_2 x_1$, it is settled by the free multiplicative convolution $\boxtimes$.
For general $g$ and $P$, one can introduce analogously the \textbf{free polynomial convolution}\index{free convolution!polynomial}, sometimes written as
$$\mu_{P(X_1,\dots,X_g)} = P^{\square}(\mu_{X_1},\dots,\mu_{X_g}),$$
but the complexity of this operation is considerably higher than for $\boxplus$ and $\boxtimes$.
This problem was treated systematically for the first time in \cite{BMS2013}, where analytic methods from operator-valued free probability theory were combined with linearization techniques.
Subsequently, in \cite{BSS2015}, this approach was extended to the case of non-selfadjoint polynomials, where the so-called Brown measure served as a suitable replacement for the analytic distribution.
Our main goal is to generalize these two approaches from polynomials to rational expressions with the help of descriptor realizations.
For that purpose, we are going to provide in the next subsections some more details about both operator-valued free probability and the Brown measure.

\subsection{Operator-valued free probability and subordination}

Compared to the scalar-valued setting of free probability as presented in Section \ref{subsec:free_prob_scalar}, the main novelty in the operator-valued frame is that expectations get replaced by some natural noncommutative analogues of conditional expectations in classical probability.

An \textbf{operator-valued $C^\ast$-probability space $(\cA,E,\cB)$}\index{noncommutative probability space!operator-valued $C^\ast$-probability space} consists of a unital $C^\ast$-algebra $\cA$, a unital $C^\ast$-subalgebra $\cB$ of $\cA$, and a \textbf{conditional expectation $E: \cA\to\cB$}\index{expectation!conditional}, i.e., a (completely) positive and unital map $E: \cA\to\cB$ satisfying
\begin{itemize}
  \item $E[B] = B$ for all $B\in\cB$ and
  \item $E[B_1 X B_2] = B_1 E[X] B_2$ for all $X\in\cA$, $B_1,B_2\in\cB$.
\end{itemize}
The definition of free independence in the operator-valued setting reads as follows.

\begin{definition}[Free independence with amalgamation]
Let $(\cA,E,\cB)$ be an operator-valued $C^\ast$-probability space and let $(\cA_i)_{i\in I}$ be a family of subalgebras $\cB \subseteq \cA_i \subseteq \cA$ with an arbitrary index set $I\neq\emptyset$. We call $(\cA)_{i\in I}$ \textbf{freely independent with amalgamation over $\cB$}\index{free independence with amalgamation!for subalgebras} (or just \textbf{free over $\cB$}\index{freeness with amalgamation!for subalgebras}), if
$$E[X_1 \cdots X_n] = 0$$
holds whenever the following conditions are fulfilled:
\begin{itemize}
\item We have $n\geq 1$ and there are indices $i_1,\dots,i_n\in I$ satisfying $$i_1 \neq i_2, \dots, i_{n-1} \neq i_n.$$
\item For $j=1,\dots,n$, we have $X_j \in \cA_{i_j}$ and it holds true that $E[X_j] = 0$.
\end{itemize}
Elements $(X_i)_{i\in I}$ are called \textbf{freely independent with amalgamation over $\cB$}\index{free independence with amalgamation!for elements} (or just \textbf{free with amalgamation over $\cB$}\index{freeness with amalgamation!for elements}), if $(\cA_i)_{i\in I}$ are freely independent with amalgamation over $\cB$ in the above sense, where $\cA_i$ denotes for each $i\in I$ the subalgebra of $\cA$ that is generated by $\cB$ and $X_i$.
\end{definition}

\begin{rem}\label{freeness_matrices}
For our purposes, it is important to note that operator-valued $C^\ast$-probability spaces can easily be constructed by passing to matrices over scalar-valued $C^\ast$-probability spaces. Indeed, if $(\cC,\phi)$ is any $C^\ast$-probability space, then
$$\cA := M_N(\CC)\otimes \cC, \qquad \cB := M_N(\CC), \qquad\text{and}\qquad E := \id_{M_N(\CC)} \otimes \phi$$
give an operator-valued $C^\ast$-probability space $(\cA,E,\cB)$; note that the natural identification $M_N(\CC) \cong M_N(\CC) \otimes 1_\cC$ allows us to treat $\cB$ as an actual subalgebra of $\cA$. Moreover, a direct calculation shows that if $(\cC_i)_{i\in I}$ is a family of freely independent subalgebras of $\cC$, then $\cA_i := M_N(\CC) \otimes \cC_i$ for $i\in I$ defines a family $(\cA_i)_{i\in I}$ of subalgebras of $\cA$, which is freely independent with amalgamation over $\cB$.
\end{rem}

We stress that free probability theory possesses, both in the scalar- and operator-valued setting, a rich combinatorial structure given by non-crossing partitions. Since this approach will not be used in the following, we do not go into details here; the interested reader is referred to \cite{Speicher1998,MS}.

Similar to the scalar-valued case, Cauchy transforms play an important role in the regular theory of free independence with amalgamation, but they need to be generalized in the following way: Let $(\cA,E,\cB)$ be an operator-valued $C^\ast$-probability space. We call
$$\bH^+(\cB) := \{B\in\cB|\ \exists \varepsilon>0:\ \Im(B) \geq \varepsilon 1\}$$
the upper half-plane of $\cB$, where we use the notation $\Im(B) := \frac{1}{2i}(B-B^\ast)$. The \textbf{$\cB$-valued Cauchy transform $G_X$}\index{Cauchy transform!operator-valued} of any $X=X^\ast\in\cA$ is the Fr\'echet holomorphic function defined by
$$G_X(B) := E[(B-X)^{-1}] \qquad\text{for all $B\in\bH^+(\cB)$.}$$
In fact, it induces a map $G_X: \bH^+(\cB) \to \bH^-(\cB)$ from $\bH^+(\cB)$ to the lower half-plane $\bH^-(\cB)$ defined by
$$\bH^-(\cB) := \{B\in\cB|\ \exists \varepsilon>0:\ -\Im(B) \geq \varepsilon 1\}.$$

The most convenient way to deal with free additive convolution, both in the scalar- and in the operator-valued setting, is to use subordination, since it is easily accessible for numerical computations. Before we  give the precise statement, we introduce the following regular transforms, which are both related to Cauchy transforms, namely
\begin{itemize}
\item the reciprocal Cauchy transform $F_X: \bH^+(\cB) \to \bH^+(\cB)$ by $$F_X(B)=E\left[(B-X)^{-1}\right]^{-1}=G_X(B)^{-1},$$
\item and the h transform $h_X: \bH^+(\cB) \to \overline{\bH^+(\cB)}$ by $$h_X(B)=E\left[(B-X)^{-1}\right]^{-1}-B=F_X(B)-B.$$
\end{itemize}
Note, that these mappings are indeed well-defined since it has been shown in \cite{BPV2012} that $
\Im(F_X(B))\geq\Im(B)$ for all $B\in\bH^+(\cB)$, which immediately implies $\Im(h_X(B))\geq0$ for all $B\in\bH^+(\cB)$.

\begin{thm}[\cite{BMS2013}]\label{thm:subordination}
Assume that $(\cA, E,\cB)$ is a $C^*$-operator-valued noncommutative
probability space and $X,Y\in\mathcal \cA$ are two selfadjoint operator-valued random variables
free over $\cB$. Then there exists a unique pair of Fr\'echet (and thus also G\^{a}teaux) regular maps $\omega_1,\omega_2: \bH^+(\cB)\to\bH^+(\cB)$ so that
\begin{enumerate}
\item $\Im(\omega_j(B))\geq\Im(B)$ for all $B\in\bH^+(\cB)$ and $j\in\{1,2\}$,
\item $F_X(\omega_1(B))+B=F_Y(\omega_2(B))+B=\omega_1(B)+\omega_2(B)$ for all $B\in\bH^+(\cB)$,
\item $G_X(\omega_1(B))=G_Y(\omega_2(B))=G_{X+Y}(B)$ for all $B\in\bH^+(\cB).$
\end{enumerate}
Moreover, if $B\in\bH^+(\cB)$, then $\omega_1(B)$ is the unique fixed point of the map
$$
f_B:\ \bH^+(\cB)\to\bH^+(\cB),\quad f_B(W)=h_Y(h_X(W)+B)+B,
$$
and $\omega_1(B)=\lim_{n\to\infty}f_B^{\circ n}(W)$ for any $W\in\bH^+(\cB)$, where $f_B^{\circ n}$ means the $n$-fold composition of $f_B$ with itself. Same statements hold for $\omega_2$, with $f_B$ replaced by $W\mapsto h_X(h_Y(W)+B)+B$.
\end{thm}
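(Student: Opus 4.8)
The plan is to realize $\omega_1$ as the unique attracting fixed point of the map $f_B$, to read off $\omega_2$ from property $(2)$, and only at the very end to use freeness in order to get property $(3)$. First I would collect the analytic facts about the $h$-transform. As already noted, \cite{BPV2012} yields $\Im(F_Z(B)) \geq \Im(B)$, hence $\Im(h_Z(B)) \geq 0$, for every selfadjoint $Z \in \cA$ and every $B \in \bH^+(\cB)$; thus $h_Z$ maps $\bH^+(\cB)$ into its closure. The one extra quantitative input needed is that, for each $\varepsilon > 0$, the map $h_Z$ is \emph{bounded} on $\bH^+_\varepsilon(\cB) := \{B \in \cB : \Im(B) \geq \varepsilon 1\}$; this follows from the operator-valued Nevanlinna representation of $F_Z$ together with $\|Z\| < \infty$ and the estimate $\|(B-Z)^{-1}\| \leq \varepsilon^{-1}$ valid on $\bH^+_\varepsilon(\cB)$, and it is precisely the estimate that feeds the Earle--Hamilton fixed point theorem below.

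Next I would run the fixed point construction. Fix $B \in \bH^+(\cB)$, say $\Im(B) \geq \varepsilon 1$. Since $\Im(h_Y(\cdot)) \geq 0$, the map $f_B$ sends $\bH^+(\cB)$ into $\bH^+_\varepsilon(\cB)$; and since $h_X(W) + B$ always lies in $\bH^+_\varepsilon(\cB)$ for $W \in \bH^+(\cB)$, boundedness of $h_Y$ on $\bH^+_\varepsilon(\cB)$ gives $\|f_B(W)\| \leq M + \|B\|$ for all $W \in \bH^+(\cB)$, with $M$ depending only on $Y$ and $\varepsilon$. Hence $f_B(\bH^+(\cB))$ is contained in a bounded set $K_B$ whose closure lies inside $\bH^+(\cB)$, in fact at distance at least $\varepsilon$ from its boundary. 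Choosing a bounded connected open $D$ with $K_B \subseteq D$ and $\overline{D} \subseteq \bH^+(\cB)$, the map $f_B \colon D \to D$ is holomorphic and carries $D$ into $K_B$, which lies at positive distance from $\partial D$, so the Earle--Hamilton theorem provides a unique fixed point $\omega_1(B) \in D$ with $f_B^{\circ n}(W) \to \omega_1(B)$ for all $W \in D$; since $f_B(W) \in K_B \subseteq D$ for every $W \in \bH^+(\cB)$, the convergence and the uniqueness of the fixed point extend to all of $\bH^+(\cB)$. Holomorphy of $B \mapsto \omega_1(B)$ then follows because near any $B_0$ one may take $D$ and the Earle--Hamilton contraction rate uniform in $B$, exhibiting $\omega_1$ as a locally uniform limit of the holomorphic maps $B \mapsto f_B^{\circ n}(W_0)$.

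Setting $\omega_2(B) := h_X(\omega_1(B)) + B$, properties $(1)$ and $(2)$ are a direct unwinding: the fixed point identity $\omega_1(B) = h_Y(\omega_2(B)) + B$ and the definition of $\omega_2$ give, using $h_Z = F_Z - \id$, that $F_X(\omega_1(B)) + B = F_Y(\omega_2(B)) + B = \omega_1(B) + \omega_2(B)$, while $\Im(\omega_1(B)) = \Im(h_Y(\omega_2(B))) + \Im(B) \geq \Im(B)$ and likewise for $\omega_2$. Conversely, any pair satisfying $(1)$ and $(2)$ has $\omega_1(B)$ a fixed point of $f_B$, hence unique, with $\omega_2$ determined by $(2)$, which gives uniqueness of the pair; interchanging the roles of $X$ and $Y$ gives the analogous fixed point description of $\omega_2$ via $W \mapsto h_X(h_Y(W)+B)+B$. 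Finally, for property $(3)$ I would invoke the operator-valued subordination relation for $X,Y$ free over $\cB$ (Voiculescu): there exist Fr\'echet analytic $\widetilde\omega_1, \widetilde\omega_2 \colon \bH^+(\cB) \to \bH^+(\cB)$ with $G_{X+Y}(B) = G_X(\widetilde\omega_1(B)) = G_Y(\widetilde\omega_2(B))$ and $\widetilde\omega_1(B) + \widetilde\omega_2(B) = B + F_{X+Y}(B)$; since then $F_{X+Y} = G_{X+Y}^{-1} = F_X \circ \widetilde\omega_1 = F_Y \circ \widetilde\omega_2$, these $\widetilde\omega_j$ satisfy $(1)$ and $(2)$, so by the uniqueness just proved $\widetilde\omega_j = \omega_j$, and property $(3)$ holds for the functions we constructed.

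The step I expect to be the main obstacle is the quantitative lemma on $h$-transforms (boundedness of $h_Y$ on $\bH^+_\varepsilon(\cB)$) together with the parameter-uniform application of Earle--Hamilton on a bounded subdomain, which is what makes both the fixed point and its analytic dependence on $B$ work; if one wanted a fully self-contained treatment, reproving the operator-valued subordination relation --- the only place where freeness is actually used --- via the operator-valued $R$-transform would be the other substantial piece.
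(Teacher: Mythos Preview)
The paper does not prove Theorem \ref{thm:subordination}; it is quoted from \cite{BMS2013} and used as a black box, so there is no ``paper's own proof'' to compare against. Your sketch is in fact close to the argument actually given in \cite{BMS2013}: the construction of $\omega_1$ as the attracting fixed point of $f_B$ via an Earle--Hamilton/Denjoy--Wolff type theorem, with the key input being that $f_B$ carries $\bH^+(\cB)$ into a bounded set whose imaginary part is uniformly $\geq \Im(B)$, is exactly how that reference proceeds.

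One point to flag: your derivation of property (3) invokes ``the operator-valued subordination relation (Voiculescu)'' as a pre-existing fact and then identifies the subordination functions with your $\omega_j$ by uniqueness. This is logically fine provided you are willing to import that result, but it makes the theorem largely a repackaging of something already known. The actual contribution of \cite{BMS2013} is to establish (3) \emph{directly} from the fixed point construction together with freeness (via an operator-valued moment/cumulant or coalgebraic computation), so that the whole statement, including the iterative scheme, is self-contained and numerically implementable. If you want your write-up to stand on its own you would need to supply that step rather than cite it; your closing remark about reproving subordination via the operator-valued $R$-transform identifies exactly this gap.
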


\begin{rem}\label{rem:Cauchy_matval}
Below, we will apply Theorem \ref{thm:subordination} mostly in situations like in Remark \ref{freeness_matrices}. Thus, it becomes necessary to compute the matrix-valued Cauchy transform $G_{\hat{X}}$ of an operator $\hat{X} = \Lambda \otimes X$ for some selfadjoint matrix $\Lambda\in M_N(\CC)$ and a selfadjoint operator $X \in \cC$ with known analytic distribution $\mu_X$, where the latter is typically given in terms of its scalar-valued Cauchy transform $G_X$.
Since the operator-valued distribution of $\hat{X}$ is fully determined by $\mu_X$, this is indeed a well-posed problem, making this dependency explicit and efficiently accessible to numerical computations is however an intricate problem.
The first attempt following relation
$$G_{\Lambda \otimes X}(B) = \int_{\RR} (B - t \Lambda)^{-1}\, d\mu_X(t)$$
with the matrix-valued integral understood in the Bochner sense, from which we can deduce with the help of Stieltjes inversion formula (see Remark \ref{rem:Stieltjes}) that
\begin{equation}\label{eq:Cauchy-transform_opval_integration}
G_{\Lambda \otimes X}(B) = \lim_{\varepsilon\searrow 0} \frac{-1}{\pi} \int_{\RR} (B - t \Lambda)^{-1} \Im(G_X(t+i\varepsilon))\, dt.
\end{equation}
From a computational point of view, however, formula \eqref{eq:Cauchy-transform_opval_integration} is not really satisfying, since it requires computing an integral with unbounded support.
Using some small amount of linear algebra, we may provide a significantly simplified approach. Roughly, this is done in the following steps:
\begin{itemize}
 \item Diagonalize $\Lambda$ by some unitary matrix $U$ such that $$U^\ast \Lambda U = \begin{pmatrix} \Lambda_0 & 0\\ 0 & 0\end{pmatrix},$$ where $\Lambda_0$ is a diagonal matrix with diagonal entries $\lambda_1,\dots,\lambda_d$, being the non-zero eigenvalues of $\Lambda$ listed according to their multiplicities.
 \item Consider the block decomposition $$U^\ast B U = \begin{pmatrix} B_{1,1} & B_{1,2}\\ B_{2,1} & B_{2,2}\end{pmatrix}$$ with $B_{1,1}$ belonging to $M_d(\CC)$ and with all other blocks of appropriate size. Since $B_{2,2}$ is invertible due to $\Im(B_{2,2}) > 0$, we may introduce $B_0 := B_{1,1}-B_{1,2}B_{2,2}^{-1}B_{2,1}$, and the Schur complement formula tells us $$G_{\Lambda \otimes X}(B) = U \begin{pmatrix} I & 0\\ -B_{2,2}^{-1}B_{2,1} & I\end{pmatrix} \begin{pmatrix} G_{\Lambda_0 \otimes X}(B_0) & 0\\ 0 & B_{2,2}^{-1}\end{pmatrix} \begin{pmatrix} I & -B_{1,2}B_{2,2}^{-1}\\ 0 & I\end{pmatrix} U^\ast.$$
 \item In order to compute $G_{\Lambda_0 \otimes X}(B_0)$, we proceed as follows: notice that $G_{\Lambda_0 \otimes X}(B_0) = \widehat{G_{X}}(\Lambda_0^{-1} B_0) \Lambda_0^{-1}$ holds, where $\widehat{G_X}$ denotes the fully matricial extension of the scalar-valued Cauchy transform $G_X$ of $X$ to the noncommutative set $\Omega(X) := \coprod_{n=1}^\infty \Omega_n(X)$ with $$\Omega_n(X) := \{A\in M_n(\CC)|\ \text{$A \otimes 1_\cA - I_n \otimes X$ is invertible in $M_n(\CC) \otimes \cA$}\},$$ which is given by the holomorphic functional calculus.
\end{itemize} 
\end{rem}

Remark \ref{freeness_matrices} and Theorem \ref{thm:subordination} constitute the main ingredients from operator-valued free probability for solving the questions formulated in Section \ref{subsec:polynomials}; this will done in Section \ref{subsec:main_problems}.

\subsection{Brown measures}
\label{subsec:Brown}

If a noncommutative random variable $X$ living in some $C^\ast$-probability space $(\cA,\phi)$ fails to be selfadjoint, one needs to work with its $\ast$-distribution in order to capture its spectral properties properly. However, we typically loose then the analytic description in terms of probability measures. The only exception are normal operators, but also in such cases, the machinery of Cauchy transforms that we use for selfadjoint operators is sufficient anymore.

An appropriate substitute for the notion of analytic distributions, when going beyond the case of selfadjoint operators, is the so-called Brown measure, which was introduced in \cite{Brown1986} and revived in \cite{HL}. As we shall see below, they enjoy the important feature that they are accessible by means of operator-valued free probability theory.

In order for this theory to work we need to stay within in the setting of finite von Neumann algebras. The main point is that we need our state $\phi$ to be a trace.
Hence we will in the following discussions around the Brown measure always assume that our noncommutative probability space $(\cA,\phi)$ is actually a \df{tracial $W^*$-probability space}, which means that $\cA$ is a von Neumann algebra and $\phi$ is a faithful, normal trace.
This is however not an actual restriction of generality, because many concrete situations appearing in free probability theory are embedded in a tracial $W^*$-probability setting anyway.
Furthermore, since the presence of a faithful trace guarantees that we are in a stably finite situation, this goes very well also with our observation that we need stably finite algebras as domains of our rational functions in order to ensure good evaluation properties.

Given an arbitrary element $X$ in any tracial $W^\ast$-probability space $(\cA,\phi)$, we may define its \df{Fuglede-Kadison determinant} $\Delta(X)$ via the equation
\begin{equation}\label{eq:FKdet}
\log(\Delta(X)) = \lim_{\varepsilon \searrow 0} \frac{1}{2}\phi(\log(X X^\ast+\varepsilon^2)).
\end{equation}
It was shown in \cite{Brown1986}, that the function $z\mapsto \frac{1}{2\pi}\log(\Delta(X-z{\bf 1}))$ is subharmonic on $\CC$ and harmonic outside the spectrum of $X$. Thus, the Riesz Decomposition Theorem (cf. \cite[Theorem 3.7.9]{Ransford1995}) shows that there exists a Radon measure $\mu_X$ on $\CC$ such that
$$\int_\CC \psi(z)\, d\mu_X(z) = \frac{1}{2\pi} \int_\CC \Big(\frac{\partial^2 \psi}{\partial x^2}(z) + \frac{\partial^2\psi}{\partial y^2}(z)\Big)\log(\Delta(X-z{\bf 1}))\, d\lambda^2(z)$$
holds for all functions $\psi\in C^\infty_c(\CC)$. Here, we denote by $\lambda^2$ the Lebesgue measure on $\CC$, which is induced under the usual identification of $\CC$ with $\RR^2$. In other words, the \df{Brown measure} $\mu_X$ is the \df{generalized Laplacian} of $z\mapsto \frac{1}{2\pi}\log(\Delta(X-z{\bf 1}))$, which means that $\mu_X$ is determined (in the distributional sense) by
\begin{equation}\label{eq:BrownDef}
\mu_X = \frac{2}{\pi} \frac{\partial}{\partial z} \frac{\partial}{\partial\overline{z}} \log(\Delta(X-z)).
\end{equation}
Note that we made use of the fact that, on $C^2$-functions, the usual Laplacian $\frac{\partial^2}{\partial x^2} + \frac{\partial^2}{\partial y^2}$ can be rewritten as
$$\frac{\partial^2}{\partial x^2} + \frac{\partial^2}{\partial y^2} = 4 \frac{\partial}{\partial z} \frac{\partial}{\partial\overline{z}}$$
in terms of the Pompeiu-Wirtinger derivatives
$$\frac{\partial}{\partial z} = \frac{1}{2}\Big(\frac{\partial}{\partial x} - i\frac{\partial}{\partial y}\Big) \qquad\text{and}\qquad \frac{\partial}{\partial\overline{z}} = \frac{1}{2}\Big(\frac{\partial}{\partial x} + i\frac{\partial}{\partial y}\Big).$$

In order to compute the Brown measure $\mu_X$ of $X$, it is convenient to approximate $\mu_X$ by certain regularizations $\mu_{X,\varepsilon}$. The \textbf{regularized Brown measures $\mu_{X,\varepsilon}$}\index{Brown measure!regularized} are obtained by replacing in the defining equation \eqref{eq:BrownDef} of $\mu_X$ the Fuglede-Kadison determinant $\Delta$ by a certain regularization $\Delta_\varepsilon$. In analogy to \eqref{eq:FKdet}, the \textbf{regularized Fuglede-Kadison determinant $\Delta_\varepsilon$}\index{Fuglede-Kadison determinant!regularized} is determined by
\begin{equation}
\log(\Delta_\varepsilon(X)) = \frac{1}{2}\phi(\log(X X^\ast+\varepsilon^2)).
\end{equation}
Explicitly and again in the distributional sense, this means that
\begin{equation}\label{eq:regBrownDef}
\mu_{X,\varepsilon}(z) = \frac{2}{\pi} \frac{\partial}{\partial z} \frac{\partial}{\partial\overline{z}} \log(\Delta_\varepsilon(X-z)).
\end{equation}
If we consider the \textbf{regularized Cauchy transform}\index{Cauchy transform!regularized}
$$G_{X,\varepsilon}(z) = \phi\big((z-X)^\ast\big((z-X)(z-X)^\ast + \varepsilon^2\big)^{-1}\big),$$
which is a $C^\infty$-function on $\CC$ (but obviously not holomorphic), then we may rewrite \eqref{eq:regBrownDef} as
\begin{equation}\label{eq:BrownCauchy}
d\mu_{X,\varepsilon}(z) = \frac{1}{\pi} \frac{\partial}{\partial\overline{z}} G_{X,\varepsilon}(z)\, d\lambda^2(z).
\end{equation}
The latter can be seen as an analogue of Stieltjes inversion formula \eqref{eq:Stieltjes}.

Since free probability theory -- both in the scalar- and operator-valued setting -- provides powerful tools to deal with Cauchy transforms, the formula for $\mu_{X,\varepsilon}$ given in \eqref{eq:BrownCauchy} looks rather appealing. However, there is the disadvantage that $G_{X,\varepsilon}$ is fairly close to a usual Cauchy transform but still a totally different object.

Fortunately, we can calculate $G_{X,\varepsilon}$ by using the so-called hermitian reduction method, which goes back to \cite{JNPZ97} and was taken up in \cite{BSS2015}.
This method is based on the $M_2(\CC)$-valued $C^\ast$-probability space $(M_2(\cA),E,M_2(\CC))$, where $E$ denotes the conditional expectation that is induced by $\id_{M_2(\CC)} \otimes \phi$ under the identification $M_2(\cA) \cong M_2(\CC) \otimes \cA$. We consider the selfadjoint element
$$\bX := \begin{pmatrix} 0 & X\\ X^\ast & 0\end{pmatrix} \in M_2(\cA).$$
The regularized Cauchy transform $G_{X,\varepsilon}(z)$ can then be obtained as the $(2,1)$-entry of the $M_2(\CC)$-valued Cauchy transform of $\bX$, if it is evaluated at the point
$$\Lambda_\varepsilon(z) := \begin{pmatrix} i\varepsilon & z\\ \overline{z} & i\varepsilon\end{pmatrix} \in \bH^+(M_2(\CC)).$$
More precisely, we have for each $z\in\CC^+$ that
\begin{equation}\label{eq:hermCauchy}
G_{X,\varepsilon}(z) = [G_{\bX}(\Lambda_\varepsilon(z))]_{2,1}.
\end{equation}

Collecting our observations, we see that we can compute the regularized Brown measures $\mu_{X,\varepsilon}$ with the help of \eqref{eq:BrownCauchy} from its regularized Cauchy transforms $G_{X,\varepsilon}$, whereas the regularized Cauchy transform $G_{X,\varepsilon}$ itself can be deduced by \eqref{eq:hermCauchy} from the $M_2(\CC)$-valued Cauchy transform of the selfadjoint element $\bX$. So, in this way, also Brown measures become accessible via Cauchy transforms.

\subsection{Cauchy Transforms of Matrices of NC Rational Expressions}

In this section, we finally bring together our ``linearization'' techniques presented in Sections \ref{sec:Realization} and developed further in Sections \ref{sec:Algorithm} and \ref{sec:evaluations} with tools from (operator-valued) free probability theory in order to settle the problem discussed in Section \ref{subsec:polynomials}.
Our approach follows ideas of both \cite{BMS2013} and \cite{BSS2015}, but while they could only treat the case of polynomials, we can deal more generally with rational expressions, since formal linear representations, and the powerful machinery of descriptor realizations in particular, provide a vast generalization of the linearization techniques from \cite{And}.
We will work here even with matrices of rational expressions, and in the case where they are regular at $0$, we can take advantage of the fact that we can pass to minimal descriptor realizations.

\subsubsection{Minimal Descriptor Realizations of Matrices of Rational Expressions}

The first step is the following likely known lemma in which we combine earlier results on the existence and uniqueness of minimal descriptor realizations of matrices of rational functions. For reasons of clarity, we first restrict ourselves to the case of (symmetric) real matrices, but in the subsequent Remark \ref{rem:real_to_complex}, we will extend it to the complex case.

\begin{lem}
\label{lem:descr}
Let $r$ be a symmetric $k \times k$ matrix of rational expressions 
regular at 0 in symmetric variables $x=(x_1,\dots,x_g)$. 
Then $r$ has a minimal (i.e., controllable and observable) symmetric descriptor realization
\begin{equation}
  \label{eq:descr}
   \br(x) = \Delta +  \Xi^\transpose ( M_0 - L_M(x) )^{-1}  \Xi, \qquad L_M(x) = M_1 x_1 + \ldots + M_g x_g,
\end{equation}
with a symmetric $k \times k$ matrix $\Delta$, symmetric $N\times N$ matrices $M_j$, $j=0,\dots,g$ with $M_0^2 = I_N$, 
and a $N\times k$ matrix $\Xi$.

The representation \eqref{eq:descr} is unique in the sense that another such representation
 ${\widetilde \Delta}, {\widetilde \Xi},  {\widetilde M}_j,\  j=0,\dots,n,$ for the same $r$ and with $ \widetilde \Delta = \Delta$ satisfies
\begin{equation}
SM_0 M_j = \tM_0 \widetilde{M}_j S  \qquad      
SM_0  \Xi = \tM_0 \widetilde{\Xi}   \qquad
\Xi^\transpose = \widetilde{\Xi}^\transpose S
\end{equation}
for some invertible matrix $S$ satisfying $S^\transpose M_0 S = M_0$.
\end{lem}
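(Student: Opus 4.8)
The plan is to assemble Lemma~\ref{lem:descr} from results already in hand, treating existence and uniqueness separately. For existence, I would invoke Theorem~\ref{thm:realizations_matrices_of_rational_expressions}(ii) applied to the symmetric matrix $r$ of regular rational expressions over $\bbR$: this directly produces a symmetric descriptor realization $\br(x) = \Delta + \Xi^\transpose(M_0 - L_M(x))^{-1}\Xi$ with $\Delta$ prescribable (in particular we may take $\Delta$ to be the feed-through term $r(0)$, or any symmetric matrix), with symmetric $M_j$, and with $M_0$ a signature matrix, i.e.\ $M_0^\transpose = M_0$ and $M_0^2 = I_N$. This realization need not be minimal, so the next step is to cut it down: by the discussion in Section~\ref{sec:cutdown} together with the symmetrization remark following Proposition~\ref{prop:uniquedescriptor} (the analogue of Lemma~4.2 of \cite{HMV06}), one can pass to a minimal realization that is still symmetric, still monic-up-to-signature (i.e.\ still has $M_0^2 = I$), and has the same feed-through term $\Delta$. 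This gives the asserted form \eqref{eq:descr} with all stated symmetry and size constraints, and $\Xi$ of size $N \times k$.

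For uniqueness, suppose $\widetilde\Delta, \widetilde\Xi, \widetilde M_j$ is another minimal symmetric descriptor realization of the same $r$ with $\widetilde\Delta = \Delta$. Here I would apply Proposition~\ref{prop:uniquedescriptor} directly: with $C = \Xi^\transpose$, $C^\transpose = \Xi$, $J = M_0$, $A_j = M_j$ (and analogously with tildes), the proposition asserts the existence of an invertible similarity transform $S$ between the two minimal symmetric descriptor realizations satisfying $S^\transpose \widetilde M_0 S = M_0$, together with the relations $S M_0 M_j = \widetilde M_0 \widetilde M_j S$, $S M_0 \Xi = \widetilde M_0 \widetilde\Xi$, and $\Xi^\transpose = \widetilde\Xi^\transpose S$. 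These are exactly the three displayed identities in the statement, so uniqueness follows immediately once the hypotheses of Proposition~\ref{prop:uniquedescriptor} are verified.

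The main obstacle — and the only point requiring care — is bridging the gap between the two cited results, which are phrased for slightly different objects. Proposition~\ref{prop:uniquedescriptor} and Lemma~\ref{lem:symR} as stated in \cite{HMV06} concern \emph{matrices of regular rational functions} (or, in the version reproved in the excerpt, matrix rational expressions), whereas here $r$ is literally a matrix of rational \emph{expressions}. The resolution is the observation recorded in Section~\ref{subsubsec:matrix_rational_functions} (via Appendix~A.4 of \cite{HMV06}) that a regular matrix-valued NC rational function is the same as a matrix of regular NC rational functions, so that $r$, as a matrix of regular rational expressions, determines a regular matrix-valued rational function to which the uniqueness machinery applies verbatim; and the symmetry of $r$ in the sense of Section~\ref{subsubsec:symmetric_mat-val_rational_expressions} translates to symmetry of the associated rational function, so Proposition~\ref{prop:uniquedescriptor} is applicable. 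I would also note that the normalization $M_0^2 = I_N$ (rather than merely $M_0$ invertible) is preserved throughout: Theorem~\ref{thm:realizations_matrices_of_rational_expressions}(ii) delivers it, and the symmetrizing cut-down of Lemma~4.2 of \cite{HMV06} does not disturb it. With these identifications in place, the lemma is a direct corollary of the two earlier results, and no new computation beyond what is already in the excerpt is needed.
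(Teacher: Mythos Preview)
Your proposal is correct and follows essentially the same route as the paper's own proof: existence via Theorem~\ref{thm:realizations_matrices_of_rational_expressions}(ii) followed by the cut-down/symmetrization of Section~\ref{sec:cutdown} (the paper additionally notes that the scalar case is already covered by Lemma~\ref{lem:symR}\eqref{lem:symR2}), and uniqueness as a direct restatement of Proposition~\ref{prop:uniquedescriptor}. One minor point: Proposition~\ref{prop:uniquedescriptor} gives $S^\transpose \widetilde M_0 S = M_0$ rather than the $S^\transpose M_0 S = M_0$ appearing in the lemma statement, so the discrepancy you noticed is a typo in the statement, not a gap in your argument.
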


\begin{proof}
For scalar rational expressions $r$, existence of this minimal realization rephrases Item \ref{lem:symR2} of Lemma \ref{lem:symR}; for general $r$, this follows from Theorem \ref{thm:realizations_matrices_of_rational_expressions} with the help of cutting down as explained in Section \ref{sec:cutdown}. Uniqueness rephrases Proposition \ref{prop:uniquedescriptor}.
\end{proof}

\begin{rem}
\label{rem:real_to_complex}
The rational expression $r$ can have complex coefficients and if it is selfadjoint, then \eqref{eq:descr} holds with matrices $\Xi$ and selfadjoint matrices $\Delta$ and $M_j$, $j=1,\dots,n$, having possibly complex entries. Here $\Xi^\transpose$, $S^\transpose$ become conjugate transpose $\Xi^\ast$, $S^\ast$. This follows from the identification of complex numbers $z= a + i b$ with matrices in $M_2(\RR)$ of the form
 $$ \eta_z := \bmat a & b \\
              -b & a
              \emat
 $$
Note $\bar z = \eta_z^\transpose$.
\end{rem}

\subsubsection{Representation of Cauchy transforms}

In this section, we want to formulate our main result on the representation of Cauchy transforms. For that purpose, it is convenient to introduce the following terminology.

\begin{definition}\label{def:realized_by}~

\begin{enumerate}

 \item\label{it:realized_by-1} A \textbf{generalized descriptor realization (of size $k_1\times k_2$)}\index{generalized descriptor realization} is a $k_1\times k_2$ matrix-valued rational expression
$$\br(x) = \Delta + \Xi_1 \Lambda(x)^{-1} \Xi_2$$
with matrices $\Delta \in M_{k_1 \times k_2}(\CC)$, $\Xi_1 \in M_{k_1 \times N}(\CC)$, and $\Xi_2 \in M_{N \times k_2}(\CC)$, and a linear pencil $\Lambda(x)$ of size $N\times N$.

In the case that $k_1 = k_2$, a generalized descriptor realization is called \textbf{selfadjoint}\index{generalized descriptor realization!selfadjoint} if $\Delta=\Delta^\ast$, $\Xi_1 = \Xi_2^\ast$, and $\Lambda^\ast = \Lambda$ holds.

 \item\label{it:realized_by-2} Let $r$ be a selfadjoint $k \times k$ matrix of rational expressions in the variables $x_1,\dots,x_g$ and let $X_1,\dots,X_g$ be selfadjoint elements in some $C^\ast$-probability space $(\cA,\phi)$, such that $X = (X_1,\dots,X_g) \in \dom_\cA(r)$ is satisfied.
Assume that
$$\br(x) = \Delta + \Xi^\ast \Lambda(x)^{-1} \Xi$$
is a selfadjoint generalized descriptor realization of size $k \times k$, such that the operator $\Lambda(X) \in M_N(\CC) \otimes \cA$ is invertible (i.e., that $X\in\dom_\cA(\br)$) and
$$r(X) = \br(X) = \Delta + \Xi^\ast \Lambda(X)^{-1} \Xi$$
holds true. Then we say that \textbf{$r$ is realized at $X$ by $\br$ (in dimension $N$)}\index{generalized descriptor realization!realized by}.

\end{enumerate}
\end{definition}

Generalized descriptor realizations can be introduced also in the real case. Definition \ref{def:realized_by} \eqref{it:realized_by-1} indeed generalizes the notion of descriptor realizations since they do not require the invertibility of the scalar matrix $\Lambda(0)$. Note that Definition \ref{def:realized_by} \eqref{it:realized_by-2} allows $\br$ to depend on $X$. 

The first part of our main statement is the following theorem, which explains how the Cauchy transform of $r(X)$ can be computed if $r$ is realized at the point $X$ by some $\br$.

Recall from Remark \ref{freeness_matrices} that any $C^\ast$-probability space $(\cA,\phi)$ gives rise to an operator-valued $C^\ast$-probability space $(M_N(\CC) \otimes \cA,E_N,M_N(\CC))$ for each $N\in\NN$, where $E_N$ denotes the conditional expectation given by $E_N := \id_{M_N(\CC)} \otimes \phi$.

\begin{thm}\label{thm:mainrep1}
Let $r$ be a selfadjoint $k \times k$ matrix of rational expressions in the variables $x_1,\dots,x_g$ and let $X_1,\dots,X_g$ be selfadjoint elements in some $C^\ast$-probability space $(\cA,\phi)$, such that $X = (X_1,\dots,X_g) \in \dom_\cA(r)$.
Assume that $r$ is realized at $X$ in dimension $N$ by some selfadjoint generalized descriptor realization
$$\br(x) = \Delta + \Xi^\ast \Lambda(x)^{-1} \Xi$$
in the sense of Definition \ref{def:realized_by}. Consider the linear pencil given by
\begin{equation}\label{eq:shifted_linearization}
\hpen(x) := \begin{pmatrix} \Delta & \Xi^\ast\\ \Xi & -\Lambda(x)\end{pmatrix}.
\end{equation}
Then, the following statements hold true:

\begin{itemize}
  
  \item[(i)] For all $B\in\bH^+(M_k(\CC))$, we have that
\begin{equation}
\label{eq:repRes}
\big(B \otimes 1_\cA - r(X)\big)^{-1}
 = \kar
\left(\begin{pmatrix} B & 0\\ 0 & 0\end{pmatrix} \ot \IA
 - \hpen(X)\right)^{-1} 
 \kac.
\end{equation}

 \item[(ii)] The $M_k(\CC)$-valued Cauchy transform $G_{r(X_1,\dots,X_g)}$ (calculated with respect to the conditional expectation $E_k$) is determined by the $M_{N+k}(\CC)$-valued Cauchy transform $G_{\hpen(X_1,\dots,X_g)}$ (calculated with respect to the conditional expectation $E_{N+k}$) by
\begin{equation}
\label{eq:repG}
\begin{aligned}
G_{r(X)} (B) = \lim_{\varepsilon\searrow 0}
  \bmat I_k & 0 \emat
G_{\hpen(X)} \left( \begin{pmatrix} B & 0\\ 0 & i \varepsilon I_N\end{pmatrix} \right)
\bmat I_k \\ 0 \emat.
\end{aligned}
\end{equation}
for all $B\in\bH^+(M_k(\CC))$.

\end{itemize}

\end{thm}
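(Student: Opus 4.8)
The plan is to prove (i) by a direct Schur-complement computation on the pencil $\hpen(X)$, and then to deduce (ii) from (i) by applying the conditional expectation $E_{N+k}$ and taking $\varepsilon \searrow 0$. The key observation is that, because $r$ is realized at $X$ by $\br$, we have $r(X) = \Delta + \Xi^\ast \Lambda(X)^{-1}\Xi$ with $\Lambda(X)$ invertible in $M_N(\CC)\otimes\cA$; this is precisely the situation in which the Schur complement formula \eqref{eq:Schur} can be applied to the block matrix $\begin{pmatrix} B & 0 \\ 0 & 0\end{pmatrix}\otimes\IA - \hpen(X)$, using the invertible lower-right corner $-\Lambda(X)$ (note $\Im$ considerations are not needed here since $\Lambda(X)$ is already known to be invertible by hypothesis).

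\textbf{Step 1 (Proof of (i)).} First I would write out
$$\begin{pmatrix} B & 0\\ 0 & 0\end{pmatrix}\ot\IA - \hpen(X) = \begin{pmatrix} B\ot\IA - \Delta\ot\IA & -\Xi^\ast\ot\IA\\ -\Xi\ot\IA & \Lambda(X)\end{pmatrix},$$
whose lower-right block $\Lambda(X)$ is invertible. By the Schur complement formula \eqref{eq:Schur} applied with $D = \Lambda(X)$, this matrix is invertible iff its Schur complement
$$\big(B\ot\IA - \Delta\ot\IA\big) - (-\Xi^\ast\ot\IA)\Lambda(X)^{-1}(-\Xi\ot\IA) = B\ot\IA - (\Delta\ot\IA + (\Xi^\ast\ot\IA)\Lambda(X)^{-1}(\Xi\ot\IA)) = B\ot\IA - \br(X)$$
is invertible, and in that case the second line of \eqref{eq:Schur} gives
$$\left(\begin{pmatrix} B & 0\\ 0 & 0\end{pmatrix}\ot\IA - \hpen(X)\right)^{-1} = \begin{pmatrix} 0 & 0\\ 0 & \Lambda(X)^{-1}\end{pmatrix} + \begin{pmatrix} I_k\ot\IA\\ \Lambda(X)^{-1}(\Xi\ot\IA)\end{pmatrix}\big(B\ot\IA - \br(X)\big)^{-1}\begin{pmatrix} I_k\ot\IA & (\Xi^\ast\ot\IA)\Lambda(X)^{-1}\end{pmatrix}.$$
Now compressing by $\kar$ on the left and $\kac$ on the right kills the $\begin{pmatrix} 0&0\\0&\Lambda(X)^{-1}\end{pmatrix}$ term and the off-diagonal flanking factors, leaving exactly $\big(B\ot\IA - \br(X)\big)^{-1} = \big(B\ot\IA - r(X)\big)^{-1}$, using $r(X) = \br(X)$. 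This needs $B\ot\IA - r(X)$ invertible; since $B\in\bH^+(M_k(\CC))$ and $r(X) = r(X)^\ast$ is selfadjoint (as $r$ is a selfadjoint matrix of rational expressions and $X$ is selfadjoint), $\Im(B\ot\IA - r(X)) = \Im(B)\ot\IA \geq \varepsilon\ot\IA > 0$, so this is automatic, and it then also guarantees $\Im$-positivity of the relevant quantity so the Schur argument is legitimate.

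\textbf{Step 2 (Proof of (ii)).} Apply $E_{N+k} = \id_{M_{N+k}(\CC)}\otimes\phi$ to \eqref{eq:repRes}, but evaluated at $\begin{pmatrix} B & 0\\ 0 & i\varepsilon I_N\end{pmatrix}$ rather than $\begin{pmatrix} B & 0\\ 0 & 0\end{pmatrix}$; since this shifted argument lies in $\bH^+(M_{N+k}(\CC))$, the quantity $\begin{pmatrix} B & 0\\ 0 & i\varepsilon I_N\end{pmatrix}\ot\IA - \hpen(X)$ is invertible (as $\hpen(X)$ is selfadjoint), so $G_{\hpen(X)}$ is defined there, and the right-hand side of \eqref{eq:repRes} perturbed by $i\varepsilon I_N$ in the lower-right converges as $\varepsilon\searrow 0$ to the $\varepsilon = 0$ expression $\big(B\ot\IA - r(X)\big)^{-1}$ by norm-continuity of inversion on the open set of invertibles (the $\varepsilon = 0$ matrix being invertible by Step 1). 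Taking $\phi$ of the $(1,1)$ block, i.e. compressing by $\bmat I_k & 0\emat$ and $\bmat I_k\\ 0\emat$ and applying $E_{N+k}$, which intertwines with these compressions, yields $\lim_{\varepsilon\searrow 0}\bmat I_k & 0\emat G_{\hpen(X)}\big(\begin{pmatrix} B & 0\\ 0 & i\varepsilon I_N\end{pmatrix}\big)\bmat I_k\\ 0\emat = E_k\big[(B\ot\IA - r(X))^{-1}\big] = G_{r(X)}(B)$, which is \eqref{eq:repG}.

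\textbf{Main obstacle.} The only genuinely delicate point is the interchange of the limit $\varepsilon\searrow 0$ with the compression/expectation in Step 2: one must check that $\begin{pmatrix} B & 0\\ 0 & i\varepsilon I_N\end{pmatrix}\ot\IA - \hpen(X)$ remains uniformly invertible (bounded inverse) as $\varepsilon\searrow0$, which follows because its limit at $\varepsilon = 0$ is invertible (Step 1) and inversion is continuous on the norm-open set of invertible elements of the $C^\ast$-algebra $M_{N+k}(\CC)\otimes\cA$; hence the convergence is in operator norm, and $E_{N+k}$ being norm-continuous (indeed completely contractive) passes through the limit. Everything else is bookkeeping with block matrices and the Schur complement formula \eqref{eq:Schur}, together with the defining property of ``realized at $X$'' from Definition \ref{def:realized_by}.
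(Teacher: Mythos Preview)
Your proof is correct and follows essentially the same route as the paper: part (i) is the Schur complement computation on the block matrix $\begin{pmatrix} B\ot\IA - \Delta\ot\IA & -\Xi^\ast\ot\IA\\ -\Xi\ot\IA & \Lambda(X)\end{pmatrix}$ using the invertible lower-right block $\Lambda(X)$, and part (ii) is obtained by applying the conditional expectation, noting the intertwining $E_k[\ra^* W \ra] = \cE_k^* E_{N+k}[W]\cE_k$, and passing to the limit $\varepsilon\searrow 0$ via norm-continuity of inversion at the already-invertible point $\begin{pmatrix} B & 0\\ 0 & 0\end{pmatrix}\ot\IA - \hpen(X)$. Your identification of the ``main obstacle'' and its resolution match the paper's treatment exactly.
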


Before giving the proof, we point out that the linear pencil $\hpen$, as defined in \eqref{eq:shifted_linearization}, can be seen as a shifted version of the \textbf{linearizations}\index{linearization} considered in \cite{And,BMS2013,BSS2015}; see also the example \eqref{eq:phat} that was discussed at the beginning.

\begin{proof}[Proof of Theorem \ref{thm:mainrep1}]
The proof of (i) relies in the Schur complement formula, which was outlined in Section \ref{subsec:Schur}. Let us abbreviate
$$\cE_k = \bmat I_k \\ 0 \emat \qquad \text{and} \qquad  \rho_\cA = \bmat I_k \otimes \IA \\ 0 \emat.$$
Since $r$ is realized at $X$ by $\br$, we know that $\Lambda(X)$ is invertible in $M_N(\CC) \otimes \cA$. Thus, the Schur complement formula tells us, for any $B\in M_k(\CC)$, that
$$\begin{pmatrix} B \otimes 1_\cA -\Delta \otimes 1_\cA & -\Xi^\ast \otimes 1_\cA\\ -\Xi\otimes1_\cA & \Lambda(X)\end{pmatrix}$$
is invertible in $M_{N+k} \otimes \cA$ if any only if its Schur complement
$$B \otimes 1_\cA - \big(\Delta \otimes 1_\cA + (\Xi^\ast \otimes 1_\cA) \Lambda(X)^{-1} (\Xi \otimes 1_\cA) \big) = B \otimes 1_\cA - r(X)$$
is invertible in $M_k(\CC) \otimes \cA$. Hence, since $r(X)$ is selfadjoint and thus $B \otimes 1_\cA - r(X)$ must be invertible at least for all $B\in \bH^+(M_k(\CC))$, we know that
$$\begin{pmatrix} B \otimes 1_\cA - \Delta \otimes 1_\cA & -\Xi^\ast \otimes 1_\cA\\ -\Xi \otimes 1_\cA & \Lambda(X)\end{pmatrix}$$
is invertible for all $B\in \bH^+(M_k(\CC))$. In this case, \eqref{eq:Schur} yields that
\begin{align*}
\rho_\cA^\ast \left(\begin{pmatrix} B & 0\\ 0 & 0\end{pmatrix} \otimes 1_\cA - \hpen(X)\right)^{-1} \rho_\cA
&= \rho_\cA^\ast \begin{pmatrix} B \otimes 1_\cA - \Delta \otimes 1_\cA & -\Xi^\ast \otimes 1_\cA \\ -\Xi \otimes 1_\cA & \Lambda(X)\end{pmatrix}^{-1} \rho_\cA \\
&= \big(B\otimes 1_\cA - r(X)\big)^{-1},
\end{align*}
which is the stated formula \eqref{eq:repRes}.

For seeing \eqref{eq:repG}, we first note that by definition
$$E_k [ \ra^* W \ra ] =  \cE_k^* \;  E_{N+k} [ W ] \;  \cE_k \qquad\text{for any $W \in M_{N+k}(\CC) \otimes \cA$}.$$
Thus, we get by applying $E_k$ to both sides of \eqref{eq:repRes} that
$$ \cE_k^* \; E_{N+k} \
 \left[  \left(\begin{pmatrix} B & 0\\ 0 & 0\end{pmatrix}  \ot \IA
 - \hpen(X)\right)^{-1}  \right] \cE_k
 = E_k \big[ (B \ot \IA  - r(X))^{-1}\big]$$
and hence, by definition of $G_{r(X_1,\dots,X_g)}$, that
\begin{equation}\label{eq:repG_part1}
 \cE_k^* \  E_{N+k}\left[    
 \left(\begin{pmatrix} B & 0\\ 0 & 0\end{pmatrix} \ot \IA 
  - \hpen(X)\right)^{-1}      \right] \cE_k
= G_{r(X)}(B).
\end{equation}

Now, we must pay some attention to the fact that the expression
$$E_{N+k}\left[\left(\begin{pmatrix} B & 0\\ 0 & 0\end{pmatrix}  \ot \IA 
- \hpen(X)\right)^{-1}\right]$$
appearing on the left hand side of this equation is not precisely an evaluation of the $M_{N+k}(\CC)$-valued Cauchy transform of $\hpen(X)$ but rather a boundary value of it, since $\begin{pmatrix} B & 0\\ 0 & 0\end{pmatrix}$ does not belong to the upper half-plane $\bH^+(M_{N+k}(\CC))$. The representation given in \eqref{eq:repG} therefore involves a limit procedure which allows to move our observation by $\begin{pmatrix} B & 0\\ 0 & i\varepsilon I_N\end{pmatrix}$ to the domain of the Cauchy transforms, where all our regular tools apply.

In order to check the validity of the representation given in \eqref{eq:repG}, we just have to observe that $G_{\hpen(X)}$ can be extended in the obvious way by
$$G_{\hpen(X)}(A) = E_{N+k}\big[(A  \ot \IA  -\hpen(X))^{-1}\big]$$
to the open set $\Omega \supset \bH^+(M_{N+k}(\CC))$ of all matrices $A\in M_{N+k}(\CC)$, for which $A \ot \IA    -\hpen(X)$ is invertible in $M_{N+k}(\CC) \otimes \cA$. Since this extension is regular and in particular continuous, and since $\begin{pmatrix} B & 0\\ 0 & 0\end{pmatrix} \in \Omega$, we may deduce that
\begin{equation}\label{eq:repG_part2}
\begin{aligned}
\lim_{\varepsilon\searrow 0} G_{\hpen(X)}  \left( \begin{pmatrix} B & 0\\ 0 & i \varepsilon I_N\end{pmatrix} \right)
 = E_{N+k}\left[\left(\begin{pmatrix} B & 0\\ 0 & 0\end{pmatrix}  \ot \IA - \hpen(X)\right)^{-1}\right].
\end{aligned}
\end{equation}
By the continuity of the map compressing 
$ M_{N+k}(\CC)$ to $M_k(\CC)$, 
a combination of \eqref{eq:repG_part1} and \eqref{eq:repG_part2} 
yields the stated formula \eqref{eq:repG}.
\end{proof}

The next theorem, which is the second part of our main result, tells us how we can find a selfadjoint generalized descriptor realization $\br$, by which $r$ is realized at some given point $X=X^\ast$ in its domain.

\begin{thm}\label{thm:mainrep2}
Let $r$ be a selfadjoint $k \times k$ matrix of rational expressions in the variables $x=(x_1,\dots,x_g)$ and let $X_1,\dots,X_g$ be selfadjoint elements in some $C^\ast$-probability space $(\cA,\phi)$, such that the condition $X=(X_1,\dots,X_g)\in\dom_\cA(r)$ is satisfied. Then the following statements hold true:

\begin{itemize}
 
 \item[(1)] If $\rho=(Q,v)$ is any selfadjoint formal linear representation of $r$ (whose existence is guaranteed by Theorem \ref{thm:FLR_matrices_of_rational_expressions}), then $r$ is realized at $X$ by $$\br(x) = \br_\rho(x) := -v^\ast Q(x)^{-1} v.$$

 \item[(2)] Suppose that $r$ is regular at zero, meaning that all its entries are regular at zero, and suppose in addition that the state $\phi$ on $\cA$ is both faithful and tracial. If
$$\br(x) = \Delta + \Xi^\ast (M_0 - L_M(x))^{-1} \Xi$$
is any selfadjoint descriptor realization of $r$ and if we assume
\begin{itemize}
 \item[(i)] either that the condition $X\in\dom_\cA(\br)$ is satisfied,
 \item[(ii)] or that $\br$ is chosen minimal (such $\br$ exist according to Lemma \ref{lem:descr}),
\end{itemize}
then $r$ is realized at $X$ by $\br$.

\end{itemize}

\end{thm}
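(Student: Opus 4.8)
The plan is to dispatch the two parts of the theorem separately, in each case by reducing to an existence/evaluation result already established earlier in the paper.

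For~(1), I would first recall what it means for $\rho=(Q,v)$ to be a selfadjoint matrix-valued formal linear representation of $r$ in the sense of Definition~\ref{def:matval_rep_sa}: the pencil $Q$ consists of selfadjoint matrices, and for \emph{any} unital complex $\ast$-algebra $\cA$ one has $\dom_\cA^\sa(r)\subseteq\dom_\cA(Q^{-1})$ together with $r(Y)=-v^\ast Q(Y)^{-1}v$ for every $Y\in\dom_\cA^\sa(r)$. A $C^\ast$-algebra is in particular such an algebra, and since $X=(X_1,\dots,X_g)$ consists of selfadjoint elements with $X\in\dom_\cA(r)$, we have $X\in\dom_\cA^\sa(r)$; hence $Q(X)$ is invertible in $M_N(\CC)\otimes\cA$ and $r(X)=-v^\ast Q(X)^{-1}v=\br_\rho(X)$. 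It then remains to observe that $\br_\rho(x)=-v^\ast Q(x)^{-1}v=0+v^\ast\bigl(-Q(x)\bigr)^{-1}v$ is a selfadjoint generalized descriptor realization of size $k\times k$ in the sense of Definition~\ref{def:realized_by}~\eqref{it:realized_by-1}: its feed-through term is $\Delta=0$, we may take $\Xi=v$, and the linear pencil is $\Lambda(x):=-Q(x)$, which is selfadjoint precisely because $Q$ is. Here it is crucial that the notion of generalized descriptor realization, unlike that of a descriptor realization, does not demand invertibility of $\Lambda(0)$. Since we have already checked that $\Lambda(X)=-Q(X)$ is invertible and that $r(X)=\br_\rho(X)$, this is exactly the assertion that $r$ is realized at $X$ by $\br_\rho$ in dimension $N$.

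For~(2), I would first note that, $\phi$ being a faithful trace, the $C^\ast$-algebra $\cA$ is stably finite by Lemma~\ref{lem:trace}, and that $\br(x)=\Delta+\Xi^\ast(M_0-L_M(x))^{-1}\Xi$ is already presented as a selfadjoint generalized descriptor realization, with $\Lambda(x):=M_0-L_M(x)$ selfadjoint. In case~(i) the hypothesis $X\in\dom_\cA(\br)$ gives $X\in\dom_\cA(r)\cap\dom_\cA(\br)$; since $\br$ is by assumption a descriptor realization of the matrix $r$ of regular rational expressions, $\br$ and $r$ are $\bbM(\CC)$-evaluation equivalent, and so Theorem~\ref{thm:sameOnX_generalized}~\eqref{it:sameOnX_descriptor}, applied over the stably finite algebra $\cA$, yields $r(X)=\br(X)$; together with $X\in\dom_\cA(\br)$ this says that $r$ is realized at $X$ by $\br$. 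In case~(ii), with $\br$ chosen minimal (such $\br$ exist by Lemma~\ref{lem:descr}), Theorem~\ref{thm:rep_min}~(ii) applies over the stably finite algebra $\cA$ and gives both $\dom_\cA^\sa(r)\subseteq\dom_\cA(\br)$ and $r(Y)=\br(Y)$ for all $Y\in\dom_\cA^\sa(r)$; since $X$ is a selfadjoint point of $\dom_\cA(r)$, i.e.\ $X\in\dom_\cA^\sa(r)$, we deduce $X\in\dom_\cA(\br)$ and $r(X)=\br(X)$, which is once again the desired conclusion.

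Thus the proof is essentially an assembly of Theorems~\ref{thm:FLR_matrices_of_rational_expressions}, \ref{thm:sameOnX_generalized} and~\ref{thm:rep_min}, and I do not expect a genuine obstacle. The only points requiring a little care are the sign bookkeeping when recasting $-v^\ast Q(x)^{-1}v$ in the generalized descriptor form in part~(1), and --- in both parts --- the verification that $X$ actually lies in $\dom_\cA(\br)$: this is precisely where the domain inclusion built into (selfadjoint) formal linear representations does the work in part~(1), where the hypothesis does it in case~(i) of part~(2), and where minimality together with stable finiteness (via Theorem~\ref{thm:rep_min}) does it in case~(ii). I should also flag that the regularity hypothesis in part~(2) is used only so that the evaluation-transfer results of Theorems~\ref{thm:sameOnX_generalized} and~\ref{thm:rep_min} (and the existence of minimal realizations in Lemma~\ref{lem:descr}) are available, whereas the faithful-trace hypothesis enters solely through stable finiteness.
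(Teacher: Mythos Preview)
Your proposal is correct and follows essentially the same route as the paper's own proof: part~(1) is read off directly from Definition~\ref{def:matval_rep_sa}, and part~(2) invokes Lemma~\ref{lem:trace} for stable finiteness, then Theorem~\ref{thm:sameOnX_generalized}~\eqref{it:sameOnX_descriptor} for case~(i) and Theorem~\ref{thm:rep_min}~(ii) for case~(ii). Your additional remarks on the sign bookkeeping and on why $\br_\rho$ qualifies as a \emph{generalized} descriptor realization (no invertibility of $\Lambda(0)$ required) are accurate elaborations that the paper leaves implicit.
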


\begin{proof}
We need to check that the $\br$'s given in the statement of the theorem satisfy the conditions of Definition \ref{def:realized_by}.

\begin{itemize}

 \item[(1)] Given any selfadjoint matrix-valued formal linear representation $\rho=(Q,v)$ of $r$, we know according to Definition \ref{def:matval_rep_sa}, since $X$ belongs to $\dom^\sa_\cA(r)$ by assumption, that $X$ must lie in $\dom_\cA(Q^{-1}) = \dom_\cA(\br_\rho)$, and moreover, that the equality $r(X) = - v^\ast Q(X)^{-1} v = \br_\rho(X)$ holds. This proves that $r$ is realized at $X$ by $\br_\rho$.

 \item[(2)] Note that the additional assumption on $\phi$ being a faithful trace guarantees that $\cA$ is stably finite according to Lemma \ref{lem:trace}.
      \begin{itemize}
       \item[(i)] If we suppose that $X \in \dom_\cA(\br)$, i.e. $X\in \dom_\cA(r) \cap \dom_\cA(\br)$, then the values $r(X)$ and $\br(X)$ must coincide according to Theorem \ref{thm:sameOnX_generalized} \eqref{it:sameOnX_descriptor}. This means that $r$ is realized at $X$ by $\br$, as claimed.
       \item[(ii)] Since $X\in \dom_\cA(r)$ by assumption, Theorem \ref{thm:rep_min} (ii) tells us that $X$ belongs to the domain $\dom_\cA(\br)$ of the given minimal descriptor realization $\br$ of $r$ and that $r$ and $\br$ yield the same when evaluated at the point $X$. Thus $r$ is realized at $X$ by $\br$.
      \end{itemize}

\end{itemize}
This concludes the proof.
\end{proof}

\subsubsection{Uniqueness of the representation}

In Theorem \ref{thm:mainrep2}, we constructed for a given realization
$$\br(x) = \Delta +  \Xi^\ast (M_0  - M_1  x_1  - \dots  -  M_g  x_g )^{-1} \Xi$$
of $r(x)$ a matrix $\hpen(x)$ by \eqref{eq:shifted_linearization}, i.e.
$$\hpen(x) = \begin{pmatrix} \Delta & \Xi^\ast\\ \Xi & -(M_0 - L_M(x))\end{pmatrix}.$$

The uniqueness result formulated in Lemma \ref{lem:descr} for minimal descriptor realizations of $r(x)$ guarantees that matrices $\hpen(x)$ and $\widetilde{\hpen}(x)$ obtained from different minimal descriptor realizations of $r(x)$ with the same feed through term $\Delta$ are related by $\hpen(x) = \hat{S}^\ast \widetilde{\hpen}(x) \hat{S}$, where the matrix $\hat{S}$ is of the form
$$\hat{S} = \begin{pmatrix} I_k & 0\\ 0 & S\end{pmatrix}$$
for some invertible $N\times N$ matrix $S$.

Indeed, if we consider two descriptor realizations
$$\br(x) = \Delta +  \Xi^\ast (M_0  - L_M(x))^{-1} \Xi \quad\text{and}\quad \widetilde{\br}(x) = \Delta +  \widetilde{\Xi}^\ast (\widetilde{M}_0 - L_{\widetilde{M}}(x))^{-1} \widetilde{\Xi}$$
of $r$, which both satisfy the minimality constraint, we know from Lemma \ref{lem:descr} that there is an invertible $N\times N$ matrix $S$ satisfying $S^\ast \widetilde{M}_0 S = M_0$, such that
\begin{equation}
S M_0 M_j = \widetilde{M}_0 \widetilde{M}_j S,  \qquad      
S M_0 \Xi = \widetilde{M}_0 \widetilde{\Xi},    \qquad
\Xi^\ast  = \widetilde{\Xi}^\ast S.
\end{equation}
Hence, the matrices
$$\hpen(x) = \begin{pmatrix} \Delta & \Xi^\ast\\ \Xi & -(M_0 - L_M(x))\end{pmatrix} \qquad\text{and}\qquad \widetilde{\hpen}(x) = \begin{pmatrix} \Delta & \widetilde{\Xi}^\ast\\ \widetilde{\Xi} & -(\widetilde{M}_0 - L_{\widetilde{M}}(x))\end{pmatrix},$$
which we constructed according to Theorem \ref{thm:mainrep1}, are related by
$$\hpen(x) = \hat{S}^\ast \widetilde{\hpen}(x) \hat{S}, \qquad\text{where}\qquad \hat{S} := \begin{pmatrix} I_k & 0\\ 0 & S\end{pmatrix}.$$
Indeed, we can check that
$$\hat{S}^\ast \widetilde{\hpen}(x) \hat{S} = \begin{pmatrix} 0 & \widetilde{\Xi}^\ast S\\ S^\ast \widetilde{\Xi} & -S^\ast(\widetilde{M}_0-L_{\widetilde{M}}(x))S\end{pmatrix} = \begin{pmatrix} \Delta & \Xi^\ast\\ \Xi & -(M_0 - L_M(x))\end{pmatrix} = \hpen(x),$$
since it holds true that
\begin{itemize}
 \item $\widetilde{\Xi}^\ast S = \Xi^\ast$ and therefore also $S^\ast \widetilde{\Xi} = \Xi$;
 \item $S^\ast \widetilde{M}_0 S = M_0$ and $S^\ast \widetilde{M}_j S = M_j$.
\end{itemize}
The formula $S^\ast \widetilde{M}_j S = M_j$ stated above can be shown as follows: The assumption $S M_0 M_j = \widetilde{M}_0 \widetilde{M}_j S$ can be rewritten as $M_j = (M_0 S^{-1} \widetilde{M}_0) \widetilde{M}_j S$ and since by construction $S^\ast \widetilde{M}_0 S = M_0$ holds, which gives $S^\ast = M_0 S^{-1} \widetilde{M}_0$, we finally get the stated relation $S^\ast \widetilde{M}_j S = M_j$.

\subsection{How to calculate distributions and Brown measures of rational expressions}
\label{subsec:main_problems}

We have collected now all tools to settle the question formulated in Section \ref{subsec:polynomials}. In fact, since the linearization techniques that we have presented in Section \ref{sec:Realization} and Section \ref{sec:Algorithm} are by no means limited to noncommutative polynomials but apply equally well to noncommutative rational expressions, especially when they are regular at $0$, we can provide here a solution to the following, far more general problem; this continues \cite{BMS2013}, where only noncommutative polynomials were considered.

\begin{prob}\label{prob:dist}
Let $r$ be a selfadjoint rational expression in the variables $x=(x_1,\dots,x_g)$ and let $X_1,\dots,X_g$ be freely independent selfadjoint elements in some $C^\ast$-probability space $(\cA,\phi)$. We suppose that the evaluation $r(X_1,\dots,X_g)$ is defined, i.e, that $(X_1,\dots,X_g) \in \dom^\sa_\cA(r)$ holds. If the analytic distribution of each of the $X_j$'s is known, how can we compute the analytic distribution of $r(X_1,\dots,X_g)$?
\end{prob}

In \cite{BSS2015}, techniques developed in \cite{BMS2013} were extended to the case of non-selfadjoint polynomials. In such situations, the Brown measure that we have discussed in Section \ref{subsec:Brown} provides an appropriate substitute for the analytic distribution that is used in the selfadjoint setting. By combining their methods with our tools from Section \ref{sec:Realization} and Section \ref{sec:Algorithm}, we can extend the approach of \cite{BSS2015} to noncommutative rational expressions. More precisely, we address the following, non-selfadjoint counterpart of Problem \ref{prob:dist}. 

\begin{prob}\label{prob:Brown}
Let $r$ be an arbitrary rational expression $r$ in the variables $x=(x_1,\dots,x_g)$ and consider freely independent selfadjoint elements $X_1,\dots,X_g$ in some tracial $W^\ast$-probability space $(\cA,\phi)$. Suppose that the evaluation $r(X_1,\dots,X_g)$is defined, i.e., that $(X_1,\dots,X_g) \in \dom_\cA(r)$ holds. If the distribution of each of the $X_j$'s is known, how can we compute the Brown-measure of $r(X_1,\dots,X_g)$?
\end{prob}

At the heart of our approach are Theorem \ref{thm:mainrep1} and Theorem \ref{thm:mainrep2}, which will be combined with Theorem \ref{thm:subordination}. In fact, they allow us to provide an algorithmic solution to both of the aforementioned Problems \ref{prob:dist} and \ref{prob:Brown}, which is easily accessible for numerical computations; see Section \ref{subsec:examples} for examples.

\subsubsection{An algorithmic solution of Problem \ref{prob:dist}}

Let us first discuss our solution to Problem \ref{prob:dist}. This is the content of the following algorithm.

\begin{algorithm}\label{alg:dist}
Let $r$ be a selfadjoint noncommutative rational expression in the variables $x=(x_1,\dots,x_g)$ and let $X_1,\dots,X_g$ be freely independent selfadjoint elements in some $C^\ast$-probability space $(\cA,\phi)$, such that the condition $X=(X_1,\dots,X_g) \in \dom_\cA^\sa(r)$ is satisfied. If the scalar-valued Cauchy transforms $G_{X_1},\dots,G_{X_g}$ are given, then the distribution $\mu_{r(X)}$ of $r(X)$ can be obtained as follows:
\begin{enumerate}
 \item By means of Theorem \ref{thm:mainrep2} find a descriptor realization $$\br(x) = \Delta + \Xi^\ast \Lambda_M(x)^{-1} \Xi$$ of size $1\times 1$ with $\Lambda_M$ being of size $N\times N$ for some $N$, such that $r$ is realized at $X$ by $\br$ in the sense of Definition \ref{def:realized_by}.
 \item Consider the selfadjoint affine linear pencil $$\widehat{\Lambda}(x) = \widehat{\Lambda}_0 + \widehat{\Lambda}_1 x_1 + \ldots + \widehat{\Lambda}_g x_g$$ associated to $\br$, which was defined in \eqref{eq:shifted_linearization}, i.e.
 $$\widehat{\Lambda}(x) = \begin{pmatrix} \Delta & \Xi^\ast\\ \Xi & -\Lambda(x)\end{pmatrix}.$$
 \item Apply Theorem \ref{thm:mainrep1} in the case $k=1$ and deduce from \eqref{eq:repG} that the scalar-valued Cauchy transform of $r(X)$ is determined by the $M_{N+1}(\CC)$-valued Cauchy transform of $\widehat{\Lambda}(X)$; in fact, we have
$$G_{r(X)} (z) = \lim_{\varepsilon\searrow 0} \begin{pmatrix} 1 & 0\end{pmatrix} G_{\widehat{\Lambda}(X)} \left( \begin{pmatrix} z & 0\\ 0 & i \varepsilon 1_n\end{pmatrix} \right) \begin{pmatrix} 1 \\ 0\end{pmatrix}$$
for each $z\in\CC^+$.
 \item According to Remark \ref{freeness_matrices}, the operators $\hpen_1 \otimes X_1,\dots, \hpen_g \otimes X_g$ are freely independent with amalgamation over $M_{N+1}(\CC)$. Hence, the $M_{N+1}(\CC)$-valued Cauchy transform of $\hpen(X) - \hpen_0 \otimes 1_\cA$ can be computed by means of Theorem \ref{thm:subordination}; note that the matrix-valued Cauchy-transforms of $\hpen_1 \otimes X_1,\dots, \hpen_g \otimes X_g$ can be computed as explained in Remark \ref{rem:Cauchy_matval}. The desired $M_{N+1}(\CC)$-valued Cauchy transform of $\hpen(X)$ is then obtained by the following shift $$G_{\hpen(X)}(B) = G_{\hpen(X)-\hpen_0 \otimes 1_\cA}(B-\hpen_0)$$ for all $B\in \bH^+(M_{N+1}(\CC))$.
 \item Having computed $G_{r(X)}$, the desired distribution of $r(X)$ can then be obtained by Stieltjes inversion formula; see Remark \ref{rem:Stieltjes}.
\end{enumerate}
\end{algorithm}

\subsubsection{An algorithmic solution of Problem \ref{prob:Brown}}

Finally, let us discuss the algorithmic solution of Problem \ref{prob:Brown}. 

\begin{algorithm}\label{alg:Brown}
Let $r$ be any noncommutative rational expression in the variables $x=(x_1,\dots,x_g)$ and let $X_1,\dots,X_g$ be freely independent selfadjoint elements in some $C^\ast$-probability space $(\cA,\phi)$, for which $X=(X_1,\dots,X_g) \in \dom_\cA(r)$ holds. If the scalar-valued Cauchy transforms $G_{X_1},\dots,G_{X_g}$ are given, then the Brown measure $\nu_{r(X_1,\dots,X_g)}$ of $r(X_1,\dots,X_g)$ can be obtained then as follows:
\begin{enumerate}
 \item Consider the following matrix of noncommutative rational expressions
\begin{equation}\label{Brown-matrix}
\ur := \begin{pmatrix} 0 & r\\ r^\ast & 0 \end{pmatrix},
\end{equation}
where $r^\ast$ denotes the adjoint of $r$ in the sense of Remark \ref{rem:rational_expression_adjoint}. Then the matrix $\ur$ is selfadjoint in the sense of Section \ref{subsubsec:symmetric_mat-val_rational_expressions}.
 \item By means of Theorem \ref{thm:mainrep2}, find a descriptor realization $$\br(x) = \Delta + \Xi^\ast \Lambda_M(x)^{-1} \Xi$$ of size $2\times 2$ with $\Lambda_M$ being of size $N\times N$ for some $N$, such that $r$ is realized at $X$ by $\br$ in the sense of Definition \ref{def:realized_by}.
 \item Consider the selfadjoint affine linear pencil $$\widehat{\Lambda}(x) = \widehat{\Lambda}_0 + \widehat{\Lambda}_1 x_1 + \ldots + \widehat{\Lambda}_g x_g$$ associated to $\br$, which was defined in \eqref{eq:shifted_linearization}, i.e.
 $$\widehat{\Lambda}(x) = \begin{pmatrix} \Delta & \Xi^\ast\\ \Xi & -\Lambda(x)\end{pmatrix}.$$
 \item Apply Theorem \ref{thm:mainrep1} in the case $k=2$ and deduce from \eqref{eq:repG} that the scalar-valued Cauchy transform of $r(X)$ is determined by the $M_{N+2}(\CC)$-valued Cauchy transform of $\hpen(X)$; in fact, we have
$$G_{\ur(X)} (B) = \lim_{\varepsilon\searrow 0} \begin{pmatrix} I_2 & 0\end{pmatrix} G_{\hpen(X)} \left( \begin{pmatrix} B & 0\\ 0 & i \varepsilon 1_n\end{pmatrix} \right) \begin{pmatrix} I_2\\ 0\end{pmatrix}$$
for each $B\in\bH^+(M_2(\CC))$.
 \item According to Remark \ref{freeness_matrices}, the operators $\hpen_1 \otimes X_1,\dots, \hpen_g \otimes X_g$ are freely independent with amalgamation over $M_{N+2}(\CC)$. Hence, the $M_{N+2}(\CC)$-valued Cauchy transform of $\hpen(X)-\hpen_0 \otimes 1_\cA$ can be computed by means of Theorem \ref{thm:subordination}; note that the matrix-valued Cauchy-transforms of $\hpen_1 \otimes X_1,\dots, \hpen_g \otimes X_g$ can be computed as explained in Remark \ref{rem:Cauchy_matval}. The desired $M_{N+2}(\CC)$-valued Cauchy transform of $\hpen(X)$ is then obtained by the following shift $$G_{\hpen(X)}(B) = G_{\hpen(X)-\hpen_0 \otimes 1_\cA}(B-\hpen_0)$$ for all $B\in \bH^+(M_{N+2}(\CC))$.
 \item The regularized Cauchy transform $G_{r(X),\varepsilon}$ is determined by \eqref{eq:hermCauchy}, i.e., we have
$$G_{r(X),\varepsilon}(z) = \Big[G_{\ur(X)}\Big(\begin{pmatrix} i\varepsilon & z \\ \overline{z} & i\varepsilon\end{pmatrix}\Big)\Big]_{2,1}$$
for all $z\in\CC$.
 \item The regularized Brown measure $\mu_{r(X),\varepsilon}$ can be obtained, according to \eqref{eq:BrownCauchy}, from the regularized Cauchy transform by
$$d\mu_{r(X),\varepsilon}(z) = \frac{1}{\pi} \frac{\partial}{\partial\overline{z}} G_{r(X),\varepsilon}(z)\, d\lambda^2(z).$$
 \item As $\varepsilon\searrow0$, the regularized Brown measure $\mu_{r(X),\varepsilon}$ converges weakly to the Brown measure $\nu_{r(X)}$.
\end{enumerate}
\end{algorithm}

We add here the useful observation that a selfadjoint descriptor realization of the matrix $\hat{r}$, which we introduced above in \eqref{Brown-matrix}, can be constructed from any realization of the involved rational expression $r$. The precise statement, which in addition covers the case of rational expressions, which are not necessarily regular at $0$, reads as follows.

\begin{lem}\label{lem:block-realization}
Let $r$ be a scalar-valued rational expression in the variables $x=(x_1,\dots,x_g)$ and consider as in \eqref{Brown-matrix} the matrix-valued rational expression $\hat{r}$ given by
$$\hat{r} := \begin{pmatrix} 0 & r\\ r^\ast & 0 \end{pmatrix}.$$
\begin{enumerate}
 \item If $\rho=(u,Q,v)$ is any formal linear representation of $r$, then $$\hat{\rho} = (\hat{Q},\hat{v}) := \Big(\begin{pmatrix} 0 & Q\\ Q^\ast & 0\end{pmatrix}, \begin{pmatrix} 0 & v\\ u^\ast & 0\end{pmatrix}\Big)$$ gives a selfadjoint formal linear representation of $\hat{r}$.
 \item If $r$ is regular at $0$ and if $$\br(x) = \Delta + \Xi_1 (M_0 - L_M(x))^{-1} \Xi_2$$ with $L_M(x) = M_1 x_1 + \dots + M_g x_g$ is any descriptor realization of $r$, then we may obtain a selfadjoint realization of $\hat{r}$ by $$\hat{\br}(x) = \begin{pmatrix} 0 & \Delta\\ \Delta^\ast & 0\end{pmatrix} + \hXi^\ast (\hM_0 - L_{\hM}(x))^{-1} \hXi,$$ where we put $\hXi := \begin{pmatrix} 0 & \Xi_2\\ \Xi_1^\ast & 0\end{pmatrix}$ and $\hM_j := \begin{pmatrix} 0 & M_j\\ M_j^\ast & 0\end{pmatrix}$ for $j=0,1,\dots,g$.
\end{enumerate}
\end{lem}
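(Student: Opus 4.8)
The plan is to verify, in each of the two parts, that the proposed object meets the defining conditions — for (1), those of a selfadjoint matrix-valued formal linear representation (Definition \ref{def:matval_rep_sa}), and for (2), those of a selfadjoint descriptor realization together with $\bbM(\bbK)$-evaluation equivalence with $\hat r$. Two elementary facts will be used throughout. First, over any unital algebra a block matrix of the form $\begin{pmatrix} 0 & R \\ S & 0 \end{pmatrix}$ is invertible precisely when both $R$ and $S$ are, with inverse $\begin{pmatrix} 0 & S^{-1} \\ R^{-1} & 0 \end{pmatrix}$. Second, specializing Remark \ref{rem:rational_expression_adjoint} to selfadjoint points (a short structural induction), for a selfadjoint tuple $X$ one has $X \in \dom_\cA(r^\ast)$ iff $X \in \dom_\cA(r)$, and then $r^\ast(X) = r(X)^\ast$; hence $\dom_\cA^\sa(\hat r)$ consists of the selfadjoint tuples in $\dom_\cA(r)$ and $\hat r(X) = \begin{pmatrix} 0 & r(X) \\ r(X)^\ast & 0 \end{pmatrix}$ there (recall also that $\hat r$ is selfadjoint in the sense of Section \ref{subsubsec:symmetric_mat-val_rational_expressions}). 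I will likewise use the adjoint-of-a-realization recalled after Remark \ref{rem:rational_expression_adjoint}: $\br^\ast(x) = \Delta^\ast + \Xi_2^\ast (M_0 - L_{M^\ast}(x))^{-1} \Xi_1^\ast$, with $L_{M^\ast}(x) = M_1^\ast x_1 + \dots + M_g^\ast x_g$, is a descriptor realization of $r^\ast$.

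For (1), I would first note that, writing $Q = Q^{(0)} + \sum_j Q^{(j)} x_j$, the pencil $\hat Q$ has coefficient matrices $\begin{pmatrix} 0 & Q^{(j)} \\ (Q^{(j)})^\ast & 0 \end{pmatrix}$, each selfadjoint, so $\hat Q$ is a selfadjoint pencil. Then, given a unital complex $\ast$-algebra $\cA$ and $X \in \dom_\cA^\sa(\hat r)$, I would use the fact above that $X \in \dom_\cA(r)$, so $Q(X)$ — and hence $Q^\ast(X) = Q(X)^\ast$ — is invertible, whence $\hat Q(X) = \begin{pmatrix} 0 & Q(X) \\ Q(X)^\ast & 0 \end{pmatrix}$ is invertible by the block-inverse fact; this is the required inclusion $\dom_\cA^\sa(\hat r) \subseteq \dom_\cA(\hat Q^{-1})$. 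Finally a short block computation with $\hat v = \begin{pmatrix} 0 & v \\ u^\ast & 0 \end{pmatrix}$ should give $-\hat v^\ast \hat Q(X)^{-1} \hat v = \begin{pmatrix} 0 & -uQ(X)^{-1}v \\ (-uQ(X)^{-1}v)^\ast & 0 \end{pmatrix}$, and since $-uQ(X)^{-1}v = r(X)$ (as $\rho$ is a formal linear representation of $r$), this equals $\begin{pmatrix} 0 & r(X) \\ r(X)^\ast & 0 \end{pmatrix} = \hat r(X)$, which is exactly the identity demanded by Definition \ref{def:matval_rep_sa}.

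For (2), I would first check that $\hat\br$ has the shape of a selfadjoint descriptor realization: its feed-through term $\begin{pmatrix} 0 & \Delta \\ \Delta^\ast & 0 \end{pmatrix}$ is selfadjoint, the matrices $\hat M_j$ are selfadjoint, $\hat M_0$ satisfies $\hat M_0^2 = I$ (so it is a selfadjoint signature matrix), and its outer matrices are $\hat\Xi^\ast$ and $\hat\Xi$. Then, using $\hat M_0 - L_{\hat M}(x) = \begin{pmatrix} 0 & M_0 - L_M(x) \\ M_0 - L_{M^\ast}(x) & 0 \end{pmatrix}$ together with the block-inverse fact, I would identify $\dom_\cA(\hat\br)$ with $\dom_\cA(\br) \cap \dom_\cA(\br^\ast)$ and obtain, by a routine block multiplication over an arbitrary unital algebra, $\hat\br(X) = \begin{pmatrix} 0 & \br(X) \\ \br^\ast(X) & 0 \end{pmatrix}$ at every such $X$. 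Specializing to $\cA = M_n(\bbK)$ and invoking that $\br$ and $r$, resp. $\br^\ast$ and $r^\ast$, are $\bbM(\bbK)$-evaluation equivalent, I would conclude $\hat\br(X) = \begin{pmatrix} 0 & r(X) \\ r^\ast(X) & 0 \end{pmatrix} = \hat r(X)$ on $\dom_{\bbM(\bbK)}(\hat\br) \cap \dom_{\bbM(\bbK)}(\hat r)$, i.e. $\hat\br$ is a selfadjoint descriptor realization of $\hat r$.

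None of the steps presents a genuine obstacle — the computations are the elementary block-matrix manipulations above. The only things requiring a little care are the bookkeeping in (2), namely keeping the adjoint rational expression $r^\ast$ distinct from the adjoint descriptor realization $\br^\ast$ and noting that $\hat M_0$ is again a signature matrix, together with the mild point in (1) that $\dom_\cA^\sa(\hat r)$ must be unwound through the selfadjoint-point version of Remark \ref{rem:rational_expression_adjoint}.
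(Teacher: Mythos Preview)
Your proposal is correct and follows essentially the same block-matrix computation as the paper's proof. The one noteworthy difference is in part~(2): the paper restricts to selfadjoint matrix tuples $X$, obtains $\hat{\br}(X)=\begin{pmatrix}0&\br(X)\\ \br(X)^\ast&0\end{pmatrix}=\hat r(X)$ there, and then invokes \cite[Proposition~A.7]{HMV06} to upgrade $\bbM(\bbK)_\sa$-evaluation equivalence to full $\bbM(\bbK)$-evaluation equivalence; you instead work with arbitrary $X$ in the common matrix domain, using that $\br^\ast$ is a realization of $r^\ast$, to get $\hat{\br}(X)=\begin{pmatrix}0&\br(X)\\ \br^\ast(X)&0\end{pmatrix}=\begin{pmatrix}0&r(X)\\ r^\ast(X)&0\end{pmatrix}=\hat r(X)$ directly. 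This avoids the external reference at the cost of the (easy) observation that $\br^\ast$ realizes $r^\ast$, and is arguably a little cleaner.
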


\begin{proof}
(1) Let $\cA$ be any $\ast$-algebra. We clearly have that $\dom_\cA(Q^{-1}) = \dom_\cA(\hat{Q}^{-1})$, and since $\rho$ is a formal linear representation of $r$, we have by definition $\dom_\cA(r) \subseteq \dom_\cA(Q^{-1})$. In combination, this gives $\dom_\cA(r) \subseteq \dom_\cA(\hat{Q}^{-1})$ and in particular $\dom_\cA^\sa(r) \subseteq \dom_\cA(\hat{Q}^{-1})$. Furthermore, $\rho$ enjoys the property that $r(X) = - u Q(X)^{-1} v$ and hence $r^\ast(X^\ast) = - v^\ast Q^\ast(X^\ast)^{-1} u^\ast$ holds for any $X\in\dom_\cA(r)$. Thus, if we take $X\in\dom^\sa_\cA(r)$, we may deduce that $r(X) = - u Q(X)^{-1} v$ and $r^\ast(X) = - v^\ast Q^\ast(X)^{-1} u^\ast$ holds, so that
\begin{align*}
-\hat{v}^\ast \hat{Q}(X)^{-1} \hat{v}
&= -\begin{pmatrix} 0 & u\\ v^\ast & 0\end{pmatrix} \begin{pmatrix} 0 & Q(X)\\ Q^\ast(X) & 0\end{pmatrix}^{-1} \begin{pmatrix} 0 & v\\ u^\ast & 0\end{pmatrix}\\
&= -\begin{pmatrix} 0 & u\\ v^\ast & 0\end{pmatrix} \begin{pmatrix} 0 & Q^\ast(X)^{-1}\\ Q(X)^{-1} & 0\end{pmatrix} \begin{pmatrix} 0 & v\\ u^\ast & 0\end{pmatrix}\\
&= -\begin{pmatrix} 0 & u\\ v^\ast & 0\end{pmatrix} \begin{pmatrix} Q^\ast(X)^{-1} u^\ast & 0\\ 0 & Q(X)^{-1} v \end{pmatrix}\\
&= -\begin{pmatrix} 0 & u Q(X)^{-1} v \\ v^\ast Q^\ast(X)^{-1} u^\ast & 0\end{pmatrix}\\
&= \begin{pmatrix} 0 & r(X)\\ r^\ast(X) & 0 \end{pmatrix}\\
&= \hat{r}(X).
\end{align*}
This shows that $\hat{\rho}$ is indeed a selfadjoint formal linear representation.

(2) First of all, we note that $\hM_0^\ast = \hM_0$ and $\hM_0^2 = I_{2k}$, since by assumption $M_0^\ast = M_0$ and $M_0^2 = I_k$ holds. Thus, $\hat{\br}$ is indeed a selfadjoint descriptor realization of some matrix-valued rational expression. It remains to prove that it forms in fact a realization of $\hat{r}$. For this purpose, it is according to \cite[Proposition A.7]{HMV06} sufficient to check that $\hat{r}$ and $\hat{\br}$ are $\bbM(\CC)_\sa$-evaluation equivalent.

For doing so, we take any matrix $X\in\bbM(\CC)_\sa$, which belongs to the domain of $\hat{r}$ and to the domain of $\hat{\br}$. Since
$$\hM_0 -  L_{\hM}(X) =  \begin{pmatrix} 0 & M_0 - L_M(X)\\ M_0^\ast - L_{M^\ast}(X) & 0\end{pmatrix},$$
we have that $X$ also belongs to the domain of $\br$ and furthermore
\begin{align*}
\hXi^\ast (\hM_0 - & L_{\hM}(X))^{-1} \hXi\\
&= \begin{pmatrix} 0 & \Xi_2\\ \Xi_1^\ast & 0\end{pmatrix}^\ast \begin{pmatrix} 0 & M_0 - L_M(X)\\ M_0^\ast - L_{M^\ast}(X) & 0\end{pmatrix}^{-1} \begin{pmatrix} 0 & \Xi_2\\ \Xi_1^\ast & 0\end{pmatrix}\\
&= \begin{pmatrix} 0 & \Xi_1\\ \Xi_2^\ast & 0\end{pmatrix} \begin{pmatrix} 0 & (M_0^\ast - L_{M^\ast}(X))^{-1}\\ (M_0 - L_M(X))^{-1} & 0\end{pmatrix} \begin{pmatrix} 0 & \Xi_2\\ \Xi_1^\ast & 0\end{pmatrix}\\
&= \begin{pmatrix} 0 & \Xi_1\\ \Xi_2^\ast & 0\end{pmatrix} \begin{pmatrix} (M_0^\ast - L_{M^\ast}(X))^{-1}\Xi_1^\ast & 0\\ 0 & (M_0 - L_M(X))^{-1}\Xi_2\end{pmatrix}\\
&= \begin{pmatrix} 0 & \Xi_1(M_0 - L_M(X))^{-1}\Xi_2\\ \Xi_2^\ast(M_0^\ast - L_{M^\ast}(X))^{-1}\Xi_1^\ast & 0\end{pmatrix},
\end{align*}
so that
$$\hat{\br}(X) = \begin{pmatrix} 0 & \Delta\\ \Delta^\ast & 0\end{pmatrix} + \hXi^\ast (\hM_0 - L_{\hM}(X))^{-1} \hXi = \begin{pmatrix} 0 & \br(X)\\ \br(X)^\ast & 0 \end{pmatrix}.$$
Moreover, since $\br$ is a realization of $r$ and therefore $\br(X) = r(X)$ holds, we may continue
$$\hat{\br}(X) = \begin{pmatrix} 0 & \br(X)\\ \br(X)^\ast & 0 \end{pmatrix} = \begin{pmatrix} 0 & r(X)\\ r(X)^\ast & 0 \end{pmatrix} = \hat{r}(X).$$
This concludes the proof.
\end{proof}

A more complicated construction underlies the minimal symmetric realization asserted in Lemma \ref{lem:symR} \eqref{lem:symR2}.

\subsection{Examples}
\label{subsec:examples}

We conclude with several concrete examples by which we discuss the theory presented above. While the Examples \ref{ex:anticommutator} and \ref{ex:rational-distr} concern Problem \ref{prob:dist} for the anticommutator $p(x_1,x_2) = x_1x_2+x_2x_1$ and for some rational expression $r$ represented by the descriptor realization
$$\br(x_1,x_2) = \begin{pmatrix} \frac{1}{2} & 0 \end{pmatrix} \begin{pmatrix} 1-\frac{1}{4} x_1 & -\frac{1}{4}x_2\\ -\frac{1}{4}x_2 & 1-\frac{1}{4} x_1 \end{pmatrix}^{-1} \begin{pmatrix} \frac{1}{2}\\ 0\end{pmatrix},$$
Example \ref{ex:rational-Brown} concerns Problem \ref{prob:Brown} for some rational expression $r$ represented by the the descriptor realization
$$\br(x_1,x_2) = \begin{pmatrix} 0 & \frac{1}{2} \end{pmatrix} \begin{pmatrix} 1-\frac{1}{4} x_1 & -ix_2\\ -\frac{1}{4}x_2 & 1-\frac{1}{4} x_1 \end{pmatrix}^{-1} \begin{pmatrix} \frac{1}{2}\\ 0\end{pmatrix}.$$

Since we will perform the numerical computations in the case of freely independent elements $X_1,\dots,X_g$ whose distribution is given either by the semicircular distribution or by the free Poisson distribution (where the latter is also known as the Marchenko-Pastur distribution), their joint distribution arises also as the limit of the joint distribution of (classically) independent Gaussian and Wishart random matrices $(X_1^{(N)},\dots,X_g^{(N)})$ of dimension $N\times N$, respectively, in the limit $N\to \infty$.

Thus, we will compare below the computed distribution of $r(X_1,\dots,X_g)$ with the normalized histogram of all eigenvalues of the random matrix obtained by $r(X_1^{(N)},\dots,X_g^{(N)})$. This will show in all cases a nice conformity.

Note that, whereas in the special case of a noncommutative polynomial $r$ the convergence of the eigenvalue distribution of $r(X_1^{(N)},\dots,X_g^{(N)})$ to $r(X_1,\dots,X_g)$ is obvious, the situation for general $r$ is more intricate. The main difficulty here his that $(X_1^{(N)},\dots,X_g^{(N)})$ must belong almost surely to the domain of $r$ if their dimension $N$ is sufficiently large. Nevertheless, it is conceivable that this is can be shown in many cases. For recent progress in this direction, see \cite{Y17}.

In the Brown measure case, however, despite the amazing similarity between the output of our algorithm and of the random matrix simulation, there is up to now no general statement which would give a rigorous justification of this phenomenon.

\begin{figure}
\centering
\includegraphics[height=8cm]{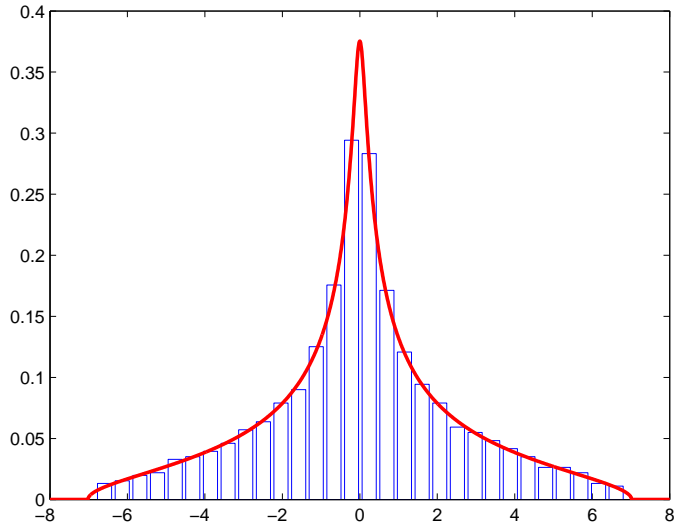}
\caption{Histogram of eigenvalues of $p(X_1^{(N)},X_2^{(N)})$, where $p(x_1,x_2)$ was defined in Example \ref{ex:anticommutator}, for one realization of independent random matrices $X_1^{(N)},X_2^{(N)}$, where $X_1^{(N)}$ is a Wishart random matrix and $X_2^{(N)}$ a Gaussian random matrix, both of size $N=1000$, compared with the distribution of $p(X_1,X_2)$ for freely independent elements $X_1,X_2$, where $X_1$ is a free Poisson element and $X_2$ a semicircular element.}
\label{fig:anticommutator-ex2}
\end{figure}

\begin{example}[Anti-commutator, see Figure \ref{fig:anticommutator-ex2}]
\label{ex:anticommutator}
We consider the so-called \textbf{anti-commutator}\index{commutator!anti-}
$$p(x_1,x_2) := x_1 x_2 + x_2 x_1.$$
In order to produce a (selfadjoint) representation and finally a (selfadjoint) realization of $p$, we could of course just apply the algorithm that we have presented in Section \ref{sec:Algorithm}. However, since we can write
$$p(x_1,x_2) = \begin{pmatrix} x_1 & x_2\end{pmatrix} \begin{pmatrix} 0 & 1\\ 1 & 0 \end{pmatrix}^{-1} \begin{pmatrix} x_1 \\ x_2\end{pmatrix},$$
this gives directly the selfadjoint representation
$$p(x_1,x_2) = - \begin{pmatrix} 0 & 0 & 0 & 1 \end{pmatrix} \begin{pmatrix} 0 & x_1 & x_2 & -1\\ x_1 & 0 & -1 & 0\\ x_2 & -1 & 0 & 0\\ -1 & 0 & 0 & 0\end{pmatrix}^{-1} \begin{pmatrix} 0 \\ 0 \\ 0 \\ 1 \end{pmatrix}.$$

According to Theorem \ref{thm:mainrep1}, we consider now the matrix
$$\hpen(x_1,x_2) =
\begin{pmatrix}
0 & 0 & 0 & 0 & 1\\
0 & 0 & x_1 & x_2 & -1\\
0 & x_1 & 0 & -1 & 0\\
0 & x_2 & -1 & 0 & 0\\
1 & -1 & 0 & 0 & 0
\end{pmatrix}$$
which decomposes as $\hpen(x_1,x_2) = \hpen_0 + \hpen_1 x_1 + \hpen_2 x_2$, where
$$\hpen_0 =
\begin{pmatrix}
0 & 0 & 0 & 0 & 1\\
0 & 0 & 0 & 0 & -1\\
0 & 0 & 0 & -1 & 0\\
0 & 0 & -1 & 0 & 0\\
1 & -1 & 0 & 0 & 0
\end{pmatrix}, \quad \hpen_1 =
\begin{pmatrix}
0 & 0 & 0 & 0 & 0\\
0 & 0 & 1 & 0 & 0\\
0 & 1 & 0 & 0 & 0\\
0 & 0 & 0 & 0 & 0\\
0 & 0 & 0 & 0 & 0
\end{pmatrix}, \quad\text{and}\quad
  \hpen_2 =
\begin{pmatrix}
0 & 0 & 0 & 0 & 0\\
0 & 0 & 0 & 1 & 0\\
0 & 0 & 0 & 0 & 0\\
0 & 1 & 0 & 0 & 0\\
0 & 0 & 0 & 0 & 0
\end{pmatrix}.$$
\end{example}

The shortcut that was used in both of the previous examples in order to produce realizations without using the algorithm of Section \ref{sec:Algorithm} relies on a more general observation, which we include here for the sake of completeness. This is the content of the following remark.

\begin{rem}
Assume that $Q_1,\dots,Q_k$ are rectangular matrices of rational expressions in the variables $x_1,\dots,x_g$, such that $Q_j$ is a $(n_j \times n_{j+1})$-matrix for $j=1,\dots,k$, where $n_1,\dots,n_{k+1}$ are some positive integers. Furthermore, let $\Xi_j\in M_{n_j}(\CC)$, for $j=1,\dots,k+1$, be invertible matrices. Consider now the $(n_1 \times n_{k+1})$-matrix
$$r := \Xi_1^{-1} Q_1 \Xi_2^{-1} Q_2 \cdots \Xi_k^{-1} Q_k \Xi_{k+1}^{-1}$$
of rational expression and introduce a matrix $Q$ of size $n:=n_1 + \dots + n_k$ by
$$Q:=
\begin{pmatrix}
            &         &         & Q_1     & -\Xi_1\\
            &         & \iddots & - \Xi_2 & \\
            & Q_{k-1} & \iddots &         & \\
Q_k         & - \Xi_k &         &         & \\
- \Xi_{k+1} &         &         &         &
\end{pmatrix}.$$
It is then not hard to check inductively that we have for any unital complex algebra $\cA$ and for any $(X_1,\dots,X_g)\in \dom_\cA(r) = \bigcap^k_{j=1} \dom_\cA(Q_j)$ that $(X_1,\dots,X_g) \in \dom_\cA(Q^{-1})$ and
$$r(X_1,\dots,X_g) = - \begin{pmatrix} 0 & \hdots & 0 & 0 & I_{n_{k+1}}\end{pmatrix} Q(X_1,\dots,X_g)^{-1} \begin{pmatrix} 0\\ \vdots\\ 0 \\ 0 \\ I_{n_1}\end{pmatrix}.$$
In fact, the induction step is based on the observation that if we put
$$\widetilde{r} := \Xi_1^{-1} Q_1 \Xi_2^{-1} Q_2 \cdots Q_{k-1} \Xi_k^{-1}$$
and correspondingly
$$\widetilde{Q} :=
\begin{pmatrix}
        &         & Q_1     & -\Xi_1\\
        & \iddots & - \Xi_2 & \\
Q_{k-1} & \iddots &         & \\
- \Xi_k &         &         & 
\end{pmatrix},$$
then we obtain a block decomposition
$$Q = \left(\begin{array}{c|c} \begin{matrix} 0\\ \vdots\\ 0\\ Q_k\end{matrix} & \widetilde{Q}\\ \hline -\Xi_k & \begin{matrix} 0 & \hdots & 0 & 0 \end{matrix}\end{array}\right).$$
\end{rem}

\begin{figure}
\centering
\includegraphics[height=8cm]{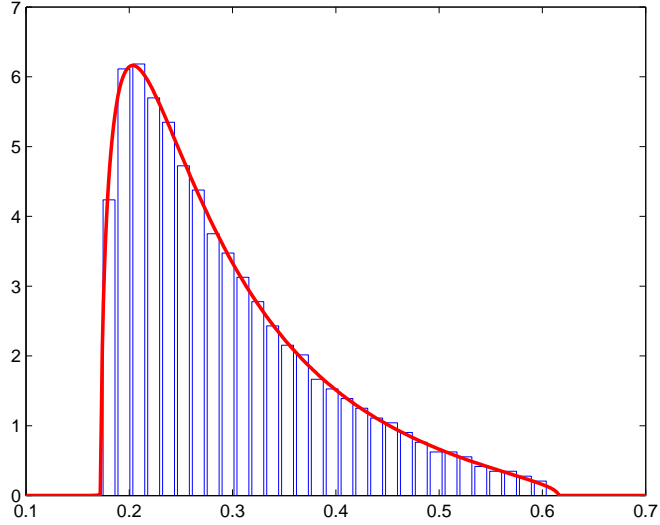}
\caption{Histogram of eigenvalues of $r(X_1^{(N)},X_2^{(N)})$ for one realization of independent Gaussian random matrices $X_1^{(N)},X_2^{(N)}$ of size $N=1000$, compared with the distribution of $r(X_1,X_2)$ for freely independent semicircular elements $X_1,X_2$. See Example \ref{ex:rational-distr}.}
\label{fig:rational-distr}
\end{figure}

\begin{example}
\label{ex:rational-distr}
Consider the following slight modification of the rational expression $r(x_1,x_2)$ that already appeared in Example \ref{ex:descriptor}, namely
$$r(x_1,x_2) = (4-x_1)^{-1} + (4-x_1)^{-1}x_2 \left((4-x_1)-x_2(4-x_1)^{-1}x_2\right)^{-1}x_2(4-x_1)^{-1},$$
which admits the selfadjoint realization
$$\br(x_1,x_2) := \begin{pmatrix} \frac{1}{2} & 0 \end{pmatrix} \begin{pmatrix} 1-\frac{1}{4} x_1 & -\frac{1}{4}x_2\\ -\frac{1}{4}x_2 & 1-\frac{1}{4} x_1 \end{pmatrix}^{-1} \begin{pmatrix} \frac{1}{2}\\ 0\end{pmatrix}.$$
According to Theorem \ref{thm:mainrep1}, we introduce
$$\hpen(x_1,x_2) = \begin{pmatrix} 0 & \frac{1}{2} & 0\\  \frac{1}{2} & -1+\frac{1}{4} x_1 &  \frac{1}{4}x_2\\ 0 &  \frac{1}{4}x_2 & -1+\frac{1}{4} x_1\end{pmatrix},$$
which decomposes as $\hpen(x_1,x_2) = \hpen_0 + \hpen_1 x_1 + \hpen_2 x_2$, where
$$\hpen_0 = \begin{pmatrix} 0 &  \frac{1}{2} & 0\\  \frac{1}{2} & -1 & 0\\ 0 & 0 & -1\end{pmatrix}, \quad \hpen_1 = \begin{pmatrix} 0 & 0 & 0\\ 0 & \frac{1}{4} & 0\\ 0 & 0 & \frac{1}{4}\end{pmatrix},\quad\text{and}\quad \hpen_2 = \begin{pmatrix} 0 & 0 & 0\\ 0 & 0 & \frac{1}{4}\\ 0 & \frac{1}{4} & 0\end{pmatrix}.$$
In Figure \ref{fig:rational-distr}, we compare the histogram of eigenvalues of $r(X_1^{(N)},X_2^{(N)})$ for one realization of independent Gaussian random matrices $X_1^{(N)},X_2^{(N)}$ of size $N = 1000$ with the distribution of $r(X_1,X_2)$ for freely independent semicircular elements $X_1,X_2$, calculated according to our algorithm.
\end{example}

\begin{figure}
\centering
\begin{subfigure}{0.47\textwidth}
\begin{center}
\includegraphics[height=6.2cm]{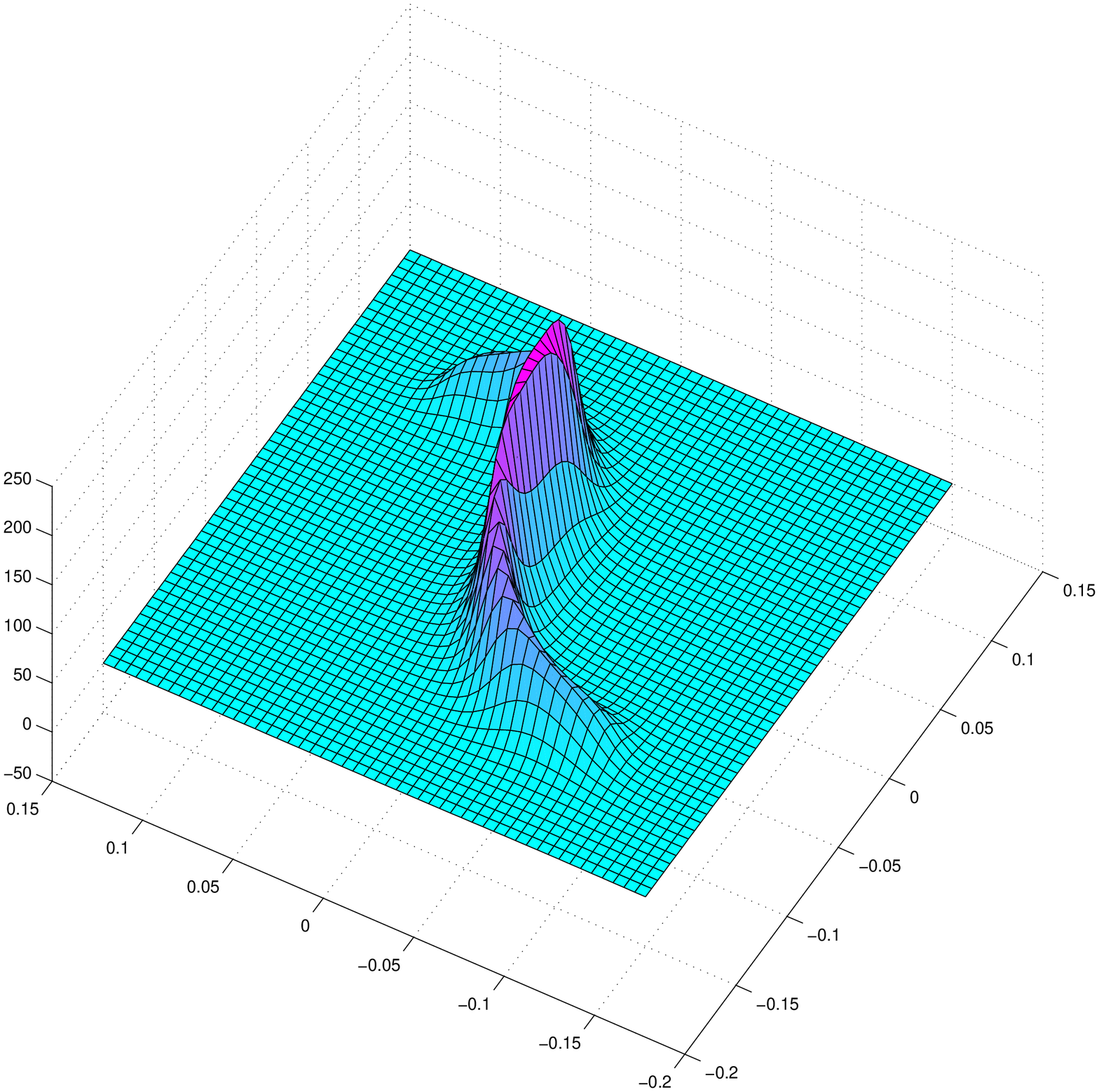}
\end{center}
\subcaption{Brown measure of $r(X_1,X_2)$ computed with Algorithm \ref{alg:Brown} for freely independent semicircular elements $X_1,X_2$.}
\label{fig:rational-Brown1}
\end{subfigure}
\qquad
\begin{subfigure}{0.47\textwidth}
\begin{center}
\includegraphics[height=6.2cm]{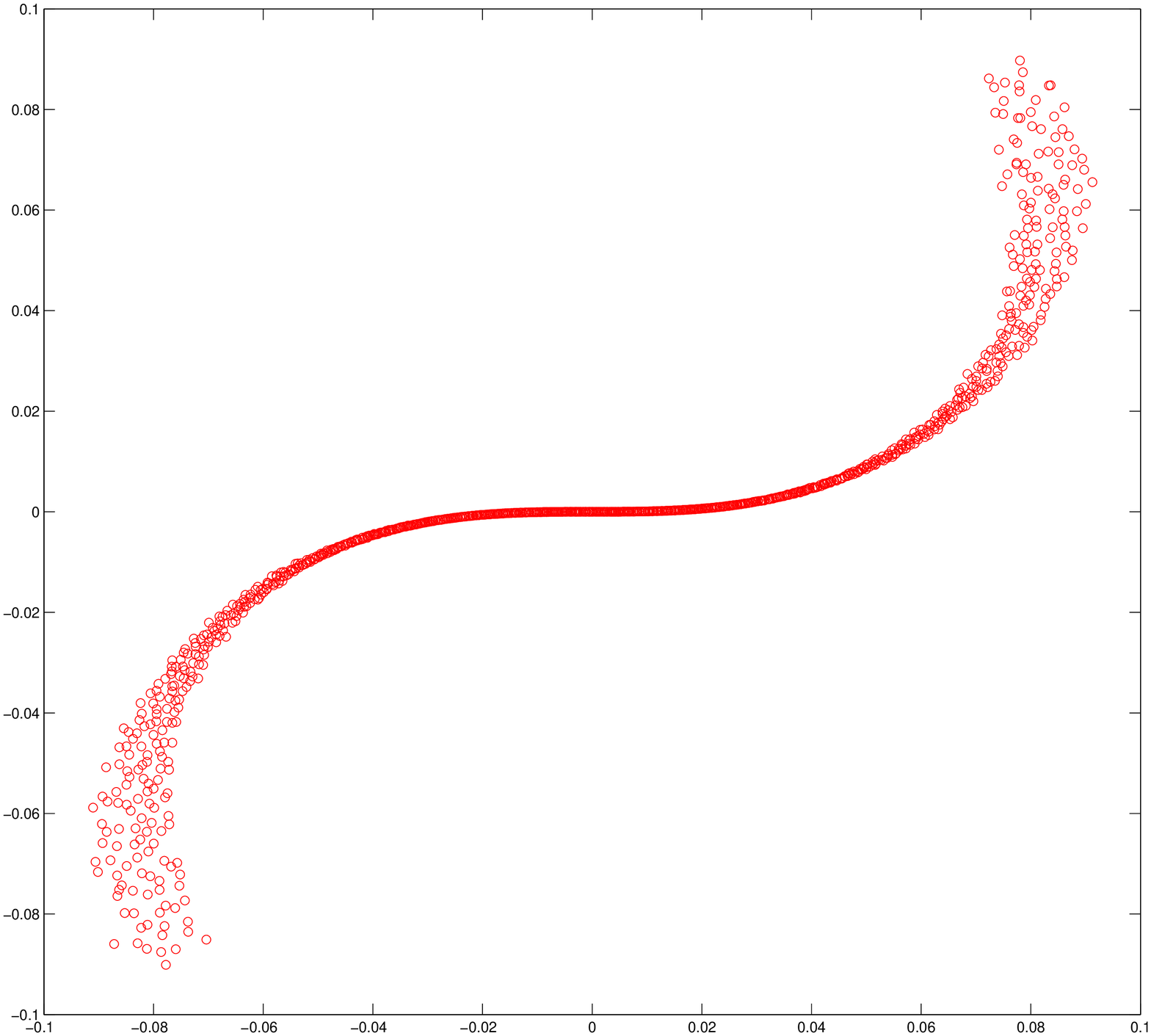}
\end{center}
\caption{Eigenvalues of $r(X_1^{(N)},X_2^{(N)})$ for independent Gaussian random matrices $X_1^{(N)},X_2^{(N)}$ of size $N=1000$.}
\label{fig:rational-Brown2}
\end{subfigure}
\caption{The result of Algorithm \ref{alg:Brown} compared with a random matrix simulation for the rational expression $r(x_1,x_2)$ defined in Example \ref{ex:rational-Brown}.}
\end{figure}

\begin{example}
\label{ex:rational-Brown}
We consider now the rational expression
$$r(x_1,x_2) := (4 - x_1)^{-1} x_2 \big(4 - x_1 - 4i x_2 (4 - x_1)^{-1} x_2\big)^{-1},$$
which is represented by the descriptor realization
$$\br(x_1,x_2) := \begin{pmatrix} 0 & \frac{1}{2} \end{pmatrix} \begin{pmatrix} 1-\frac{1}{4} x_1 & -ix_2\\ -\frac{1}{4}x_2 & 1-\frac{1}{4} x_1 \end{pmatrix}^{-1} \begin{pmatrix} \frac{1}{2}\\ 0\end{pmatrix}.$$
Using the construction that we presented in Lemma \ref{lem:block-realization}, we obtain a realization of
$$\br(x) = \begin{pmatrix} 0 & r(x)\\ r^\ast(x) & 0 \end{pmatrix}$$
by
$$\br(x_1,x_2) = \begin{pmatrix} 0 & 0 & 0 & \frac{1}{2}\\ \frac{1}{2} & 0 & 0 & 0\end{pmatrix} \begin{pmatrix} 0 & 0 & 1-\frac{1}{4}x_1 & -ix_2\\ 0 & 0 & -\frac{1}{4}x_2 & 1-\frac{1}{4}x_1\\ 1-\frac{1}{4}x_1 & -\frac{1}{4}x_2 & 0 & 0\\ ix_2 & 1-\frac{1}{4}x_1 & 0 & 0\end{pmatrix}^{-1} \begin{pmatrix} 0 & \frac{1}{2}\\ 0 & 0\\ 0 & 0\\ \frac{1}{2} & 0\end{pmatrix}.$$
According to Theorem \ref{thm:mainrep1}, we introduce now
$$\hpen(x_1,x_2) = \begin{pmatrix} 0 & 0 & 0 & 0 & 0 & \frac{1}{2}\\ 0 & 0 & \frac{1}{2} & 0 & 0 & 0\\ 0 & \frac{1}{2} & 0 & 0 & -1+\frac{1}{4}x_1 & ix_2\\ 0 & 0 & 0 & 0 & \frac{1}{4}x_2 & -1+\frac{1}{4}x_1\\ 0 & 0 & -1+\frac{1}{4}x_1 & \frac{1}{4}x_2 & 0 & 0\\ \frac{1}{2} & 0 & -ix_2 & -1+\frac{1}{4}x_1 & 0 & 0\end{pmatrix}.$$
Again, $\hpen(x_1,x_2)$ decomposes as $\hpen(x_1,x_2) = \hpen_0 + \hpen_1 x_1 + \hpen_2 x_2$, which provides the initial data for our algorithm: if $X_1,X_2$ are freely independent semicircular elements, then the obtained density of the Brown measure of $r(X_1,X_2)$ is shown in Figure \ref{fig:rational-Brown1}, whereas Figure \ref{fig:rational-Brown2} shows the eigenvalues of $r(X_1^{(N)},X_2^{(N)})$ for one realization of independent Gaussian random matrices $X_1^{(N)},X^{(N)}_2$ of size $N=1000$.
\end{example}


\newpage

\centerline{NOT FOR PUBLICATION}

\tableofcontents

\printindex


\begin{thebibliography}{XXXXX}

\bibitem[A66]{A66} S.A Amitsur, \emph{Rational identities and applications to algebra and geometry}, Journal of Algebra
Volume 3, Issue 3, May 1966, Pages 304--359.

\bibitem[And13]{And} G. W. Anderson, \textit{Convergence of the largest singular value of a polynomial in independent Wigner matrices}, Ann. Probab. {\bf 41} (2013), no. 3B, 2103--2181.

\bibitem[BMG05]{BMG05}
J. A. Ball, T. Malakorn, and G. Groenewald,
{\it Structured noncommutative multidimensional linear systems},
SIAM J. Control and Optimization,
44 (4), 1474--1528, 2005.

\bibitem[B01]{B01}  C. L.  Beck,
{\it On formal power series representations of uncertain systems},
IEEE TAC, {\textbf 46}, no. 2, 2001  p314-319.

\bibitem[BMS13]{BMS2013} S. Belinschi, T. Mai, and R. Speicher,
{\it Analytic subordination theory of operator-valued free additive convolution and the solution of a general random matrix problem},
{J. Reine Angew. Math.} (2013), doi:10.1515/crelle-2014-0138.

\bibitem[BPV12]{BPV2012} S. T. Belinschi, M. Popa, and V. Vinnikov,
{\it Infinite divisibility and a non-commutative Boolean-to-free Bercovici-Pata bijection},
J. Funct. Anal. \textbf{262}, no. 1, 2012, 94--123.

\bibitem[BSS15]{BSS2015}
S. T. Belinschi, P. Sniady, and R. Speicher,
{\it Eigenvalues of non-hermitian random matrices and Brown measure of non-normal operators: hermitian reduction and linearization method}, preprint 2015, arXiv:1506.02017.

\bibitem[BR84]{BR84}
J. Berstel and C. Reutenauer,
{\it Rational Series and their Languages}, 
EATCS Monographs on Theoretical Computer Science,
Springer, 1984. 

\bibitem[BCR98]{BCR98}
J. Bochnak,  M. Coste, and M. Roy,
{\it Real Algebraic Geometry}, pp 1-430, 1998.

\bibitem[Br86]{Brown1986}
L. G. Brown, {\it Lidskii's theorem in the type II case.}, {{in Geometric methods in operator algebras, Proc. US-Jap. Semin., Kyoto/Jap. 1983, Pitman Res. Notes Math. Ser. 123, 1-35 (1986)}}.

\bibitem[Co71]{Co71}
P. M. Cohn, {\it Free Rings and Their Relations},
Academic Press, London, 1971.

\bibitem[Co06]{Co06}
P. M. Cohn, {\it Free ideal rings and localization in general rings},
Cambridge University Press 2006, pp596.

\bibitem[CR99]{CR}
P. M. Cohn and C. Reutenauer, {\it On the construction of the free field}, International Journal of Algebra and Computation 9.03n04 (1999): 307-323.

\bibitem[F74a]{F74a}
M. Fliess,  {\it Matrices de Hankel}, J. Math Pure Appl., 53,
(1974) and 197-222 \& erratum 54 (1975).

\bibitem[F74b]{F74b}
M. Fliess,  {\it Sur Divers Produits Series Formalles}, Bull. Soc.
Math. France 102 1974, 184--191.

\bibitem[HL00]{HL}
U. Haagerup and F. Larsen
\emph{Brown's spectral distribution measure for {$R$}-diagonal
              elements in finite von {N}eumann algebras},
{J. Funct. Anal.} {\bf 176} (2000), 331--367.

\bibitem[HST06]{HST} U. Haagerup, H. Schultz, and S. Thorbj\o rnsen, \textit{A random matrix approach to the lack of projections in $C^\ast_{\operatorname{red}}(\mathbb{F}_2)$}, Adv. Math. 204 (2006), 1--83.

\bibitem[HT05]{HT} U. Haagerup and S. Thorbj\o rnsen, \textit{A new application of Random Matrices: $\operatorname{Ext}(C^\ast_{\operatorname{red}}(F_2))$ is not a group}, Annals of Mathematics 162 (2005).

\bibitem[HMV06]{HMV06} J. W. Helton, S.A. McCullough, and V. Vinnikov,
{\it Noncommutative Convexity Arises from Linear Matrix Inequalities}, J. Funct. Anal. 240 (2006), no. 1, 105--191.

\bibitem[HP00]{HP}
F. Hiai and D. Petz, \textit{The semicircle law, free random variables and entropy}, American Mathematical Society, 2000.

\bibitem[K63]{K63}
R. E. Kalman, {\it Mathematical Description of Linear Dynamical Systems},
SIAM J. of Control, Ser A vol 1 num 2, 1963, pp 152--192.

\bibitem[K56]{K56}
S. C. Kleene, \textit{Representation of events in nerve nets and finite automata}, 
In Automata Studies, pages 3--41. Princeton University Press, Princeton, NJ, USA,
1956.

\bibitem[KVV09]{KVV09} D. S. Kaliuzhnyi-Verbovetskyi and V. Vinnikov,
\textit{Singularities of rational functions and minimal factorizations: the noncommutative and the commutative setting}, Linear Algebra and its Applications {\bf 430} (2009), 869--889.

\bibitem[KVV12]{KVV12} D. S. Kaliuzhnyi-Verbovetskyi and V. Vinnikov,
{\it Noncommutative rational functions, their difference-differential calculus and realizations},
Multidimensional Syst. Signal Process. {\bf 23} (2012), no. 1-2, 49--77.

\bibitem[KVV12b]{KVV12b} D. S. Kaliuzhnyi-Verbovetskyi and V. Vinnikov,
\textit{Foundations of Free Noncommutative Function Theory},
Mathematical Surveys and Monographs 199. Providence, RI: American Mathematical Society, 2014
(arXiv:1212.6345v2, preprint 2012).

\bibitem[L03]{L03} P. A. Linnell,
  \textit{Noncommutative Localization in Group Rings}, in Noncommutative localization in algebra and topology, 
	London Math. Soc. Lecture Note Ser., 330, Cambridge Univ. Press, Cambridge, 2006.

\bibitem[Mal78]{Mal78}
P. Malcolmson, \textit{A prime matrix ideal yields a skew field}, J. London Math.
Soc. {\bf 18} (1978), 221--233.

\bibitem[MS17]{MS}
J.A. Mingo and R. Speicher, {\em Free Probability and Random Matrices},
Fields Institute Monographs 35, Springer, 2017.

\bibitem[NS06]{NS} A. Nica and R. Speicher, {\em Lectures on the combinatorics of free probability}, {Cambridge University Press}, {2006}.

\bibitem[JNPZ97]{JNPZ97} R. A. Janik, M. A. Nowak, G. Papp, and I. Zahed, \textit{Non-hermitian random matrix models}, {Nucl. Phys., B}, {\bf 501} (1997), no. 3, 603--642.

\bibitem[Ran95]{Ransford1995}
T. Ransford, {\it Potential theory in the complex plane}, Cambridge University Press, 1995.

\bibitem[RLL00]{RLL00}
M. Rordam, F. Larsen, and N. Laustsen, {\it An Introduction to K-theory for C*-algebras}, Cambridge University Press, 2000.

\bibitem[Row80]{Row80} L. H. Rowen,
{\it Polynomial Identities in Ring Theory},
Academic Press, 1980.

\bibitem[S61]{S61}
M. P. Sch\"{u}tzenberger,
  \textit{On the definition of a family of automata},
Information and Control 4 1961 245--270.

\bibitem[S65]{S65}
M. P. Sch\"{u}tzenberger, \textit{On finite monoids having only trivial subgroups},
Information and Control, 8:190--194, 1965.

\bibitem[Sthesis]{Sthesis} N. Slinglend,
\textit{NC Ball Maps and Change of Variables},
UCSD Phd Thesis in Mathematics 2010.

\bibitem[Spe98]{Speicher1998} R. Speicher,
\textit{Combinatorial theory of the free product with amalgamation and operator-valued free probability theory},
Mem. Am. Math. Soc., Vol. 627, 1998.

\bibitem[VDN92]{VDN} D. V. Voiculescu, K. Dykema, and A. Nica, {\em Free random variables}, CRM Monograph Series, Vol. 1, AMS, 1992.

\bibitem[Vol15]{Vol15} J. Volcic, \emph{Matrix coefficient realization theory of noncommutative rational functions}, preprint 2015, arXiv:1505.07472v1.

\bibitem[Vol16]{Vol16} J. Volcic, \emph{On domains of noncommutative rational functions}, preprint 2016, arXiv:1608.02594v2.

\bibitem[Y17]{Y17} S. Yin, \emph{Non-commutative rational functions in strongly convergent random variables}, {Adv. Oper. Theory} {\bf 3} (2018), no. 1, 190--204.

\end{thebibliography}
\end{document}